\newtheorem{theoremletters}{Theorem}
\newtheorem{corollaryletters}[theoremletters]{Corollary}
\newtheorem{propositionletters}[theoremletters]{Proposition}
\newtheorem{lemma}{Lemma}[section]
\newtheorem{theorem}[lemma]{Theorem}
\newtheorem{proposition}[lemma]{Proposition}
\newtheorem{corollary}[lemma]{Corollary}
\renewenvironment{proof}[1][\proofname]{{\sc #1. }}{\qed}
\theoremstyle{definition}
\newtheorem{remark}{Remark}
\newtheorem{example}{Example}
\newtheorem*{defin}{Definition}{\bf}{\rm}
\newtheorem*{fexam}{Final examples}{\bf}{\rm}
\newcommand{\abs}[1]{\ensuremath{\left| #1 \right|}}
\newcommand{\op}{\operatorname}
\newcommand{\ce}[2]{\operatorname{C}_{#1}(#2)}
\newcommand{\ze}[1]{\operatorname{Z}(#1)}
\newcommand{\fit}[1]{\operatorname{F}(#1)}
\newcommand{\rad}[2]{\op{O}_{#1}(#2)}
\newcommand{\syl}[2]{\op{Syl}_{#1}\left(#2\right)}
\newcommand{\hall}[2]{\op{Hall}_{#1}\left(#2\right)}
\begin{document}

\title{\bf Prime power indices in factorised groups}

\author{\sc M. J. Felipe $\cdot$ A. Mart\'inez-Pastor $\cdot$ V. M. Ortiz-Sotomayor
\thanks{The first author is supported by Proyecto Prometeo II/2015/011, Generalitat Valenciana (Spain), and the second author is supported by Proyecto MTM2014-54707-C3-1-P, Ministerio de Econom\'ia, Industria y Competitividad (Spain). The results in this paper are part of the third author's Ph.D. thesis, and he acknowledges the predoctoral grant ACIF/2016/170, Generalitat Valenciana (Spain). \newline
\rule{6cm}{0.1mm}\newline
Instituto Universitario de Matem\'atica Pura y Aplicada (IUMPA), Universitat Polit\`ecnica de Val\`encia, Camino de Vera, s/n, 46022, Valencia, Spain. \newline
\Letter: \texttt{mfelipe@mat.upv.es}, \texttt{anamarti@mat.upv.es}, \texttt{vicorso@doctor.upv.es}
}}

\date{}

\maketitle

\begin{abstract}
\noindent Let the group $G=AB$ be the product of the subgroups $A$ and $B$. We determine some structural properties of $G$ when the $p$-elements in $A\cup B$ have prime power indices in $G$, for some prime $p$. More generally, we also consider the case that all prime power order elements in $A\cup B$ have prime power indices in $G$. In particular, when $G=A=B$ we obtain as a consequence some known results. \\

\noindent \textbf{Keywords} Finite groups $\cdot$ Products of groups $\cdot$ Conjugacy classes $\cdot$ Sylow subgroups 

\smallskip

\noindent \textbf{2010 MSC} 20D10 $\cdot$ 20D40 $\cdot$ 20E45 $\cdot$ 20D20
\end{abstract}

%%%%%%%%%%%%%%%%%%%%%%%%%%%%%%%%%%%%%%%%%%%%%%%%%%%%%%%%%%%%%%%%%%%%%%%%%%%%%%%%%%%%%%%%%%%%%%%%

\section{Introduction} 

Along this paper all groups considered are finite. Throughout the last decades, the impact of conjugacy class sizes (also called indices) over the structure of finite groups has been highly investigated. Simultaneously, several authors have studied groups factorised as the product of two subgroups, in particular when those factors verify certain relations of permutability (see \cite{BEA} for a detailed account on this subject). In this setting a main problem is how to infer structural properties from the factors to the whole group. The purpose of this paper is to present new achievements in the study of finite groups which combine both current research lines. Although the literature in this context is sparse, a first approach can be found either in \cite{BCL}, \cite{FMOsurvey}, or \cite{LWW}, where square-free class sizes were analysed. In this line, our concrete goal here is to obtain some structural facts about a factorised group, provided that the indices of certain prime power order elements in the factors are also prime powers. 

One of the usual troubles in the framework of conjugacy classes is that, a priori, it is not guaranteed that the indices of the elements in a subgroup divide the corresponding indices in the whole group. Surprisingly, under our hypotheses, we have been able to prove that this happens for the considered elements in the factors of a factorised group (see Proposition \ref{inheritstructure}). We also highlight that, in our development, we do not use any permutability property between the factors, in contrast to what occurs in the three above cited papers.

The origin of our research can be located in the manuscript of Baer \cite{B}, where the main result characterises all finite groups such that every prime power order element has prime power index. In 1990, Chillag and Herzog (\cite{CH}) analysed groups all of whose conjugacy classes have prime power size. Later on, these studies were enhanced by Camina and Camina in \cite{CC}. Instead of imposing the prime power index condition on all (prime power order) elements, they restricted focus only to those elements with order a $p$-number for a fixed prime $p$. Next, in 2005, Berkovich and Kazarin (\cite{BK}) addressed also several problems about prime power indices in finite groups. In particular, in both papers \cite{BK} and \cite{CC}, two alternative shorter proofs of the aforementioned Baer's characterisation are provided.

We will use the following terminology: for a group $G$ and an element $x\in G$, we call $i_G(x)$ the \emph{index} of $x$ in $G$, that is, $i_G(x)=\abs{G:\ce{G}{x}}$ is the size of the conjugacy class $x^G$. For a natural number $n$, we denote by $\pi(n)$ the set of prime divisors of $n$. In particular, $\pi(G)$ is the set of prime divisors of the order of $G$. If $p$ is a prime, then the set of all Sylow $p$-subgroups of $G$ is represented by $\syl{p}{G}$, and $\hall{\pi}{G}$ denotes the set of all Hall $\pi$-subgroups of $G$ for a set of primes $\pi$. A group such that $G=\rad{\pi}{G}\times\rad{\pi'}{G}$ is said to be \emph{$\pi$-decomposable}. Given a group $G=AB$ which is the product of the subgroups $A$ and $B$, a subgroup $S$ is called \emph{prefactorised} (with respect to this factorisation) if $S=(S\cap A)(S\cap B)$ (see \cite{AMB}). The remainder notation and terminology is standard in this topic, and it is taken mainly from \cite{DH}. We also refer to this book for details about classes of groups.

According to the paper of Camina and Camina \cite{CC}, given a group $G$ and a prime $p\in \pi(G)$, we call $G$ a \emph{$p$-Baer group} if every $p$-element has prime power index (hereafter, the natural number 1 is a power of every prime). Moreover, if each prime power order element has prime power index, $G$ is called a \emph{Baer group}. Inspired by those definitions, we introduce the following concepts for factorised groups:

\begin{defin}
Let $G=AB$ be the product of the subgroups $A$ and $B$, and let $p\in\pi(G)$. We say that:
\begin{itemize}
	\item[$\bullet$] $G=AB$ is a \textbf{$\pmb{p}$-Baer factorisation} if $i_G(x)$ is a prime power for every $p$-element $x\in A\cup B$;
	
	\item[$\bullet$] $G=AB$ is a \textbf{Baer factorisation} if $i_G(x)$ is a prime power for all prime power order elements $x\in A\cup B$, i.e., if it is a $p$-Baer factorisation for all $p$.
\end{itemize}
\end{defin}

Clearly, any central product of two ($p$-)Baer groups provides a ($p$-)Baer factorisation. 

Our first outcome is to determine structural information of a finite group $G$ which has a $p$-Baer factorisation:

\newpage

\begin{theoremletters}
\label{teop-baer}
Let $G=AB$ be a $p$-Baer factorisation, and let $P\in\syl{p}{G}$. Then:

(1) $G/\ce{G}{\rad{p}{G}}$ is $p$-decomposable.

(2) Both $P\fit{G}$ and $P\rad{p'}{G}$ are normal in $G$. In particular, $G$ is $p$-soluble of $p$-length 1.
	
(3) The Sylow $p$-subgroup of $G/\fit{G}$ is abelian.
	
(4) $P$ is abelian if and only if $\rad{p}{G}$ so is.
	
(5) If $P=(P\cap A)(P\cap B)$ and $P\cap X\nleqslant \ce{G}{\rad{p}{G}}$ for some $X\in\{A, B\}$, then $P\cap X$ centralises every Hall $p'$-subgroup of $G$. 
	
(6) If the Sylow $p$-subgroups of $A$ and $B$ are non-abelian, then $G$ is $p$-decomposable.

\end{theoremletters}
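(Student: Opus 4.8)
The plan is to reduce the statement to the known structure theory of $p$-Baer groups applied to the factors, and then to transfer that information up to $G$ by means of the factorisation. By Proposition~\ref{inheritstructure} the index $i_X(x)$ divides $i_G(x)$ for every $p$-element $x$ of $X\in\{A,B\}$; since $i_G(x)$ is a prime power, each of $A$ and $B$ is itself a $p$-Baer group, so the structural results of Baer, Camina--Camina and Berkovich--Kazarin (\cite{B,CC,BK}) are available for the factors: $A$ and $B$ are $p$-soluble of $p$-length at most $1$, their Sylow $p$-subgroups are abelian modulo the respective Fitting subgroups, they are $p$-decomposable whenever their Sylow $p$-subgroups are non-abelian, and the quotient of each by the centraliser of its $p$-radical is $p$-decomposable. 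The recurring technical device is the following local remark: if $x\in A\cup B$ is a $p$-element whose index $i_G(x)$ is not a power of $p$, then it is a power of some prime $q\neq p$, so $\ce{G}{x}$ contains a full Sylow $p$-subgroup $P_x$ of $G$; as $x$ is a $p$-element centralising $P_x$ we get $x\in\ze{P_x}$, and since $\rad{p}{G}\leqslant P_x$ it follows that $x\in C:=\ce{G}{\rad{p}{G}}$. Hence every $p$-element of $A\cup B$ outside $C$ has index a power of $p$ and therefore centralises some Hall $p'$-subgroup of $G$.

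For (2), the content is that $\overline{G}:=G/\rad{p'}{G}$ has a normal Sylow $p$-subgroup; granting this, $P\rad{p'}{G}\trianglelefteq G$, and since $\fit{G}=\rad{p}{G}\times\rad{p'}{G}$ with $\rad{p}{G}\leqslant P$, also $P\fit{G}=P\rad{p'}{G}\trianglelefteq G$, whence $p$-solubility with $p$-length $1$ is immediate. To see that $\overline{G}$ is $p$-closed I would combine the $p$-Baer structure of the factors, the local remark, and prefactorisation arguments in the spirit of \cite{AMB,BEA} --- this is where the product structure must genuinely be used, since products of $p$-soluble groups need not be $p$-soluble. For (1), since $[\rad{p}{G},\rad{p'}{G}]=1$ we have $\rad{p'}{G}\leqslant C$, so $G/C$ is a quotient of $\overline{G}$ and hence $p$-closed by (2); combining this with the local remark (the images in $G/C$ of the $p$-elements of $A\cup B$ outside $C$ centralise Hall $p'$-subgroups of $G/C$) and with the $p$-decomposability of the factors modulo the centralisers of their $p$-radicals, one deduces that $G/C$ is $p$-decomposable. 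Part (3) is of the same flavour: $P\cap\fit{G}=\rad{p}{G}$, so $P\fit{G}/\fit{G}\cong P/\rad{p}{G}$ is a Sylow $p$-subgroup of $G/\fit{G}$, and its abelianity follows from (2) and the corresponding property of the factors.

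For (4) the forward implication is trivial as $\rad{p}{G}\leqslant P$. If $\rad{p}{G}$ is abelian, then (3) gives $P'\leqslant\rad{p}{G}$, and using (1) and the local remark I would force $P\leqslant C$, so that $\rad{p}{G}\leqslant\ze{P}$ and $P$ has nilpotency class at most $2$; the factors' property ``Sylow $p$-subgroup abelian $\iff$ $p$-radical abelian'' then yields $P'=1$. For (5), assuming $P=(P\cap A)(P\cap B)$ one first notes that $P\cap X$ is a Sylow $p$-subgroup of the $p$-Baer group $X$; if $P\cap X\nleqslant C$, then by the local remark every element of $P\cap X$ outside $C$ has $p$-power index in $G$, and exploiting the structure of $X$ together with the conjugacy of Hall $p'$-subgroups one upgrades this to the conclusion that $P\cap X$ centralises every Hall $p'$-subgroup of $G$. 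Finally, for (6), if the Sylow $p$-subgroups of $A$ and $B$ are non-abelian, then $A$ and $B$ are $p$-decomposable, in particular with normal $p$-complements $\rad{p'}{A}$ and $\rad{p'}{B}$; combining this with $P\fit{G}\trianglelefteq G$ from (2) and a product-of-groups argument on these $p$-complements, $G$ itself is $p$-decomposable.

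The main obstacle is that the hypothesis constrains only the $p$-elements inside the factors, not all $p$-elements of $G$, so the $p$-Baer structure theory cannot be invoked for $G$ directly; the heart of the proof is to bridge this gap, carrying the information from $A$ and $B$ up to $G$ by orchestrating prefactorisation lemmas with the dichotomy ``index a power of $p$ versus centralises $\rad{p}{G}$''. A secondary delicate point is that, in general, a Sylow $p$-subgroup of $G$ need not be prefactorised, which is exactly why parts (5) and (6) carry additional hypotheses.
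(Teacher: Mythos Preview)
Your entire strategy rests on a step that is not available: you invoke Proposition~\ref{inheritstructure} to conclude that $A$ and $B$ are themselves $p$-Baer groups, but that proposition is stated and proved only for \emph{Baer} factorisations (the hypothesis for all primes simultaneously), not for $p$-Baer ones. The paper explicitly exhibits a $2$-Baer factorisation $G=HK$ with a $2$-element $x\in H$ such that $i_G(x)=7$ while $i_H(x)=3$ (Final examples~(2)), so the divisibility $i_X(x)\mid i_G(x)$ you assert is false in this setting; and the text states outright that whether the factors of a $p$-Baer factorisation are $p$-Baer groups is an open question. Even ignoring this, the proof of Proposition~\ref{inheritstructure} uses Corollary~\ref{teoNA}, which in turn rests on Theorem~\ref{teop-baer}, so your argument would be circular. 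A smaller but related slip: in your treatment of (5), the hypothesis $P=(P\cap A)(P\cap B)$ does not force $P\cap X\in\syl{p}{X}$ (see the Remark after Lemma~\ref{prefact_sylow}), so you cannot immediately view $P\cap X$ as a Sylow $p$-subgroup of a $p$-Baer group $X$.

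The paper's route is quite different and never descends to the factors. For (2), the decisive input is Proposition~\ref{CaminaCamina} (Camina--Camina): every element of prime power index in $G$ lies in $\op{F}_2(G)$. Applying this directly to the elements of $(P\cap A)\cup(P\cap B)$ for a prefactorised $P$ gives $P\leqslant\op{F}_2(G)$, so $P\fit{G}/\fit{G}=\rad{p}{G/\fit{G}}$ and $P\fit{G}\trianglelefteq G$; normality of $P\rad{p'}{G}$ then follows by induction. For (1), your ``local remark'' is correct and is essentially all that is needed: any $p$-element of $A\cup B$ with non-trivial image in $G/\ce{G}{\rad{p}{G}}$ has $p$-power index, so Lemma~\ref{lemma_p-decompo(1)} applies to that quotient. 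Parts (3)--(6) are proved in Proposition~\ref{lemmaPXabelian} by direct arguments organised around the dichotomy $P\cap X\leqslant\fit{G}$ versus $P\cap X\nleqslant\fit{G}$, using the $p$-solubility from (2) and the $p$-decomposability of $G/\ce{G}{\rad{p}{G}}$ from (1); no structure theory of $A$ or $B$ as $p$-Baer groups enters.
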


Besides, we get additional information based on the primes appearing as indices of the $p$-elements in the factors of a $p$-Baer factorisation:

\begin{theoremletters}
\label{teop-baerprimes}
Let $G=AB$ be a $p$-Baer factorisation, and let $P\in\syl{p}{G}$. Then there exist unique primes $q$ and $r$ such that $i_G(x)$ is a $q$-number for every $p$-element $x\in A$, and $i_G(y)$ is an $r$-number for every $p$-element $y\in B$, respectively. (Eventually $p\in\{q, r\}$ or $q=r$.) 

\noindent Moreover, $P\leqslant\ce{G}{\rad{\{q, r\}'}{\fit{G}}}$, and $P\rad{q}{G}\rad{r}{G}$ is normal in $G$. Further:

(1) If $q=r=p$, then $G$ is $p$-decomposable.
		
(2) If $p\notin \{q, r\}$, then $P$ is abelian.

\end{theoremletters}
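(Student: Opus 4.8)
The plan is to prove the six assertions essentially in the order stated, leaning on Theorem~\ref{teop-baer} throughout.

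\emph{Existence and uniqueness of $q$ and $r$.} Fix $P_A\in\syl{p}{A}$. Every $p$-element of $A$ is $A$-conjugate into $P_A$, and $G$-indices are invariant under $A$-conjugation, so it suffices to understand the indices of the elements of the $p$-group $P_A$. If $u,v\in P_A$, then $uv\in P_A$ is again a $p$-element of $A$, so $i_G(u)$, $i_G(v)$, $i_G(uv)$ are all prime powers by hypothesis; since $\ce{G}{\langle u,v\rangle}$ coincides with each of $\ce{G}{u}\cap\ce{G}{v}$, $\ce{G}{u}\cap\ce{G}{uv}$, $\ce{G}{v}\cap\ce{G}{uv}$, its index divides $\gcd\bigl(i_G(u)i_G(v),\,i_G(u)i_G(uv),\,i_G(v)i_G(uv)\bigr)$. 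The usual elementary arithmetic (as in \cite{CC}, \cite{B}) then forces a single prime $q$ to occur among the indices of the non-central $p$-elements of $P_A$; if all $p$-elements of $A$ are central in $G$ I would simply set $q:=p$ (in which case, as the prefactorised Sylow below will show, $P\leqslant\ze{G}$ and $G$ is $p$-decomposable). Repeating the argument with $B$ gives $r$, and uniqueness is then immediate. Nothing here constrains the mutual positions of $p$, $q$, $r$, which is why the parenthetical possibilities are left open.

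\emph{The centraliser statement.} Since $G$ is $p$-soluble by Theorem~\ref{teop-baer}(2), one may choose $P\in\syl{p}{G}$ with $P=(P\cap A)(P\cap B)$ and $P\cap A\in\syl{p}{A}$, $P\cap B\in\syl{p}{B}$; because $\ce{G}{\rad{\{q, r\}'}{\fit{G}}}$, $\rad{q}{G}$ and $\rad{r}{G}$ are normal and all Sylow $p$-subgroups are conjugate, it is enough to argue for this particular $P$. Every element of $P$ is a product $ab$ with $a$ a $p$-element of $A$ and $b$ a $p$-element of $B$, so $i_G(ab)\mid i_G(a)i_G(b)$ is a $\{q,r\}$-number; by Sylow conjugacy, $i_G(z)$ is then a $\{q,r\}$-number for \emph{every} $p$-element $z$ of $G$. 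As $\rad{\{q, r\}'}{\fit{G}}$ is a $\{q,r\}'$-group normalised by each such $z$, the index of its $z$-centraliser in it is at once a $\{q,r\}$- and a $\{q,r\}'$-number, hence $1$; thus $P$ centralises $\rad{\{q, r\}'}{\fit{G}}$.

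\emph{Normality of $P\rad{q}{G}\rad{r}{G}$.} I would start from $P\rad{p'}{G}\trianglelefteq G$ (Theorem~\ref{teop-baer}(2)); a Frattini argument applied to the Sylow $p$-subgroup $P$ of $P\rad{p'}{G}$ reduces the claim to $[P,\rad{p'}{G}]\leqslant\rad{q}{G}\rad{r}{G}$, i.e.\ to $P$ acting trivially on $\rad{p'}{G}/\rad{q}{G}\rad{r}{G}$. Three ingredients would enter: (i) a $p$-element of $A$ (resp.\ $B$), having index a $q$-power (resp.\ $r$-power), centralises $\rad{t}{G}$ for every $t\neq q$ (resp.\ $t\neq r$) and, by the coprimality argument of the previous paragraph, also centralises $\rad{\{p,q,r\}'}{G}$ — so $P$ centralises $\rad{t}{G}$ for all $t\notin\{q,r\}$ and $\rad{\{p,q,r\}'}{G}$; (ii) $[P,\rad{p'}{G}]\leqslant\ce{G}{\rad{p}{G}}$, since $G/\ce{G}{\rad{p}{G}}$ is $p$-decomposable by Theorem~\ref{teop-baer}(1); (iii) an induction on $|G|$, passing to $G/\rad{q}{G}\rad{r}{G}$. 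I expect ingredient (iii) — handling the case where $\rad{p'}{G}$ fails to be nilpotent, and clarifying how $\rad{q}{G}\rad{r}{G}$ sits inside $\rad{\{q,r\}}{G}$ — to be the main technical obstacle of the whole proof.

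\emph{Assertions (1) and (2).} If $q=r=p$, then by the above every $p$-element of $G$ has $p$-power index; for $x\in\ze{P}$ this means $\ce{G}{x}$ contains both $P$ and a Hall $p'$-subgroup, whence $\ce{G}{x}=G$ and $\ze{P}\leqslant\ze{G}$. Passing to $G/\ze{P}$ preserves the hypothesis (being a central $p$-subgroup, $\ze{P}$ lets $p$-elements lift to $p$-elements with no larger index), so an induction on $|G|$ together with the standard fact that a group possessing a central $p$-subgroup with $p$-decomposable quotient is itself $p$-decomposable yields (1). If $p\notin\{q,r\}$, then every $p$-element of $A$ and of $B$ has index coprime to $p$, hence is centralised by some Sylow $p$-subgroup of $G$ and in particular by $\rad{p}{G}$; with the prefactorised $P$ this gives $P\leqslant\ce{G}{\rad{p}{G}}$, and since $\rad{p}{G}\leqslant P$ we deduce $\rad{p}{G}\leqslant\ce{G}{\rad{p}{G}}$, i.e.\ $\rad{p}{G}$ is abelian; Theorem~\ref{teop-baer}(4) then gives that $P$ is abelian.
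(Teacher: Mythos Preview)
Your argument for the existence of $q$ and $r$ via the divisibility of $|G:\ce{G}{\langle u,v\rangle}|$ is correct and is actually more elementary than the paper's route, which invokes the Berkovich--Kazarin lemma (Lemma~\ref{lemmaBK}) together with Lemma~\ref{theoremPnotCent}. Your treatments of the centraliser statement and of assertions (1) and (2) are also fine and close in spirit to the paper's (the paper simply quotes Lemma~\ref{lemma_p-decompo(1)} for (1) and Proposition~\ref{lemmaPXabelian}(d),(e) for (2), but your direct arguments work).

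The real issue is the normality of $P\rad{q}{G}\rad{r}{G}$. You start from $P\rad{p'}{G}\trianglelefteq G$ and then try to show $[P,\rad{p'}{G}]\leqslant\rad{q}{G}\rad{r}{G}$; as you yourself note, this runs into trouble because $\rad{p'}{G}$ need not be nilpotent, and the induction you propose does not close up (the preimages of $\rad{q}{\overline G}$, $\rad{r}{\overline G}$ are only contained in $\rad{\{q,r\}}{G}$, not in $\rad{q}{G}\rad{r}{G}$). The paper avoids all of this by using the \emph{other} half of Theorem~\ref{teop-baer}(2), namely $P\fit{G}\trianglelefteq G$. A Frattini argument then gives $G=\no{G}{P}\fit{G}$; writing $f\in\fit{G}$ as $f_1f_2$ with $f_1\in\rad{q}{G}\rad{r}{G}$ and $f_2\in\rad{\{q,r\}'}{\fit{G}}$ (possible because $\fit{G}$ is nilpotent), your centraliser statement gives $[P,f_2]=1$, whence $P^{f}=P^{f_1}\leqslant P\rad{q}{G}\rad{r}{G}$. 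This is the entire argument. So the obstacle you anticipated disappears once you trade $\rad{p'}{G}$ for $\fit{G}$, a tool you already had in hand.
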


In the particular case when $G=A=B$ in the above result and Theorem \ref{teop-baer} (2), we partially recover \cite[Theorem A]{CC} due to Camina and Camina (see Section \ref{sec_p}, Corollary \ref{theoCC}). 

Afterwards, we impose the prime power index condition on all prime power order elements in the factors (that is, we consider groups with a Baer factorisation). We start proving the main theorem of Baer's paper \cite{B} from our results on $p$-Baer factorisations when $G=A=B$ (see Theorem \ref{teoBAER}).

Then our first result for a non-trivial Baer factorisation is the next consequence of Theorem \ref{teop-baer}:

\begin{corollaryletters}
\label{teoNA}
If $G=AB$ is a Baer factorisation, then:

(1) $G/\fit{G}$ is abelian.
	
(2) $G$ has abelian Sylow subgroups (that is, $G$ is an A-group) if and only if $\fit{G}$ is abelian.
	
(3) Set $\sigma:=\{ p \in \pi(G) \: \mid \: A_p\in\syl{p}{A} \text{ and } B_p\in\syl{p}{B}\text{ are non-abelian}\}$. Then $G = \rad{\sigma}{G}\times\rad{\sigma'}{G}$ with $\rad{\sigma}{G}$ nilpotent. 
	
(4) If all Sylow subgroups of $A$ and $B$ are non-abelian, then $G$ is nilpotent.

\end{corollaryletters}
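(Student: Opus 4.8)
The plan is to assemble the four assertions from the corresponding items of Theorem~\ref{teop-baer}, applied to every prime $p\in\pi(G)$ --- legitimate, since a Baer factorisation is a $p$-Baer factorisation for all $p$ --- together with the elementary fact that $\fit{G}=\prod_{p}\rad{p}{G}$ and that $\rad{p}{G}$ is contained in a Sylow $p$-subgroup of $G$.

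For (1), fix $P\in\syl{p}{G}$. By Theorem~\ref{teop-baer}(2), $P\fit{G}\trianglelefteq G$, so $P\fit{G}/\fit{G}$ is a \emph{normal} Sylow $p$-subgroup of $G/\fit{G}$. As this holds for every prime, $G/\fit{G}$ is the (internal) direct product of its Sylow subgroups, hence nilpotent; and by Theorem~\ref{teop-baer}(3) each of those Sylow subgroups is abelian. A nilpotent group with abelian Sylow subgroups is abelian, so $G/\fit{G}$ is abelian.

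For (2): if $G$ is an A-group, then every $P\in\syl{p}{G}$ is abelian, so each $\rad{p}{G}\leqslant P$ is abelian and hence $\fit{G}=\prod_p\rad{p}{G}$ is abelian; conversely, if $\fit{G}$ is abelian then each $\rad{p}{G}$ is abelian, and Theorem~\ref{teop-baer}(4) forces the corresponding $P\in\syl{p}{G}$ to be abelian, so $G$ is an A-group. For (3), each $p\in\sigma$ satisfies the hypothesis of Theorem~\ref{teop-baer}(6), giving $G=\rad{p}{G}\times\rad{p'}{G}$; it then remains to invoke the routine fact that a finite group which is $p$-decomposable for every $p$ in a set $\sigma$ of primes satisfies $G=\rad{\sigma}{G}\times\rad{\sigma'}{G}$ with $\rad{\sigma}{G}=\prod_{p\in\sigma}\rad{p}{G}$ a nilpotent (normal Hall) subgroup --- proved prime by prime, e.g.\ by induction on $|\sigma|$, peeling off one normal Sylow factor at a time and identifying $\rad{\sigma'}{G}=\bigcap_{p\in\sigma}\rad{p'}{G}$ by an order count. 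Finally, (4) is the case $\sigma=\pi(G)$ of (3): the hypothesis that all Sylow subgroups of $A$ and of $B$ are non-abelian forces $\pi(A)=\pi(B)=\pi(G)$ and puts every $p\in\pi(G)$ into $\sigma$, so $G=\rad{\pi(G)}{G}$ is nilpotent.

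Because the substantive work is already contained in Theorem~\ref{teop-baer}, no serious obstacle arises here; the only mildly delicate points are the passage from ``$p$-decomposable for all $p\in\sigma$'' to ``$\sigma$-decomposable with nilpotent $\sigma$-radical'' in (3), and confirming in (4) that the stated hypothesis genuinely forces $\sigma$ to equal $\pi(G)$.
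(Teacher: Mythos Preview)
Your proof is correct and follows essentially the same approach as the paper, which simply states that (1) and (2) follow from Theorem~\ref{teop-baer}(3) and (4) respectively, and (3)--(4) from Theorem~\ref{teop-baer}(6). You have merely spelled out the routine details the paper leaves implicit (the passage from ``abelian normal Sylow $p$-subgroup of $G/\fit{G}$ for every $p$'' to ``$G/\fit{G}$ abelian'', and the assembly of $p$-decomposability for each $p\in\sigma$ into $\sigma$-decomposability with nilpotent $\sigma$-part).
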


It is worthwhile to wonder whether the factors of a Baer factorisation are Baer groups. We have obtained that the answer is positive, in relation to the above comments on the divisibility of the indices:

\begin{propositionletters}
\label{inheritstructure}
Let $G=AB$ be a Baer factorisation. Let $x\in X$ be a prime power order element, where $X\in \{A, B\}$. If $i_G(x)$ is a $q$-number for some prime $q$, then $i_X(x)$ is also a $q$-number. In particular, it follows that $A$ and $B$ are Baer groups.
\end{propositionletters}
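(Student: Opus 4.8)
Let $p$ be the prime such that $x$ is a $p$-element, and assume without loss of generality that $x \in A$; then $G = AB$ is in particular a $p$-Baer factorisation, and Theorem~\ref{teop-baerprimes} provides the unique prime $q$ with the property that $i_G(y)$ is a $q$-number for every $p$-element $y \in A$. In particular $i_G(x) = q^b$, and we may assume $b \geqslant 1$ (otherwise $x \in \ze{G}$ and there is nothing to prove); the goal is to show that $\ce{A}{x}$ contains a Hall $q'$-subgroup of $A$, which is equivalent to $i_A(x)$ being a $q$-number. I would first record the structural input available from Theorems~\ref{teop-baer} and~\ref{teop-baerprimes} and Corollary~\ref{teoNA}: $G$ is soluble with $G/\fit{G}$ abelian, the subgroups $P\fit{G}$, $P\rad{p'}{G}$ and $P\rad{q}{G}\rad{r}{G}$ are normal in $G$ (here $P \in \syl{p}{G}$ and $r$ is the prime attached to $B$), and $P \leqslant \ce{G}{\rad{\{q,r\}'}{\fit{G}}}$. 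Since $\rad{t}{\fit{G}} = \rad{t}{G}$ for every prime $t$ and the last centraliser is normal in $G$, it follows that $x$ centralises $\rad{t}{G}$ for each prime $t \notin \{q,r\}$; hence $N := \rad{\{p,q,r\}'}{G}$ is a normal $p'$-subgroup of $G$ contained in $\ce{G}{x}$.

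The next step is an induction on $\abs{G}$ in which we pass to $G/N$. Because $N \leqslant \ce{G}{x}$ we have $\ce{S}{x}\cap N = S\cap N$ for every $S \leqslant G$, and a routine index computation then gives $\abs{S : \ce{S}{x}} = \abs{SN/N : \ce{S}{x}N/N}$. Moreover, if $aN$ centralises $xN$ for some $a \in G$, then $[a,x] \in N$ and, since $N \leqslant \ce{G}{x}$, $[a,x]$ commutes with $x$; iterating $a^{x^{k}} = a\,[a,x]^{k}$ and using that $x$ has $p$-power order while $N$ is a $p'$-group forces $[a,x] = 1$, so that $\ce{S}{x}N/N = \ce{SN/N}{xN}$. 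Taking $S \in \{A, G\}$ we obtain $i_A(x) = i_{AN/N}(xN)$ and $i_{G/N}(xN) = i_G(x) = q^b$. As $G/N = (AN/N)(BN/N)$ is again a Baer factorisation, of strictly smaller order when $N \neq 1$, and a $q$-number is in particular a prime power, the inductive hypothesis applied to $xN$ yields that $i_{AN/N}(xN)$, and hence $i_A(x)$, is a $q$-number. We are thus reduced to the case $N = \rad{\{p,q,r\}'}{G} = 1$, i.e.\ $\fit{G}$ is a $\{p,q,r\}$-group.

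In this base case the structure of $G$ is very restricted ($\fit{G}$ is a $\{p,q,r\}$-group and $G/\fit{G}$ is abelian), and the claim becomes equivalent to the transfer statement that \emph{$x$ centralises some Sylow $s$-subgroup of $A$ for every prime $s \neq q$} (recall that $q$ may divide $i_A(x)$). To establish this I would use that a factorised soluble group admits, for any set of primes, a prefactorised Hall subgroup in the sense of \cite{AMB}: one looks for a prefactorised Hall $q'$-subgroup $H = (H\cap A)(H\cap B)$ of $G$ with $H \leqslant \ce{G}{x}$, for then $H\cap A$ is a Hall $q'$-subgroup of $A$ lying in $\ce{A}{x}$ and $i_A(x)$ divides $\abs{A : H\cap A}$, a $q$-number. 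Producing such an $H$ is where I expect the main obstacle: the hypothesis $i_G(x) = q^b$ only guarantees that \emph{some} Hall $q'$-subgroup of $G$ lies in $\ce{G}{x}$, not a prefactorised one, and upgrading this requires the normality of $P\fit{G}$ and $P\rad{q}{G}\rad{r}{G}$ and the inclusion $P \leqslant \ce{G}{\rad{\{q,r\}'}{\fit{G}}}$, combined with the information from Theorem~\ref{teop-baerprimes} (which, for example, makes $G$—and then $A$—$p$-decomposable when $q = r = p$, so that $i_A(x)$ is automatically a $p$-number, and makes $P$ abelian when $p \notin \{q,r\}$, so that the prime $p$ is dispatched at once). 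This is precisely the phenomenon underlined in the Introduction, namely that $i_A(x)$ need not divide $i_G(x)$ in general, which is why the factorisation is indispensable here.

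Finally, the last assertion follows at once: for any prime power order $x \in A$, the Baer factorisation hypothesis makes $i_G(x)$ a prime power, say a $q$-number, and then the first part gives $i_A(x)$ a $q$-number, hence a prime power; so every prime power order element of $A$ has prime power index in $A$, that is, $A$ is a Baer group, and symmetrically so is $B$.
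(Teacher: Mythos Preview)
Your reduction step is sound: the argument that $N := \rad{\{p,q,r\}'}{G}$ lies in $\ce{G}{x}$ and that centralisers descend correctly to $G/N$ (because $N$ is a $p'$-group centralising the $p$-element $x$) is correct. The difficulty is that after this reduction you stop. Once $\fit{G}$ is a $\{p,q,r\}$-group you still need to produce a Hall $q'$-subgroup of $A$ inside $\ce{A}{x}$, and you explicitly say ``Producing such an $H$ is where I expect the main obstacle'' and then list ingredients without carrying out an argument. The special cases you mention do not close the gap either: the case $q=r=p$ is fine (there $G$, hence $A$, is $p$-decomposable), but ``$P$ abelian'' when $p\notin\{q,r\}$ only tells you that a Sylow $p$-subgroup of $A$ centralises $x$, not a Hall $q'$-subgroup; and the mixed cases $p=q\neq r$ or $p\neq q=r$ are not addressed at all. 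As it stands the base case is missing, so the proof is incomplete.

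The paper organises the induction differently, and this is what makes the base case tractable. First it separates off $q=p$ directly, using Lemma~\ref{theoremPnotCent} to show that $\abs{G:\ce{G}{P\cap X}}$ is a $p$-power and then observing that $\abs{X:\ce{X}{P\cap X}}$ divides it. For $q\neq p$ it reduces modulo $\rad{q}{G}$ (not modulo $\rad{\{p,q,r\}'}{G}$): since the kernel is now a $q$-group while $x$ is a $p$-element, one again has $\ce{\overline{X}}{\overline{x}}=\overline{\ce{X}{x}}$, and the extra factor $\abs{(X\cap\rad{q}{G}):(\ce{X}{x}\cap\rad{q}{G})}$ is harmless because it is a $q$-number. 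This brings one to $\rad{q}{G}=1$, i.e.\ $\fit{G}=\rad{q'}{\fit{G}}$. The decisive second reduction, absent from your sketch, is to pass to $M:=X\fit{G}$, which is normal since $G/\fit{G}$ is abelian and carries the induced factorisation $M=X(M\cap Y)$. Induction then forces $G=X\rad{q'}{\fit{G}}$; in that extremely special situation an easy index computation shows that \emph{every} Hall $q'$-subgroup $G_{q'}$ of $G$ satisfies $X\cap G_{q'}\in\hall{q'}{X}$, so the Hall $q'$-subgroup of $G$ that centralises $x$ automatically yields one in $X$. Your prefactorised-Hall idea is aiming at the same conclusion, but without the reduction to $G=X\rad{q'}{\fit{G}}$ there is no reason such a prefactorised Hall subgroup should lie in $\ce{G}{x}$.
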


Consequently, the structure of $A$ and $B$ in a Baer factorisation $G=AB$ is well-known. Nevertheless, we cannot expect to get an analogous characterisation as Baer's one for Baer factorisations, even for direct products $G=A\times B$ (see Example \ref{counterBaer} (i)).

At best, some arithmetical and structural information about Baer factorisations arises locally, i.e., prime by prime:

\begin{theoremletters}
\label{teoallp}
Let $G=AB$ be a Baer factorisation. For a prime $p$, and given $P\in\syl{p}{G}$:

(1) If $P$ is not abelian, then $\abs{G:\ce{G}{P}}$ is a $\{p, q\}$-number, for a prime $q$. (Eventually, $p=q$.)
	
(2) If $P$ is abelian, then $\abs{G:\ce{G}{P}}$ is a $\{q, r\}$-number, for some primes $q$ and $r$, both distinct from $p$. (Eventually $q=r$.)

Further, $G/\ce{G}{\rad{p}{G}}$ is $p$-decomposable with abelian $p$-complement, and the $p$-complement has order divisible by at most two primes.
\end{theoremletters}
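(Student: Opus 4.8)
The plan is the following. Fix the prime $p$. Since $G=AB$ is a Baer factorisation it is, in particular, a $p$-Baer factorisation, so Theorems~\ref{teop-baer} and \ref{teop-baerprimes} apply to $G$ at the prime $p$; moreover, by Corollary~\ref{teoNA}(1) the quotient $G/\fit{G}$ is abelian, whence $G$ is soluble and $G'\leqslant\fit{G}$. Let $q$ and $r$ be the primes supplied by Theorem~\ref{teop-baerprimes}, so that $i_G(x)$ is a $q$-number for every $p$-element $x\in A$ and $i_G(y)$ is an $r$-number for every $p$-element $y\in B$, and recall that $P\leqslant\ce{G}{\rad{\{q,r\}'}{\fit{G}}}$, that $P\rad{q}{G}\rad{r}{G}\trianglelefteq G$, and that $P$ is abelian whenever $p\notin\{q,r\}$. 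It is well known in the theory of factorised groups that one may choose a Sylow $p$-subgroup of $G$ of the form $P=A_pB_p$ with $A_p\in\syl{p}{A}$ and $B_p\in\syl{p}{B}$; since all conclusions are invariant under conjugation in $G$, I would assume $P$ has this shape, so that $\ce{G}{P}=\ce{G}{A_p}\cap\ce{G}{B_p}$.

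The heart of the argument is to prove that $\abs{G:\ce{G}{A_p}}$ is a $q$-number and, symmetrically, that $\abs{G:\ce{G}{B_p}}$ is an $r$-number; granting this, $\abs{G:\ce{G}{P}}$ divides $\abs{G:\ce{G}{A_p}}\,\abs{G:\ce{G}{B_p}}$, hence it is a $\{q,r\}$-number. To obtain the claim about $A_p$ I would use that, by Proposition~\ref{inheritstructure}, $A$ is itself a Baer group for which $i_A(x)$ is a $q$-number for each $p$-element $x\in A$, so Theorems~\ref{teop-baer} and \ref{teop-baerprimes} apply internally to $A$, giving $A_p\rad{q}{A}\trianglelefteq A$ (and $A_p$ abelian if $q\ne p$). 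One then splits into two cases. If $A_p\nleqslant\ce{G}{\rad{p}{G}}$, Theorem~\ref{teop-baer}(5) forces $A_p$ to centralise some Hall $p'$-subgroup of $G$; since $\ce{G}{A_p}\leqslant\ce{G}{x}$ for every $x\in A_p$, this makes each $i_G(x)$ both a $q$-number and a $p$-number, so either $q=p$ (and then $\abs{G:\ce{G}{A_p}}$ is already a $p$-number) or $A_p\leqslant\ze{G}$ and $\ce{G}{A_p}=G$. If instead $A_p\leqslant\ce{G}{\rad{p}{G}}$, I would run a Frattini argument on the normal subgroup $A_p\rad{q}{A}$ of $A$, combined with $[\no{G}{A_p},A_p]\leqslant A_p\cap\fit{G}$ (which holds because $G'\leqslant\fit{G}$) and the coprime action of $A_p$, to control $\abs{G:\ce{G}{A_p}}$. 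I expect this last case to be the main obstacle: exactly as remarked in the Introduction, indices need not pass well to subgroups nor behave well under intersections, and this is precisely the difficulty that has to be overcome here.

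With $\abs{G:\ce{G}{P}}$ shown to be a $\{q,r\}$-number, parts (1) and (2) follow from a short case distinction. If $P$ is non-abelian then Theorem~\ref{teop-baerprimes}(2) yields $p\in\{q,r\}$, so $\{q,r\}\subseteq\{p,q_0\}$ for a single prime $q_0$ and $\abs{G:\ce{G}{P}}$ is a $\{p,q_0\}$-number, which is (1). If $P$ is abelian then $P\leqslant\ce{G}{x}$ for every $x\in P$, so $p\nmid i_G(x)$; hence if $q=p$ then $i_G(x)=1$ for all $x\in A_p$, i.e.\ $A_p\leqslant\ze{G}$ and $\ce{G}{A_p}=G$, and likewise for $r$ and $B_p$. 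After discarding these central contributions one is left with $\abs{G:\ce{G}{P}}$ being a $\{q',r'\}$-number with $q',r'\ne p$, which is (2).

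Finally, for the last assertion: $G/\ce{G}{\rad{p}{G}}$ is $p$-decomposable by Theorem~\ref{teop-baer}(1); write it as $\overline{P}\times\overline{H}$ with $\overline{H}$ its $p$-complement. Since $\rad{p}{G}\leqslant P$ we have $\ce{G}{P}\leqslant\ce{G}{\rad{p}{G}}$, so $\abs{G:\ce{G}{\rad{p}{G}}}$ divides $\abs{G:\ce{G}{P}}$, which by (1)--(2) has at most two prime divisors; hence $\overline{H}$ has at most two prime divisors as well. For abelianity, $G'\leqslant\fit{G}$ by Corollary~\ref{teoNA}(1), and since $\fit{G}$ is nilpotent we have $\fit{G}=\rad{p}{G}\times\rad{p'}{\fit{G}}$ with $\rad{p'}{\fit{G}}\leqslant\ce{G}{\rad{p}{G}}$; consequently the image of $G'$ in $G/\ce{G}{\rad{p}{G}}$ is a $p$-group, so $\bigl(G/\ce{G}{\rad{p}{G}}\bigr)'$ is a $p$-group, which forces $\overline{H}'=1$, i.e.\ $\overline{H}$ is abelian.
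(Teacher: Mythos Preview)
Your global strategy is the right one and coincides with the paper's: reduce everything to showing that $\abs{G:\ce{G}{A_p}}$ is a $q$-power and $\abs{G:\ce{G}{B_p}}$ an $r$-power, then combine. Your deduction of (1), (2) and of the final assertion from this key claim is correct (for the combination step it suffices to note that every prime $s\notin\{q,r\}$ has a Sylow $s$-subgroup contained in both $\ce{G}{A_p}$ and $\ce{G}{B_p}$, hence in $\ce{G}{P}$). The difficulty is that you do not actually prove the key claim: you yourself flag the case $A_p\leqslant\ce{G}{\rad{p}{G}}$ as ``the main obstacle'' and offer only a vague ``Frattini argument on $A_p\rad{q}{A}\trianglelefteq A$''. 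That proposed argument lives entirely inside $A$ and gives no control over $\abs{G:\ce{G}{A_p}}$; something genuinely different is needed.

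The paper closes your gap with a further case split and two specific arguments, neither of which is a Frattini argument. First, if $A_p\nleqslant\fit{G}$ (which by Proposition~\ref{lemmaPXabelian}(a) already forces $A_p\leqslant\ce{G}{\rad{p}{G}}$, so this is a subcase of your gap), one chooses $T\in\hall{\{p,q\}'}{G}$ with $PT$ a $q$-complement; since $G/\fit{G}$ is abelian, $L:=P\rad{q}{G}T\trianglelefteq G$, so each $x\in A_p$ has $i_L(x)$ a $q$-power and is centralised by some $L$-conjugate of $T$. A covering argument on $K:=A_p\rad{q}{G}$ then gives $K=\ce{K}{T}\rad{q}{G}$, whence $[A_p,T]\leqslant PT\cap\rad{q}{G}=1$; together with $A_p\leqslant\ze{P}$ this shows $\abs{G:\ce{G}{A_p}}$ is a $q$-power (this is Proposition~\ref{theoremPAnotFitting}(c)). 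Second, if $A_p\leqslant\fit{G}$ and $P\leqslant\ce{G}{A_p}$, one proves directly that $\ce{G}{A_p}\trianglelefteq G$ with $G/\ce{G}{A_p}$ an abelian $p'$-group of prime-power order, using that $G/\ce{G}{\rad{p}{G}}$ is $p$-decomposable with abelian $p$-complement together with a short divisor count on the Sylow subgroups of the quotient (this is Proposition~\ref{theoremPCent}). Both steps are essential; without them the central claim, and hence the theorem, is unproved.
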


Finally, we have attained a characterisation of Baer factorisations through  the indices of the centralisers of the Sylow subgroups of the factors:

\begin{theoremletters}
\label{corindices}
Let $G=AB$ be the product of the subgroups $A$ and $B$. Then this is a Baer factorisation if and only if $\abs{G:\ce{G}{A_p}}$ and $\abs{G:\ce{G}{B_p}}$ are prime powers, for $A_p\in\syl{p}{A}$ and $B_p\in\syl{p}{B}$, and for every prime $p$.
\end{theoremletters}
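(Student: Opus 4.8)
The plan is to prove the two implications separately, exploiting that the condition is stated in terms of Sylow subgroups of the factors rather than individual elements. For the forward direction, assume $G=AB$ is a Baer factorisation. Fix a prime $p$ and take $A_p\in\syl{p}{A}$. By Proposition \ref{inheritstructure}, $A$ is a Baer group; in particular $A$ is $p$-soluble of $p$-length $1$ (this follows from Theorem \ref{teop-baer}(2) applied to $A=AA$, or from the known structure of Baer groups), so $A_p$ normalises a Hall $p'$-subgroup of $A$ and one sees that every element of $A$ is conjugate in $A$ into $A_p$ times a fixed $p'$-part; more to the point, I would argue directly that $\ce{G}{A_p}=\bigcap_{x}\ce{G}{x}$ as $x$ ranges over a generating set of $A_p$ consisting of $p$-elements of $A$, each of which has prime power index in $G$ by hypothesis. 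The subtlety is that an intersection of centralisers of elements with (possibly different) prime power indices need not have prime power index. To handle this I would invoke Theorem \ref{teop-baerprimes}: all $p$-elements of $A$ have index a $q$-number in $G$ for one fixed prime $q$ (depending on $p$ and on $A$). Hence $\ce{G}{A_p}$ is an intersection of subgroups of $p'$-index that are all of $\{p,q\}'$-index complement... more carefully, each $\ce{G}{x}$ contains a fixed Hall $\{q\}'$-type subgroup, so $G/\ce{G}{A_p}$ embeds in a direct product of $q$-groups and is therefore a $q$-group; thus $\abs{G:\ce{G}{A_p}}$ is a prime power. The same argument applies to $B_p$.

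For the converse, assume $\abs{G:\ce{G}{A_p}}$ and $\abs{G:\ce{G}{B_p}}$ are prime powers for all choices of Sylow subgroups of the factors and all primes $p$. Let $x\in A$ be a prime power order element, say a $p$-element. Then $x$ lies in some $A_p\in\syl{p}{A}$, so $\ce{G}{A_p}\leqslant\ce{G}{x}$, whence $i_G(x)=\abs{G:\ce{G}{x}}$ divides $\abs{G:\ce{G}{A_p}}$, which is a prime power by hypothesis; therefore $i_G(x)$ is a prime power. Since this holds for every prime $p$ and every prime power order element of $A$, and symmetrically for $B$, every prime power order element of $A\cup B$ has prime power index in $G$, i.e. $G=AB$ is a Baer factorisation.

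The main obstacle is the forward direction, specifically verifying that $\ce{G}{A_p}$ has prime power index rather than merely being an intersection of subgroups each of prime power index. This is exactly the point where the uniformity provided by Theorem \ref{teop-baerprimes} (one single prime $q$ serving all $p$-elements of $A$) is essential: without it the naive intersection argument fails. I would organise this step by first reducing to a generating set of $p$-elements of $A_p$ (every element of a $p$-group being a $p$-element, $A_p$ is generated by such elements, all lying in $A$), then noting each has index a $q$-number in $G$, so each centraliser contains a common Hall $q'$-subgroup $H$ of $G$; consequently $H\leqslant\ce{G}{A_p}$ and $\abs{G:\ce{G}{A_p}}$ divides $\abs{G:H}=\abs{G}_q$, a power of $q$. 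A small bookkeeping point worth checking is that the hypothesis "for $A_p\in\syl{p}{A}$" should be read as "for some, equivalently every" Sylow $p$-subgroup, since conjugate Sylow subgroups have conjugate centralisers of equal index; I would remark on this at the outset so both implications are unambiguous.
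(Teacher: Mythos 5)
Your converse direction is correct and coincides with the paper's: $\ce{G}{A_p}\leqslant\ce{G}{x}$ for any $p$-element $x\in A$, so $i_G(x)$ divides the prime power $\abs{G:\ce{G}{A_p}}$. The forward direction, however, has a genuine gap at exactly the step you flag as the main obstacle. You correctly note that Theorem \ref{teop-baerprimes} provides a single prime $q$ such that every $p$-element of $A$ has $q$-power index, so each $\ce{G}{x}$ with $x\in A_p$ contains \emph{some} Hall $q'$-subgroup of $G$; but you then assert without proof that all these centralisers contain a \emph{common} Hall $q'$-subgroup $H$. That is not automatic (an intersection of subgroups of $q$-power index need not have $q$-power index), and, more seriously, it cannot follow from Theorem \ref{teop-baerprimes} alone: that theorem uses only the $p$-Baer hypothesis, and the conclusion you want is false for $p$-Baer factorisations. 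The paper's Final examples (1) is a direct counterexample to your intermediate claim: there $\abs{G}=168$, every $2$-element has index $7$ (so $q=7$ uniformly for $p=2$), each $\ce{G}{x}$ for an involution $x\in A$ is itself a Hall $7'$-subgroup of $G$, yet these Hall subgroups are pairwise distinct and $\abs{G:\ce{G}{A_2}}=21$ is not a prime power.

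Closing this gap requires the Baer hypothesis at all primes, and that is where the paper's actual work lies. Its proof takes a prefactorised $P=(P\cap A)(P\cap B)\in\syl{p}{G}$ and splits into cases according to whether $P\cap X\leqslant\fit{G}$ and whether $P\leqslant\ce{G}{\rad{p}{G}\cap X}$. When $P\cap X\nleqslant\fit{G}$ it invokes Proposition \ref{theoremPAnotFitting}(c), whose proof manufactures precisely your ``common'' complement $T\in\hall{\{p,q\}'}{G}$ via a covering argument ($K=\ce{K}{T}\rad{q}{G}$, hence $[P\cap X,T]=1$) that relies on $G/\fit{G}$ being abelian, i.e.\ on Corollary \ref{teoNA}(1), which needs the Baer condition at the other primes. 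When $P\cap X\leqslant\fit{G}$ it uses Lemma \ref{theoremPnotCent} or Proposition \ref{theoremPCent}, the latter again resting on the full-Baer statement Corollary \ref{lemma_p-decompoallp2}. So your reduction of the forward direction to showing that $\abs{G:\ce{G}{A_p}}$ is a prime power is sound, and your bookkeeping remark about conjugate Sylow subgroups is fine, but the mechanism you propose for the key step does not work as stated.
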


In Section \ref{sec_p} we prove Theorems \ref{teop-baer} and \ref{teop-baerprimes}, which refer to prime power indices of $p$-elements, for a fixed prime $p$. The remaining stated results, which consider prime power order elements (for all primes), are proved in Section \ref{sec_allp}. We illustrate the scope of our research with some examples.

%%%%%%%%%%%%%%%%%%%%%%%%%%%%%%%%%%%%%%%%%%%%%%%%%%%%%%%%%%%%%%%%%%%%%%%%%%%%%%%%%%%%%%%%%%%%%%%%%

\section{Preliminary results}

We will use the following elementary properties frequently, sometimes without further reference.

\begin{lemma}
\label{inherithyp}
Let $N$ be a normal subgroup of a group $G$, and let $p$ be a prime. Then:

(a) $i_N(x)$ divides $i_G(x)$, for any $x\in N$.
	
(b) $i_{G/N}(xN)$ divides $i_G(x)$, for any $x\in G$.
	
(c)  If $xN$ is a $p$-element of $G/N$, then there exists a $p$-element $x_{1}\in G$ such that $xN = x_{1}N$.

\end{lemma}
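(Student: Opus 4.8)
The plan is to prove the three items separately; each is a standard fact about conjugacy class sizes, so the argument in every case reduces to a short centraliser computation. For (a), I would fix $x\in N$ and use the identity $\ce{N}{x}=N\cap\ce{G}{x}$. The second isomorphism theorem then gives
$i_N(x)=\abs{N:N\cap\ce{G}{x}}=\abs{N\ce{G}{x}:\ce{G}{x}}$,
and since $\ce{G}{x}\leqslant N\ce{G}{x}\leqslant G$, this number divides $\abs{G:\ce{G}{x}}=i_G(x)$. For (b), I would pass to $\overline{G}=G/N$ and observe that the image $\overline{\ce{G}{x}}=\ce{G}{x}N/N$ centralises $\overline{x}=xN$, whence $\overline{\ce{G}{x}}\leqslant\ce{\overline{G}}{\overline{x}}$. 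Therefore $i_{G/N}(xN)=\abs{\overline{G}:\ce{\overline{G}}{\overline{x}}}$ divides $\abs{\overline{G}:\overline{\ce{G}{x}}}=\abs{G:N\ce{G}{x}}$, which in turn divides $\abs{G:\ce{G}{x}}=i_G(x)$.

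For (c), assuming $xN$ has $p$-power order in $G/N$, I would take the decomposition $x=x_{p}x_{p'}$ of $x$ into its $p$-part and $p'$-part inside the cyclic group $\langle x\rangle$; the two factors commute and are both powers of $x$. Passing to the quotient, $xN=(x_{p}N)(x_{p'}N)$, with the factors commuting and of coprime orders (a $p$-power and a $p'$-number, respectively). Since $\langle xN\rangle$ is a $p$-group, its only element of $p'$-order is the identity, so $x_{p'}N=N$, i.e.\ $x_{p'}\in N$; thus $xN=x_{p}N$, and $x_{1}:=x_{p}$ is the required $p$-element.

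I do not expect any genuine obstacle, since the lemma is elementary. The only points that need to be stated with a little care are, in (a) and (b), the identity $\ce{N}{x}=N\cap\ce{G}{x}$ and the fact that the image of a centraliser centralises the image of the element, and, in (c), that the primary components $x_{p},x_{p'}$ of $x$ are themselves powers of $x$ (so that they are well behaved under the projection $G\to G/N$) together with the observation that two commuting elements of coprime orders generate their direct product, which is exactly what forces $x_{p'}$ to lie in $N$.
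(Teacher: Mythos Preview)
Your proof is correct in all three parts. The paper itself does not supply a proof of this lemma; it is stated as an elementary preliminary fact (``We will use the following elementary properties frequently, sometimes without further reference''), so there is nothing to compare against, and your argument is exactly the standard one.
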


The next result about Sylow subgroups of factorised groups will be useful along the paper. It is a convenient reformulation of \cite[1.3.3]{AMB}.

\begin{lemma}\emph{\cite[1.3.3]{AMB}}\label{prefact_sylow}
Let $G=AB$ be the product of the subgroups $A$ and $B$. Then for each $p\in \pi(G)$ there exists $P\in\syl{p}{G}$ such that $P= (P \cap A)(P \cap B)$, with $P \cap A\in\syl{p}{A}$ and $P \cap B\in\syl{p}{B}$.
\end{lemma}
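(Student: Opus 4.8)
The plan is to reduce the whole statement to a single existence assertion plus one order computation. Since $G=AB$, the product order formula $\abs{G}=\abs{A}\,\abs{B}/\abs{A\cap B}$ gives, on passing to $p$-parts, $\abs{G}_p=\abs{A}_p\,\abs{B}_p/\abs{A\cap B}_p$. The key point to establish is then the following \emph{Claim}: there exists $P\in\syl{p}{G}$ with $P\cap A\in\syl{p}{A}$ and $P\cap B\in\syl{p}{B}$. Granting the Claim, the factorisation $P=(P\cap A)(P\cap B)$ comes for free: both $P\cap A$ and $P\cap B$ lie in the subgroup $P$, so $(P\cap A)(P\cap B)\subseteq P$, while its cardinality equals $\abs{P\cap A}\,\abs{P\cap B}/\abs{P\cap A\cap B}$. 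As $P\cap A\cap B$ is a $p$-subgroup of $A\cap B$ we have $\abs{P\cap A\cap B}\le\abs{A\cap B}_p$, whence $\abs{(P\cap A)(P\cap B)}\ge\abs{A}_p\,\abs{B}_p/\abs{A\cap B}_p=\abs{G}_p=\abs{P}$. The two estimates force $(P\cap A)(P\cap B)=P$, so the entire content is concentrated in the Claim.

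The Claim is easy \emph{one factor at a time}, by a short counting argument. The group $A$ acts by conjugation on $\syl{p}{G}$, and since $\abs{\syl{p}{G}}\equiv 1\pmod p$, at least one $A$-orbit has length coprime to $p$. If $P$ represents such an orbit, then $p\nmid\abs{A:\no{A}{P}}$, so $\no{A}{P}=A\cap\no{G}{P}$ contains some $A_p\in\syl{p}{A}$; being a $p$-subgroup that normalises the Sylow $p$-subgroup $P$, this $A_p$ satisfies $A_pP=P$, i.e. $A_p\le P$. Consequently $P\cap A$ is a $p$-subgroup of $A$ containing the Sylow subgroup $A_p$, so $P\cap A=A_p\in\syl{p}{A}$. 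The same argument applied to $B$ yields a (possibly different) Sylow $p$-subgroup of $G$ meeting $B$ in a Sylow subgroup of $B$.

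The genuine obstacle, which I expect to be the crux, is to realise both conditions inside one and the same $P$: a priori the Sylow subgroup produced by the $A$-count need not meet $B$ in a full Sylow subgroup, and conjugating to repair the $B$-side may spoil the $A$-side. This simultaneous compatibility is precisely the substance of \cite[1.3.3]{AMB}, and it rests on the elementary feature of products that $G=A^gB=AB^g$ for every $g\in G$ (which one checks by writing $g=ab$ with $a\in A$, $b\in B$). The sharpened statement is then obtained either by a finer orbit count that tracks both factors at once, or by induction on $\abs{G}$ after factoring out a suitably chosen prefactorised normal subgroup. Once such a single $P\in\syl{p}{G}$ with $P\cap A\in\syl{p}{A}$ and $P\cap B\in\syl{p}{B}$ is in hand, the order computation of the first paragraph closes the argument.
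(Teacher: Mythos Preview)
The paper does not prove this lemma at all: it is quoted verbatim from \cite[1.3.3]{AMB} and used as a black box, so there is no ``paper's proof'' to compare against. What remains is to assess your outline on its own.

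Your reduction to the \emph{Claim} and the order computation deducing $P=(P\cap A)(P\cap B)$ from the Claim are both correct. The one-factor orbit argument is also fine. The genuine gap is that you explicitly do not prove the simultaneous version of the Claim; you only gesture at ``a finer orbit count'' or ``induction'' and defer back to \cite{AMB}. That is the whole content of the lemma, so as written the proposal is not a proof.

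The good news is that the missing step is easier than you suggest, and the ingredient you already isolated---that every $g\in G$ can be written as $g=ab$ with $a\in A$, $b\in B$---is exactly what is needed. Start from a $P\in\syl{p}{G}$ with $P\cap A\in\syl{p}{A}$, obtained by your one-factor argument. Pick any $B_p\in\syl{p}{B}$ and a $Q\in\syl{p}{G}$ containing it; write $Q=P^{g}$ and $g=ab$. Then $P^{a}=Q^{\,b^{-1}}$ satisfies $P^{a}\cap A=(P\cap A)^{a}\in\syl{p}{A}$ (conjugation by $a\in A$) and $P^{a}\cap B\geqslant B_p^{\,b^{-1}}\in\syl{p}{B}$ (conjugation by $b^{-1}\in B$), so $P^{a}$ meets both factors in full Sylow subgroups. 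No double orbit count or induction is required.
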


\begin{remark}
We call attention to some facts on Sylow subgroups of factorised groups which will be used sometimes with no citation. Let $G=AB$ be the product of the subgroups $A$ and $B$, and let $p$ be a prime.

(1) Consider a Sylow $p$-subgroup $P=(P\cap A)(P\cap B)$ of $G$ such that $P\cap A\in\syl{p}{A}$ and $P\cap B\in\syl{p}{B}$. Then imposing arithmetical conditions on the indices of the $p$-elements in $A\cup B$ is equivalent to impose them on the indices of the elements in $(P\cap A)\cup (P\cap B)$, because of the conjugacy of Sylow $p$-subgroups.

(2) There exist easy examples which show that not every prefactorised Sylow $p$-subgroup $P=(P\cap A)(P\cap B)$ verifies that $P \cap A\in\syl{p}{A}$ and $P \cap B\in\syl{p}{B}$.

(3) In general, $\rad{p}{G}$ does not need to be prefactorised. However, if $P=(P\cap A)(P\cap B)\in\syl{p}{G}$ with either $P\cap A\leqslant\fit{G}$ or $P\cap B\leqslant\fit{G}$, then by the Dedekind law we get that in this case $\rad{p}{G}$ is prefactorised. 
\end{remark}

The following result is due to Wielandt.

\begin{lemma}\emph{\cite[Lemma 6]{B}}\label{wielandt}
Let $G$ be a finite group and $p\in \pi(G)$. If $x\in G$ is a $p$-element and $i_G(x)$ is a $p$-number, then $x\in\op{O}_{p}(G)$.
\end{lemma}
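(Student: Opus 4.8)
The plan is to show that the whole conjugacy class $x^G$ is contained in a single Sylow $p$-subgroup of $G$; once this is established, the normal closure $\langle x^G\rangle$ is a normal $p$-subgroup, hence contained in $\rad{p}{G}$, and the conclusion $x\in\rad{p}{G}$ follows at once. Write $i_G(x)=p^a$ and $\abs{G}_p=p^n$, so that the Sylow $p$-part of $\ce{G}{x}$ has order $p^{n-a}$, since $\abs{G:\ce{G}{x}}=p^a$ is prime to every prime other than $p$.

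First I would choose the Sylow $p$-subgroup carefully. As $x$ is a $p$-element and $x\in\ce{G}{x}$, the cyclic group $\langle x\rangle$ lies in some $S\in\syl{p}{\ce{G}{x}}$, with $\abs{S}=p^{n-a}$. Extending $S$ to a Sylow $p$-subgroup $P$ of $G$, we have $S\leqslant P\cap\ce{G}{x}$; but $\abs{P\cap\ce{G}{x}}\leqslant\abs{\ce{G}{x}}_p=p^{n-a}=\abs{S}$, forcing $P\cap\ce{G}{x}=S=\ce{P}{x}$. Computing the $P$-class of $x$ by the orbit--stabiliser relation then gives $\abs{x^P}=\abs{P:\ce{P}{x}}=p^n/p^{n-a}=p^a=\abs{x^G}$. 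Since $x^P\subseteq x^G$ and both sets are finite of the same cardinality, I conclude $x^P=x^G$.

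Because $x\in P$, the equality $x^G=x^P$ shows that every conjugate of $x$ lies in $P$, that is, $x^G\subseteq P$. Therefore the normal closure $\langle x^G\rangle$ is a $p$-group which is normal in $G$, so $\langle x^G\rangle\leqslant\rad{p}{G}$ and in particular $x\in\rad{p}{G}$, as required. The only point that needs genuine care is the second step: one must guarantee that the chosen $P$ meets $\ce{G}{x}$ in a \emph{full} Sylow $p$-subgroup of $\ce{G}{x}$, which is exactly where the hypothesis that $i_G(x)$ is a $p$-number enters, through the order count $\abs{P\cap\ce{G}{x}}=\abs{\ce{G}{x}}_p$. Note that neither induction nor any appeal to the deeper structure of $G$ is needed.
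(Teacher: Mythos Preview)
Your argument is correct and is precisely the classical counting proof attributed to Wielandt: extend a Sylow $p$-subgroup of $\ce{G}{x}$ containing $x$ to $P\in\syl{p}{G}$, use the index hypothesis to force $\ce{P}{x}\in\syl{p}{\ce{G}{x}}$, conclude $x^P=x^G\subseteq P$, and deduce $\langle x^G\rangle\leqslant\rad{p}{G}$.

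There is nothing to compare against: the paper does not prove this lemma but merely quotes it as a preliminary result from the literature, citing \cite[Lemma 6]{B}. Your write-up supplies exactly the standard proof one would expect behind that citation.
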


In \cite{CC}, Camina and Camina proved the next proposition, which extends the above lemma and a well-known result of Burnside about the non-simplicity of groups with a conjugacy class of prime power size.

\begin{proposition}\emph{\cite[Theorem 1]{CC}}
\label{CaminaCamina}
Let $G$ be a finite group. Then all elements of prime power index lie in $\op{F}_2(G)$, the second term of the Fitting series of $G$.
\end{proposition}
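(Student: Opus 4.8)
The plan is to induct on $\abs{G}$, and to begin with a reduction that disposes of the ``same prime'' contributions via Wielandt. Writing $i_G(x)=p^{a}$, I would first decompose $x=x_1\cdots x_k$ into its commuting prime-power parts, where $x_i$ is a $q_i$-element for distinct primes $q_i$, so that $\ce{G}{x}=\bigcap_i\ce{G}{x_i}$ and hence each $i_G(x_i)$ divides $i_G(x)=p^{a}$. Since $\op{F}_2(G)$ is a subgroup, it suffices to place each $x_i$ in it. For a factor $x_i$ that is a $p$-element the index $i_G(x_i)$ is a $p$-number, so Lemma~\ref{wielandt} already gives $x_i\in\rad{p}{G}\leqslant\fit{G}\leqslant\op{F}_2(G)$. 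Thus the whole problem reduces to a single $q$-element $x$ with $q\neq p$ and $i_G(x)=p^{a}$, $a\geqslant 1$; note that such an $x$ centralises a full Sylow $q$-subgroup containing it, since $q\nmid p^{a}$.

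The key external input is Burnside's classical theorem: a nonabelian simple group has no conjugacy class of prime-power size greater than $1$ (this is the non-simplicity result alluded to in the introduction, and is the natural companion of Lemma~\ref{wielandt}). I would use it in two ways. First, it guarantees that $G$ is not simple, so a proper minimal normal subgroup $N$ exists. Second, and more substantially, it forces $x$ to centralise every nonabelian chief factor $H/K$ of $G$: if $xK$ acted nontrivially on the minimal normal nonabelian subgroup $H/K$ of $G/K$, then reducing to an almost-simple section would manufacture a conjugacy class of prime-power size inside a composition factor $S$ (using that $i_{G/K}(xK)$ divides $p^{a}$ by Lemma~\ref{inherithyp}(b)), contradicting Burnside; hence $[H,x]\leqslant K$. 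Passing to the quotient $G/N$, Lemma~\ref{inherithyp}(b) shows $i_{G/N}(xN)$ divides $p^{a}$ and $xN$ is again a $q$-element, so by induction $xN\in\op{F}_2(G/N)$.

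The hard part will be the lifting step, because membership in the second Fitting subgroup is \emph{not} inherited through quotients: $\op{F}_2(G/N)$ can be strictly larger than $\op{F}_2(G)/N$ (already for $G=S_4$ and $N=\rad{2}{G}$, where $\op{F}_2(G)=A_4$ but $G/N\cong S_3$ is its own second Fitting subgroup), so one cannot merely pull $xN\in\op{F}_2(G/N)$ back. To close this gap I would exploit that $x$ is a $q$-element and split on the prime of $N$. When $N$ is an abelian $\ell$-group, $N\leqslant\rad{\ell}{G}\leqslant\fit{G}$ and the additional nilpotent room created in $G/N$ is concentrated at the prime $\ell$; a $q$-element with $q\neq\ell$ therefore cannot escape $\op{F}_2(G)$, while the residual case $\ell=q$ is controlled by Lemma~\ref{wielandt} applied to the $q$-section together with $q\nmid i_G(x)$. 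When $N$ is nonabelian, the second paragraph forces $x\in\ce{G}{N}$, and feeding back the fact that $x$ centralises all nonabelian chief factors while acting as a $q$-element of prime-power index on the abelian ones places $x$ in $\op{F}_2(G)$. I expect the delicate bookkeeping needed to merge the abelian case $\ell=q$ with the nonabelian case into one clean induction to be the genuine technical heart of the proof.
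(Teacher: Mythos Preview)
This proposition is not proved in the paper at all; it is quoted verbatim as a preliminary result from Camina--Camina \cite[Theorem 1]{CC}, so there is no in-paper argument against which to compare your attempt.

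Regarding the sketch itself: the reduction via the primary decomposition of $x$ together with Lemma~\ref{wielandt}, and the passage to $G/N$ using Lemma~\ref{inherithyp}(b), are correct and standard. The gap is exactly where you locate it, and your proposed repairs do not close it. Your heuristic that ``the additional nilpotent room created in $G/N$ is concentrated at the prime $\ell$'' is not justified: in your own $S_4$ example with $N=V_4$ one has $\fit{G}/N=1$ while $\fit{G/N}\cong C_3$, so the discrepancy at the Fitting level sits at the prime $3$, not at $\ell=2$; you then need a separate argument to see why a $q$-element in $\op{F}_2(G/N)$ with $q\neq\ell$ must already lie in $\op{F}_2(G)/N$, and none is given. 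The nonabelian case is also only gestured at: the assertion that $x$ centralises every nonabelian chief factor does not follow directly from Burnside's theorem, because $xK$ may permute the simple direct factors of $H/K$ or induce an outer automorphism on one of them, and Burnside's result concerns class sizes \emph{inside} a simple group, not in its automorphism group. Even granting $[N,x]=1$, you still face the same lifting problem, since a nonabelian $N$ need not lie in $\fit{G}$. In short, the plan has the right ingredients but the ``delicate bookkeeping'' you anticipate is the entire proof, and nothing here carries it out.
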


Finally, the lemma below due to Berkovich and Kazarin is a key fact in the proof of Theorem \ref{teop-baerprimes}.

\begin{lemma}\emph{\cite[Lemma 4]{BK}}
\label{lemmaBK}
Let $G$ be a finite group, and let $p$ be a prime. Suppose that the $p$-elements $x, y\in G\smallsetminus\ze{G}$ are such that $i_G(x)$ and $i_G(y)$ are powers of distinct primes, and that $i_G(xy)$ is also a power of a prime. Then $\langle x, y\rangle^G\leqslant\rad{p}{G}$ and $i_G(xy)=\op{max}\lbrace i_G(x), i_G(y)\rbrace$ is a power of $p$, so a Sylow $p$-subgroup of $G$ is non-abelian.
\end{lemma}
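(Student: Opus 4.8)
The plan is to set $i_G(x)=s^a$ and $i_G(y)=t^b$ with $s\neq t$ (both exponents at least $1$, since $x,y\notin\ze{G}$), write $z=xy$, and first pin down $i_G(z)$ by pure centraliser arithmetic before invoking that $x,y$ are $p$-elements. I would start from the inclusion $\ce{G}{x}\cap\ce{G}{y}\leqslant\ce{G}{z}$ together with the elementary bound $\abs{G:H\cap K}\leqslant\abs{G:H}\abs{G:K}$: since $i_G(x)$ and $i_G(y)$ both divide $\abs{G:\ce{G}{x}\cap\ce{G}{y}}$ and are coprime, this index equals $s^a t^b$, so $i_G(z)$ divides $s^a t^b$ and, being a prime power, is a power of $s$ or of $t$. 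To compute it exactly I would use $x^y=y^{-1}z$: from $\ce{G}{y}\cap\ce{G}{z}\leqslant\ce{G}{x^y}$ and the coprimality of $i_G(y)=t^b$ with a power of $s$, the same intersection computation forces $i_G(z)=i_G(x)$ when the prime is $s$, and symmetrically $i_G(z)=i_G(y)$ when it is $t$. Thus $i_G(z)\in\{i_G(x),i_G(y)\}$, matching whichever prime it realises.

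The heart of the argument, and the step I expect to be hardest, is to show that the realised prime must be $p$. The key auxiliary fact I would isolate is: \emph{if $u,v$ commute and $i_G(u),i_G(v)$ are coprime, then $i_G(uv)=i_G(u)i_G(v)$}; indeed any element centralising $uv$ and $u$ centralises $v$, so $\ce{G}{uv}\cap\ce{G}{u}=\ce{G}{u}\cap\ce{G}{v}$, and the index inequality bounds $\abs{\ce{G}{uv}:\ce{G}{u}\cap\ce{G}{v}}$ by both $i_G(u)$ and $i_G(v)$, hence by $1$. Now suppose for contradiction the realised prime differs from $p$; say $i_G(z)=i_G(x)=s^a$ with $s\neq p$. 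If also $t\neq p$, then $\abs{G:\ce{G}{x}\cap\ce{G}{y}}=s^a t^b$ is prime to $p$, so $\ce{G}{x}\cap\ce{G}{y}$ contains a Sylow $p$-subgroup $P_0$; both $x,y$ centralise $P_0$ and, being $p$-elements of $\ce{G}{P_0}$, lie in $P_0$, so $x$ (centralising $P_0\ni y$) centralises $y$. If instead $t=p$, then $i_G(y)=p^b$ gives $y\in\rad{p}{G}$ by Wielandt (Lemma \ref{wielandt}), while $i_G(x)=s^a$ prime to $p$ forces $\rad{p}{G}$ into a Sylow $p$-subgroup contained in $\ce{G}{x}$, so again $x$ centralises $y$. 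In either case $[x,y]=1$, and the auxiliary fact yields $i_G(z)=s^a t^b$, not a prime power — a contradiction. Hence the realised prime is $p$.

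Finally I would extract the stated conclusions, assuming (by the $x\leftrightarrow y$ symmetry) $i_G(z)=i_G(x)=p^a$. Then $x$ is a $p$-element of $p$-power index, so $x\in\rad{p}{G}$ by Wielandt. Passing to $\overline{G}=G/\rad{p}{G}$, where $\overline{z}=\overline{y}$, Lemma \ref{inherithyp}(b) shows $i_{\overline G}(\overline y)$ divides both $i_G(z)=p^a$ and $i_G(y)=t^b$, hence equals $1$; so $\overline y\in\ze{\overline G}$, the normal closure $\langle y^G\rangle$ lies in the $p$-group $\langle y\rangle\rad{p}{G}$ and, being a normal $p$-subgroup, lies in $\rad{p}{G}$, giving $y\in\rad{p}{G}$. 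Thus $x,y\in\rad{p}{G}$, so $\langle x,y\rangle^G\leqslant\rad{p}{G}$ and $z=xy$ is a $p$-element. For maximality, $\ce{G}{z}\cap\ce{G}{x}=\ce{G}{x}\cap\ce{G}{y}$ has index $p^a t^b$, whence $\abs{\ce{G}{z}:\ce{G}{x}\cap\ce{G}{y}}=t^b$, which the index inequality bounds by $\abs{G:\ce{G}{x}}=p^a$; so $t^b\leqslant p^a$ and $i_G(z)=p^a=\max\{i_G(x),i_G(y)\}$. Lastly, choosing $P\in\syl{p}{G}$ with $\rad{p}{G}\leqslant P$, the $p$-part of $i_G(z)=p^a$ equals $\abs{P:\ce{P}{z}}=p^a>1$, so $z\notin\ze{P}$ and $P$ is non-abelian, as required.
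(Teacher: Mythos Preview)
The paper does not actually prove this lemma: it is quoted verbatim from Berkovich--Kazarin \cite[Lemma~4]{BK} as a preliminary result, with no argument given. So there is no ``paper's own proof'' to compare against.

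That said, your proposal is a correct and self-contained proof. The centraliser arithmetic in the first paragraph is sound: from $\ce{G}{x}\cap\ce{G}{y}\leqslant\ce{G}{z}$ and coprimality you correctly get $\abs{G:\ce{G}{x}\cap\ce{G}{y}}=s^at^b$, and the trick $x^{y}=y^{-1}z$ together with $\ce{G}{y}\cap\ce{G}{z}\leqslant\ce{G}{x^{y}}$ pins down $i_G(z)$ exactly. The case split in the second paragraph to force the prime to be $p$ is clean; the key point---that when both $s,t\neq p$ a full Sylow $p$-subgroup lies in $\ce{G}{x}\cap\ce{G}{y}$, forcing $x,y$ into it and hence $[x,y]=1$---is the standard move here, and your auxiliary fact on $i_G(uv)$ for commuting $u,v$ with coprime indices is correctly justified. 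The passage to $G/\rad{p}{G}$ to place $y$ in $\rad{p}{G}$ is fine, and the inequality $t^{b}\leqslant p^{a}$ via $\abs{\ce{G}{z}:\ce{G}{x}\cap\ce{G}{y}}=t^{b}\leqslant\abs{G:\ce{G}{x}}$ gives the ``max'' statement. For the final clause, note that your computation $\abs{P:\ce{P}{z}}=p^{a}$ uses both $\abs{P:\ce{P}{z}}\leqslant\abs{G:\ce{G}{z}}=p^{a}$ and $\abs{\ce{P}{z}}\leqslant p^{n-a}$ (the Sylow $p$-order of $\ce{G}{z}$); it would be worth making the second inequality explicit, but the conclusion $z\notin\ze{P}$ is correct.
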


%%%%%%%%%%%%%%%%%%%%%%%%%%%%%%%%%%%%%%%%%%%%%%%%%%%%%%%%%%%%%%%%%%%%%%%%%%%%%%%%%%%%%%

\section{Groups with a \texorpdfstring{$p$}{p}-Baer factorisation}
\label{sec_p}

In this section we will prove Theorems \ref{teop-baer} and \ref{teop-baerprimes} via a series of results. Firstly, we show two facts about $p$-decomposability in $p$-Baer factorisations.

\begin{lemma}
\label{lemma_p-decompo(1)}
Let $G=AB$ be the product of the subgroups $A$ and $B$, and let $p$ be a prime. Then $i_G(x)$ is a $p$-number for each $p$-element $x\in A\cup B$ if and only if $G$ is $p$-decomposable.
\end{lemma}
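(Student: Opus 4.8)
The two implications are proved separately. The reverse one is routine: if $G=\rad{p}{G}\times\rad{p'}{G}$ then $\rad{p}{G}$ is the unique Sylow $p$-subgroup of $G$, so every $p$-element $x\in G$ lies in $\rad{p}{G}$, whence $\rad{p'}{G}\leqslant\ce{G}{x}$ and $i_G(x)$ divides $\abs{\rad{p}{G}}$, a $p$-number; in particular this holds for the $p$-elements of $A\cup B$.

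For the forward implication I would first invoke Lemma \ref{prefact_sylow} to fix $P=(P\cap A)(P\cap B)\in\syl{p}{G}$ with $P\cap A\in\syl{p}{A}$ and $P\cap B\in\syl{p}{B}$. Each element of $P\cap A$ is a $p$-element of $A$, hence has $p$-power index in $G$ by hypothesis, hence lies in $\rad{p}{G}$ by Wielandt's Lemma \ref{wielandt}; the same applies to $P\cap B$. Thus $P=(P\cap A)(P\cap B)\leqslant\rad{p}{G}\leqslant P$, so $P=\rad{p}{G}\trianglelefteq G$; in particular $G/\rad{p}{G}$ is a $p'$-group and $G$ is $p$-soluble of $p$-length $1$.

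The remaining task is to upgrade ``$G$ has a normal Sylow $p$-subgroup'' to ``$G$ is $p$-decomposable'', which I would do by induction on $\abs{G}$. If $\rad{p'}{G}\neq 1$, put $\overline{G}=G/\rad{p'}{G}=\overline{A}\,\overline{B}$; by Lemma \ref{inherithyp}(c) a $p$-element of $\overline{A}$ lifts to a $p$-element of $A$, whose index in $\overline{G}$ divides its index in $G$ by Lemma \ref{inherithyp}(b), so $\overline{G}$ inherits the hypothesis and by induction is $p$-decomposable; since $\rad{p'}{\overline{G}}=1$ this forces $\overline{G}$ to be a $p$-group, i.e.\ $\rad{p'}{G}$ is a normal $p$-complement of $G$, and together with the normal Sylow $p$-subgroup $P$ we get $G=P\times\rad{p'}{G}$. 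Hence we may assume $\rad{p'}{G}=1$; then $\ce{G}{\rad{p}{G}}\leqslant\rad{p}{G}$ because $G$ is $p$-soluble, and the goal becomes $G=P$.

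This last step is where I expect the real work to lie. Fix a $p$-complement $H$ of $G$ (so $G=P\rtimes H$, and all $p$-complements are $P$-conjugate since $G$ is $p$-soluble). For $a\in P\cap A$, since $\ce{G}{a}$ contains a $p$-complement one checks $i_G(a)=\abs{P:\ce{P}{a}}$, so $a^{G}=a^{P}$; the same holds for $b\in P\cap B$, while $P=(P\cap A)(P\cap B)$. If $P\cap A$ or $P\cap B$ is a $p'$-group --- equivalently $P\leqslant A$ or $P\leqslant B$ --- then every element of $P$ is a $p$-element of a factor, so $P=\bigcup_{u\in P}\ce{P}{H}^{\,u}$ is a union of $P$-conjugates of $\ce{P}{H}$, and since a finite group is never the union of the conjugates of a proper subgroup we conclude $\ce{P}{H}=P$, i.e.\ $[P,H]=1$; then $H\trianglelefteq G$, $G=P\times H$ is $p$-decomposable, and $\rad{p'}{G}=1$ forces $H=1$, so $G=P$. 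For the general case the obstacle is to pass from the two \emph{proper} subgroups $P\cap A$ and $P\cap B$ to all of $P$; I would attempt this either by showing, from $a^{G}=a^{P}$, $b^{G}=b^{P}$ and a careful analysis of $i_G(ab)$ for $a\in P\cap A$, $b\in P\cap B$ (possibly via Lemma \ref{lemmaBK}), that in fact every $p$-element of $G$ has $p$-power index in $G$ --- which reduces to the previous case --- or by a second induction, quotienting out a minimal normal subgroup of $G$, which under $\rad{p'}{G}=1$ is necessarily an elementary abelian $p$-subgroup contained in $\ze{P}$, again keeping track of $\rad{p'}{\,\cdot\,}$.
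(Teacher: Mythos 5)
Your reverse implication, the identification $P=(P\cap A)(P\cap B)=\rad{p}{G}\trianglelefteq G$ via Wielandt's lemma, and the reduction to the case $\rad{p'}{G}=1$ are all correct and agree with the first paragraph of the paper's proof. But the heart of the lemma --- showing that a Hall $p'$-subgroup $H$ centralises $\rad{p}{G}$, equivalently that $G=P$ once $\rad{p'}{G}=1$ --- is precisely the part you leave unfinished, as you yourself acknowledge. Your covering argument only disposes of the degenerate case $P\leqslant A$ or $P\leqslant B$, and of the two strategies you propose for the general case, the first is a dead end: Lemma \ref{lemmaBK} requires $i_G(a)$ and $i_G(b)$ to be powers of \emph{distinct} primes, whereas here every relevant index is a $p$-power, so it says nothing about $i_G(ab)$ for $a\in P\cap A$, $b\in P\cap B$; moreover $ab$ need not lie in $A\cup B$, so there is no a priori reason for $i_G(ab)$ to be a prime power at all. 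Trying to prove that every $p$-element of $G$ has $p$-power index is essentially equivalent to the conclusion and cannot be extracted this way.

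The paper closes the gap with a version of your second strategy, made to work by the theory of saturated formations: since $p$-decomposable groups form a saturated formation and the hypothesis passes to quotients (Lemma \ref{inherithyp}), a minimal counterexample $G$ has a unique minimal normal subgroup $N$ with $N=\fit{G}=\ce{G}{N}=\rad{p}{G}$ by \cite[A, 15.2 and 15.8]{DH}. Then $\rad{p}{G}=P$ is abelian, so for every $x\in(P\cap A)\cup(P\cap B)$ the centraliser $\ce{G}{x}$ contains both $P$ and a conjugate of $H$, i.e.\ $x\in\ze{G}$; hence $P=(P\cap A)(P\cap B)\leqslant\ze{G}$, which gives $[H,\rad{p}{G}]=1$ and contradicts $\ce{G}{N}=N$ unless $G=P$. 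The missing idea in your sketch is exactly this reduction to a self-centralising (hence abelian) $\rad{p}{G}$: without it, quotienting by a minimal normal subgroup leaves you having to rule out a nontrivial coprime action of $H$ on $P$ that is trivial modulo $N$, and the hypothesis on $(P\cap A)\cup(P\cap B)$ alone does not obviously do that. If you want to avoid citing the formation machinery you must prove that reduction by hand.
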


\begin{proof}
Only the necessity of the condition is in doubt. Let $P=(P\cap A)(P\cap B)\in\syl{p}{G}$, which exists by virtue of Lemma \ref{prefact_sylow}. The hypotheses and Lemma \ref{wielandt} lead to $x\in \op{O}_{p}(G)$, for every $x\in (P\cap A)\cup(P\cap B)$. It follows that $P$ is normal in $G$ and so $G=\rad{p}{G}H$, with $H$ a Hall $p'$-subgroup of $G$.

It remains only to prove that $[H, \op{O}_{p}(G)]=1$. We may assume $\rad{p}{G}\neq 1$, so there exists a minimal normal subgroup $N$ of $G$ such that $N\leqslant \rad{p}{G}$. Since the class of $p$-decomposable groups is a saturated formation and the hypotheses hold for quotients of $G$, it follows by induction on $\abs{G}$ that $N$ is the unique minimal normal subgroup of $G$ and $N=\fit{G}=\ce{G}{N}=\op{O}_{p}(G)$ (see \cite[A - 15.2, 15.8]{DH}). Consequently, since each element in $(\rad{p}{G}\cap A)\cup(\rad{p}{G}\cap B)$ has index a $p$-number and $\rad{p}{G}$ is abelian, it follows that all of them are central in $G$. This fact yields $\rad{p}{G}=(\rad{p}{G}\cap A)(\rad{p}{G}\cap B)\leqslant\ze{G}$ and the claim is proved.  
\end{proof}

\begin{corollary}
\label{lemma_p-decompo(2)}
Let $G=AB$ be a $p$-Baer factorisation. Then $G/\ce{G}{\rad{p}{G}}$ is $p$-decomposable.
\end{corollary}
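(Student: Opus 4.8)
The plan is to reduce everything to Lemma~\ref{lemma_p-decompo(1)}, applied to the quotient $\bar G := G/C$ where $C := \ce{G}{\rad{p}{G}}$. Since the factorisation passes to quotients, $\bar G = \bar A\bar B$ with $\bar A := AC/C$ and $\bar B := BC/C$, so by Lemma~\ref{lemma_p-decompo(1)} it suffices to prove that $i_{\bar G}(\bar x)$ is a $p$-number for every $p$-element $\bar x\in\bar A\cup\bar B$.

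First I would lift each such $\bar x$ to the factors. If $\bar x$ is a $p$-element of $\bar A\cong A/(A\cap C)$, then Lemma~\ref{inherithyp}(c), applied inside $A$ to the normal subgroup $A\cap C$, produces a $p$-element $x\in A$ with $xC=\bar x$; symmetrically for $\bar B$. By the $p$-Baer hypothesis, $i_G(x)$ is a prime power.

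The heart of the argument is then a dichotomy on that prime power. If $i_G(x)$ is a $p$-number, then $i_{\bar G}(\bar x)=i_{G/C}(xC)$ divides $i_G(x)$ by Lemma~\ref{inherithyp}(b), hence is again a $p$-number. If instead $i_G(x)$ is a power of a prime $q\neq p$, then $p$ does not divide $\abs{G:\ce{G}{x}}$, so $\ce{G}{x}$ contains a full Sylow $p$-subgroup $P$ of $G$; as $\rad{p}{G}\leqslant P$, the element $x$ centralises $\rad{p}{G}$, i.e.\ $x\in C$ and $\bar x=1$, whose index is trivially a $p$-number. In every case $i_{\bar G}(\bar x)$ is a $p$-number, and the same holds over $\bar B$ by symmetry, so Lemma~\ref{lemma_p-decompo(1)} gives that $G/\ce{G}{\rad{p}{G}}$ is $p$-decomposable.

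The only mildly delicate point is that passing to $G/C$ does not obviously preserve the $p$-Baer condition: indices can a priori shrink and switch their supporting prime. What rescues the argument is precisely the observation in the dichotomy — a $p$-element whose index in $G$ is a power of a prime other than $p$ must centralise $\rad{p}{G}$ and therefore becomes trivial in $\bar G$ — so the only $p$-elements of $\bar A\cup\bar B$ that survive are those already carrying $p$-power index. Once this is noted, there is no genuine obstacle left.
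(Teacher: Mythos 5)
Your proposal is correct and follows essentially the same route as the paper: lift a nontrivial $p$-element of $\overline{A}\cup\overline{B}$ to a $p$-element $x\in A\cup B$, observe that a $p$-element whose index is a power of a prime $q\neq p$ has its centraliser containing a Sylow $p$-subgroup and hence $\rad{p}{G}$, so only $p$-power indices survive in the quotient, and then invoke Lemma~\ref{lemma_p-decompo(1)}. The paper phrases this as a contrapositive ($x\notin\ce{G}{\rad{p}{G}}$ forces $i_G(x)$ to be a $p$-power) rather than your explicit dichotomy, but the argument is identical.
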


\begin{proof}
We can assume $1\neq \rad{p}{G} \nleqslant \ze{G}$. Denote $\overline{G}:=G/\ce{G}{\rad{p}{G}}$. If $\overline{G}$ is a $p'$-group the result follows, so let $1\neq \overline{x}\in \overline{A}\cup \overline{B}$ be a $p$-element. Then we can consider a $p$-element $x\in A\cup B$ such that $\overline{x}=x\ce{G}{\rad{p}{G}}$, and $i_G(x)$ is a prime power. But since $x\notin \ce{G}{\rad{p}{G}}$, it follows that $i_G(x)$ is a power of $p$, and $i_{\overline{G}}(\overline{x})$ so is. Finally, the previous lemma applies. 
\end{proof}

\medskip

The lemma below provides the proof of Theorem \ref{teop-baer} (2).

\begin{lemma}
\label{p-solv}
Let $G=AB$ be a $p$-Baer factorisation, and let $P\in\syl{p}{G}$. Then:

(a) $P\fit{G}$ is normal in $G$.
	
(b) $P\rad{p'}{G}$ is normal in $G$. In particular, $G$ is $p$-soluble of $p$-length 1.

\end{lemma}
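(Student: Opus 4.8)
The plan is to reduce the statement to showing that $P\rad{p'}{G}$ is normal in $G$, then to the situation $\rad{p'}{G}=1$ (where this means $P\trianglelefteq G$), and to settle that situation by combining $p$-solubility with Corollary \ref{lemma_p-decompo(2)}.

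First I would note that (a) and (b) coincide: since $\fit{G}=\rad{p}{G}\rad{p'}{G}$ and $\rad{p}{G}\leqslant P$, we have $P\fit{G}=P\rad{p'}{G}$, so it is enough to show that $P\rad{p'}{G}\trianglelefteq G$. The plan is to prove first the special case $\rad{p'}{G}=1$ and deduce the general case from it. If $\rad{p'}{G}\neq 1$, then $\overline{G}:=G/\rad{p'}{G}=\overline{A}\,\overline{B}$ is again a $p$-Baer factorisation: a $p$-element of $\overline{A}$ (resp.\ $\overline{B}$) lifts, modulo the normal $p'$-subgroup $\rad{p'}{G}$, to a $p$-element of $A$ (resp.\ $B$), and its index in $\overline{G}$ divides the corresponding index in $G$ by Lemma \ref{inherithyp}; moreover $\rad{p'}{\overline{G}}=1$. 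Hence the special case applied to $\overline{G}$ gives $\overline{P}\trianglelefteq\overline{G}$, that is, $P\rad{p'}{G}\trianglelefteq G$.

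Next I would show that $G$ is $p$-soluble. By Lemma \ref{prefact_sylow} choose $P=(P\cap A)(P\cap B)$ with $P\cap A\in\syl{p}{A}$ and $P\cap B\in\syl{p}{B}$. Every element of $(P\cap A)\cup(P\cap B)$ is a $p$-element of $A$ or of $B$, hence has prime power index in $G$, and therefore lies in $\op{F}_2(G)$ by Proposition \ref{CaminaCamina}. Consequently $P=(P\cap A)(P\cap B)\leqslant\op{F}_2(G)$, so $\abs{G:\op{F}_2(G)}$ is a $p'$-number. Since $\op{F}_2(G)$ is metanilpotent (so soluble) and $G/\op{F}_2(G)$ is a $p'$-group, every composition factor of $G$ is a $p$-group or a $p'$-group, i.e.\ $G$ is $p$-soluble. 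Now suppose $\rad{p'}{G}=1$; we may assume $G\neq 1$, so $\rad{p}{G}\neq 1$ and, by the Hall--Higman Lemma 1.2.3, $C:=\ce{G}{\rad{p}{G}}\leqslant\rad{p}{G}\leqslant P$. By Corollary \ref{lemma_p-decompo(2)}, $G/C$ is $p$-decomposable, so its Sylow $p$-subgroup $P/C$ is normal in $G/C$; hence $P\trianglelefteq G$. Since $\rad{p'}{G}=1$ this yields $P=\rad{p}{G}=\fit{G}$, and so $P\fit{G}=P\rad{p'}{G}=P\trianglelefteq G$, proving (a) and (b) in this case and hence in general. Finally, $P\rad{p'}{G}\trianglelefteq G$ forces $\rad{p'p}{G}=P\rad{p'}{G}$ with $G/\rad{p'p}{G}$ a $p'$-group, so $G$ has $p$-length at most $1$.

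I expect the main obstacle to be twofold: checking carefully that the quotient $G/\rad{p'}{G}$ genuinely inherits the $p$-Baer factorisation hypothesis (the lifting of $p$-elements through a normal $p'$-subgroup together with Lemma \ref{inherithyp}), and, more substantially, isolating $p$-solubility from Proposition \ref{CaminaCamina} so that the Hall--Higman lemma applies; once $p$-solubility and $\rad{p'}{G}=1$ are in force, the normality of $P$ follows quickly from Corollary \ref{lemma_p-decompo(2)}.
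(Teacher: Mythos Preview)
Your proof is correct, but it takes a somewhat different route from the paper's for the key step. You and the paper both start from Proposition~\ref{CaminaCamina} to get $P\leqslant\op{F}_2(G)$, and both reduce~(b) to the case $\rad{p'}{G}=1$ by passing to $G/\rad{p'}{G}$. The divergence is in how that base case is handled. The paper observes directly that $P\leqslant\op{F}_2(G)$ forces $P\fit{G}/\fit{G}$ to be the Sylow $p$-subgroup of the nilpotent normal subgroup $\op{F}_2(G)/\fit{G}=\fit{G/\fit{G}}$, hence $P\fit{G}/\fit{G}=\rad{p}{G/\fit{G}}$ and $P\fit{G}\trianglelefteq G$; this already proves~(a) in full generality, and when $\rad{p'}{G}=1$ it gives $P=P\fit{G}\trianglelefteq G$. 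You instead extract $p$-solubility from $P\leqslant\op{F}_2(G)$, invoke the Hall--Higman lemma to get $\ce{G}{\rad{p}{G}}\leqslant\rad{p}{G}$, and then appeal to Corollary~\ref{lemma_p-decompo(2)} to conclude that $P\trianglelefteq G$. Your argument is sound and has the virtue of making the $p$-solubility explicit early, but it imports two extra tools (Hall--Higman and Corollary~\ref{lemma_p-decompo(2)}) where the paper needs only the definition of $\op{F}_2(G)$; in particular the paper's proof of~(a) is independent of Corollary~\ref{lemma_p-decompo(2)}.
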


\begin{proof}
(a) Let $P=(P\cap A)(P\cap B)\in\syl{p}{G}$, which exists by virtue of Lemma \ref{prefact_sylow}. By Proposition \ref{CaminaCamina} and our assumptions, we have $x\in \op{F}_2(G)$ for every element $x\in (P\cap A)\cup (P\cap B)$. Therefore $P\leqslant\op{F}_2(G)$, so $P\fit{G}/\fit{G}=\rad{p}{G/\fit{G}}$ and $P\fit{G}$ is normal in $G$.

(b) We proceed by induction on $\abs{G}$. If $N:=\rad{p'}{G}=1$, then the result follows by (a). Hence we may assume $N\neq 1$. Since $\overline{G}:=G/N$ inherits the hypotheses by Lemma \ref{inherithyp}, then $\overline{P}\rad{p'}{\overline{G}}=PN/N$ is normal in $G/N$, and the claim is proved. 
\end{proof}

\medskip

Note that the existence of Hall $p'$-subgroups in $p$-Baer factorisations is guaranteed as a consequence of the $p$-solubility of such groups. Indeed, if $G=AB$ is a Baer factorisation (i.e., it is $p$-Baer for all $p$), then it follows that $G/\fit{G}$ is nilpotent. This fact will be strengthened later (see Proposition \ref{teoNA}).

\begin{proposition}
\label{lemmaPXabelian}
Let $G=AB$ be a $p$-Baer factorisation, and let $P=(P\cap A)(P\cap B)\in\syl{p}{G}$.

(a) If for some $X\in\{A, B\}$ it holds that $P\cap X\nleqslant \fit{G}$, then $P\cap X \leqslant \ce{G}{\op{O}_{p}(G)}$, $P\cap X$ is abelian, and $[P\cap A, P\cap B]=1$.
	
(b) If both $P\cap A\nleqslant \fit{G}$ and $P\cap B\nleqslant\fit{G}$, then $P$ is abelian.
	
(c) The Sylow $p$-subgroup of $G/\fit{G}$ is abelian.
	
(d) $P$ is abelian if and only if $\rad{p}{G}$ so is.
	
(e) If $P\cap X\nleqslant\ce{G}{\rad{p}{G}}$ for some $X\in\{A, B\}$, then $P\cap X\leqslant\ce{G}{H}$ for every $H\in\hall{p'}{G}$. In particular, this holds when $P\cap X$ is non-abelian.

\end{proposition}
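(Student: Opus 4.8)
The plan is to derive everything from a single dichotomy. Set $D:=\ce{G}{\op{O}_{p}(G)}$ (so $D=\ce{G}{\rad{p}{G}}$), and recall from Lemma \ref{p-solv} that $G$ is $p$-soluble of $p$-length $1$, so $P\rad{p'}{G}\trianglelefteq G$ and Hall $p'$-subgroups of $G$ exist and are conjugate. Since $P=(P\cap A)(P\cap B)$ is prefactorised, every element of $(P\cap A)\cup(P\cap B)$ is a $p$-element lying in $A$ or in $B$, hence has prime power index in $G$. For such an $x$: if $i_{G}(x)$ is a power of $p$, then $x\in\op{O}_{p}(G)$ by Wielandt's Lemma \ref{wielandt}; otherwise $i_{G}(x)$ is prime to $p$, so $\ce{G}{x}$ contains a full Sylow $p$-subgroup $Q$ of $G$, whence $\op{O}_{p}(G)\leqslant Q\leqslant\ce{G}{x}$ and $x\in D$ — and moreover $x$, being a $p$-element centralising $Q$, lies in $\ze{Q}$. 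Note also that, since $P$ is a $p$-group, $P\cap\fit{G}=\op{O}_{p}(G)$, so for the $p$-group $P\cap X$ the conditions $P\cap X\nleqslant\fit{G}$ and $P\cap X\nleqslant\op{O}_{p}(G)$ are equivalent.

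For part (a), fix $X$ with $P\cap X\nleqslant\op{O}_{p}(G)$. By the dichotomy $(P\cap X)\smallsetminus\op{O}_{p}(G)\subseteq D$; as a finite group is never the union of two proper subgroups and $(P\cap X)\cap\op{O}_{p}(G)<P\cap X$, these elements generate $P\cap X$, so $P\cap X\leqslant D$. For abelianness, take $u,v\in(P\cap X)\smallsetminus\op{O}_{p}(G)$: the index of $\ce{G}{u}\cap\ce{G}{v}$ divides $i_{G}(u)i_{G}(v)$, a $p'$-number, so this intersection contains some $Q\in\syl{p}{G}$, and since $u,v$ are $p$-elements centralising $Q$ they lie in $\ze{Q}$, giving $[u,v]=1$; a group with a pairwise commuting generating set is abelian. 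Finally, for $[P\cap A,P\cap B]=1$ I may assume (relabelling) $P\cap A\nleqslant\op{O}_{p}(G)$. If $P\cap B\leqslant\op{O}_{p}(G)$, then $[P\cap A,P\cap B]\leqslant[D,\op{O}_{p}(G)]=1$. Otherwise $P\cap B\leqslant D$ too, so $P=(P\cap A)(P\cap B)\leqslant D$ and $\op{O}_{p}(G)\leqslant\ze{P}$; then for $x\in P\cap A$, $y\in P\cap B$, either one of them lies in $\op{O}_{p}(G)\leqslant\ze{P}$, or both lie outside $\op{O}_{p}(G)$ and the centraliser-intersection argument above (whose only input is that $i_{G}(x)i_{G}(y)$ is a $p'$-number) again puts $x,y$ in $\ze{Q}$ for a common $Q\in\syl{p}{G}$; in either case $[x,y]=1$.

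Parts (b)--(d) are then quick corollaries. For (b) the hypothesis is exactly the ``both escape'' case of (a), so $P\cap A$ and $P\cap B$ are abelian and centralise each other, whence $P=(P\cap A)(P\cap B)$ is abelian. For (c), the Sylow $p$-subgroup of $G/\fit{G}$ is $P\fit{G}/\fit{G}\cong P/\op{O}_{p}(G)$, so I must show $P'\leqslant\op{O}_{p}(G)$: if $P=\op{O}_{p}(G)$ this is clear; if exactly one factor, say $P\cap A$, escapes $\op{O}_{p}(G)$, then by (a) $P\cap A$ is abelian and centralises $P\cap B$, hence $P\cap A\leqslant\ze{P}$ and a direct expansion of a commutator $[ab,a'b']$ with $a,a'\in P\cap A$ and $b,b'\in P\cap B$ collapses to $[b,b']\in[P\cap B,P\cap B]\leqslant\op{O}_{p}(G)$; if both factors escape, $P$ is abelian by (b). For (d), the forward implication is trivial, and conversely, if $\op{O}_{p}(G)$ is abelian then in the ``at most one factor escapes'' case the other factor's Sylow $p$-subgroup lies in the abelian $\op{O}_{p}(G)$, so by the computation just made $P'=1$, while in the ``both escape'' case $P$ is abelian by (b).

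For part (e), the contrapositive of the first step of (a) shows that the hypothesis $P\cap X\nleqslant\ce{G}{\rad{p}{G}}=D$ forces $P\cap X\leqslant\op{O}_{p}(G)$; then $\op{O}_{p}(G)$ is prefactorised by Remark (3), and $P\cap X=\op{O}_{p}(G)\cap X$. The set $(P\cap X)\smallsetminus D$ is nonempty and generates $P\cap X$ (again because a finite group is not the union of two proper subgroups), and by the dichotomy each of its elements $y$ has $i_{G}(y)$ a power of $p$; hence $\ce{G}{y}$ contains a Hall $p'$-subgroup of $G$, and in particular $\rad{p'}{G}\leqslant\ce{G}{y}$, the $p'$-radical lying inside every Hall $p'$-subgroup. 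It then remains to upgrade ``$\ce{G}{y}$ contains some Hall $p'$-subgroup'' to ``$y$ centralises every Hall $p'$-subgroup''. The plan for this is to pass to $\overline{G}:=G/\rad{p'}{G}$, where $p$-length $1$ makes $\overline{P}:=\op{O}_{p}(\overline{G})$ a normal Sylow $p$-subgroup with $\ce{\overline{G}}{\overline{P}}\leqslant\overline{P}$; using $\rad{p'}{G}\leqslant\ce{G}{y}$ one gets $i_{\overline{G}}(\overline{y})=i_{G}(y)$, a $p$-power, which by comparing the $\overline{P}$-class and the $\overline{G}$-class of $\overline{y}$ (both of the same $p$-power size) yields $\overline{G}=\overline{P}\,\ce{\overline{G}}{\overline{y}}$; I would then combine this with the conjugacy of the complements to $\overline{P}$ together with the extra rigidity coming from $\op{O}_{p}(G)$ being prefactorised and from all $p$-elements of a single factor having index a power of one fixed prime, to force that every complement centralises $\overline{y}$, hence $[y,H]=1$ for all $H\in\hall{p'}{G}$; since these $y$ generate $P\cap X$, the first assertion of (e) follows. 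The ``in particular'' is separate: if $P\cap X$ is non-abelian it must satisfy the hypothesis, for otherwise $P\cap X\leqslant D$, and by (a) a $P\cap X$ escaping $\op{O}_{p}(G)$ is abelian, so $P\cap X\leqslant\op{O}_{p}(G)$, and then $P\cap X\leqslant D=\ce{G}{\op{O}_{p}(G)}$ would make $P\cap X$ centralise itself. I expect this last upgrade — moving from one Hall $p'$-subgroup to all of them — to be the only genuinely delicate point of the whole proposition; everything else is bookkeeping around the dichotomy.
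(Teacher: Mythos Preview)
Your argument for (a) contains a false lemma: it is not true that $|G:\ce{G}{u}\cap\ce{G}{v}|$ divides $i_G(u)\,i_G(v)$, nor even that this intersection has $p'$-index whenever both centralisers do (take $G=S_3$, $p=2$, $u=(12)$, $v=(13)$: both indices are $3$, but the intersection of centralisers is trivial of index $6$). The conclusion $[u,v]=1$ is nonetheless correct, because you are implicitly using that $u,v$ lie in a \emph{common} Sylow $p$-subgroup $P$ together with the $p$-length-$1$ structure you recorded at the outset: in $\overline G:=G/\rad{p'}{G}$ the image $\overline P$ is a normal Sylow $p$-subgroup, so any element of $\overline P$ with $p'$-index in $\overline G$ must lie in $\ze{\overline P}$; hence $[\overline u,\overline v]=\overline 1$ and $[u,v]\in P\cap\rad{p'}{G}=1$. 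The paper argues the same point upstairs: since all Sylow $p$-subgroups are $\rad{p'}{G}$-conjugate, each $x\in(P\cap X)\smallsetminus\op O_p(G)$ lies in some $\ce{G}{P}^t\subseteq\ce{G}{P\cap X}\rad{p'}{G}$, whence $[P\cap X,P\cap X]\leqslant P\cap\rad{p'}{G}=1$. With (a) repaired, your (b)--(d) and the ``in particular'' of (e) go through as written.

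The more serious gap is in the body of (e). You correctly isolate the difficulty --- upgrading ``$\ce{G}{y}$ contains \emph{some} Hall $p'$-subgroup'' to ``$y$ centralises \emph{every} Hall $p'$-subgroup'' --- but your proposed route via $G/\rad{p'}{G}$ does not close it: from $\overline G=\overline P\,\ce{\overline G}{\overline y}$ one only deduces that each complement centralises a $\overline P$-conjugate of $\overline y$, not $\overline y$ itself. The ``extra rigidity'' you invoke, namely that the $p$-elements of a fixed factor all have index a power of a single prime, is Proposition~\ref{UniquePrime}, which is proved \emph{after} the present proposition via Lemma~\ref{theoremPnotCent}, which in turn uses parts (a) and (e); so appealing to it here would be circular. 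The paper's device is to use Corollary~\ref{lemma_p-decompo(2)} instead: the quotient $G/\ce{G}{\rad{p}{G}}$ is $p$-decomposable, hence its Hall $p'$-subgroup is \emph{unique}. Since $y\in\op O_p(G)$ gives $\ce{G}{\rad{p}{G}}\leqslant\ce{G}{y}$, and $|G:\ce{G}{y}|$ is a $p$-power, the image $\ce{G}{y}/\ce{G}{\rad{p}{G}}$ contains the whole $p'$-direct-factor of the quotient; pulling back, every $H\in\hall{p'}{G}$ lies in $\ce{G}{y}$, and your generating-set argument finishes (e).
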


\begin{proof}
(a) Let $x \in (P\cap X)\smallsetminus \rad{p}{G}$. Then $i_G(x)$ is a power of a prime $q\neq p$ by Lemma \ref{wielandt}, so $x\in\ce{G}{\rad{p}{G}}$. Now let $y\in \rad{p}{G}\cap X$. Then $xy \in (P\cap X) \smallsetminus \op{O}_{p}(G)$, so $xy\in\ce{G}{\rad{p}{G}}$ and $y\in\ce{G}{\rad{p}{G}}$. It follows $P\cap X\in\ce{G}{\rad{p}{G}}$ and the first claim is proved.

Now we show that $P\cap X$ is abelian. By Lemma \ref{p-solv} (b), we get $\rad{p'}{G}\neq 1$, and for all $g\in G$ it holds that $P^g \leqslant (P\rad{p'}{G})^g = P\rad{p'}{G}$, so there exists $t_g\in \rad{p'}{G}$ such that $P^g=P^{t_g}$. Let $x\in (P\cap X)\smallsetminus \rad{p}{G}$. By the assumptions and Lemma \ref{wielandt}, we have that $i_G(x)$ is a power of a prime $r\neq p$, so there exists $P^g=P^{t_g}\leqslant\ce{G}{x}$. Hence $x\in\ce{G}{P}\rad{p'}{G}\leqslant\ce{G}{P\cap X}\rad{p'}{G}$. On the other hand, if $x\in \rad{p}{G}\cap X$, then $x\in \ce{G}{P\cap X}$ by the first paragraph. It follows $P\cap X \leqslant \ce{G}{P\cap X}\rad{p'}{G}$, and $[P\cap X, P\cap X]\leqslant P\cap [P\cap X, \rad{p'}{G}][P\cap X, \ce{G}{P\cap X}] \leqslant P\cap \rad{p'}{G} =1.$

Finally, we prove that $[P\cap A, P\cap B]=1$. Take for instance $X=A$. If $P\cap B\leqslant \fit{G}$, then the claim is clear since $P\cap A\leqslant \ce{G}{\rad{p}{G}}$. We may assume $P\cap B\nleqslant\fit{G}$, and so $\rad{p}{G}\leqslant\ce{G}{P\cap B}$. Moreover, as in the previous paragraph, if $x\in (P\cap A)\smallsetminus\rad{p}{G}$, then $x\in \ce{G}{P\cap B}\rad{p'}{G}$. Hence $P\cap A\leqslant \ce{G}{P\cap B}\rad{p'}{G}$, and $[P\cap A, P\cap B]=1$.

(b) It is a direct consequence of (a).

(c) The uniqueness of the Sylow $p$-subgroup of $G/\fit{G}$ follows from Lemma \ref{p-solv} (a). If both $P\cap A\nleqslant \fit{G}$ and $P\cap B\nleqslant \fit{G}$, then $P$ is abelian by (b), and so is $P\fit{G}/\fit{G}$. Finally, if for instance $P\cap A\nleqslant\fit{G}$ and $P\cap B\leqslant \fit{G}$, then $P\cap A$ is abelian by (a) and $P\fit{G}/\fit{G}=(P\cap A)\fit{G}/\fit{G}$ is also abelian. 

(d) Let show that if $P$ is not abelian, then $\rad{p}{G}$ cannot be abelian. Applying (b), we can assume for instance that $P\cap A\leqslant \fit{G}$ and $P\cap B\nleqslant \fit{G}$. Then by (a) we have that $P\cap B$ is abelian and $[P\cap A, P\cap B]=1$. Therefore $P\cap A=\rad{p}{G} \cap A$ cannot be abelian, and so $\rad{p}{G}$ is not abelian either. 

(e) By (a) it holds that $P\cap X=\rad{p}{G}\cap X$. Let $x\in (P\cap X)\smallsetminus\ce{G}{\rad{p}{G}}$. Then $\rad{p}{G}\nleqslant \ce{G}{x}$, and $i_G(x)$ is a power of $p$. Since $x\in \rad{p}{G}$, then $\ce{G}{\rad{p}{G}}\leqslant \ce{G}{x}$, so $\abs{G/\ce{G}{\rad{p}{G}}:\ce{G}{x}/\ce{G}{\rad{p}{G}}} = \abs{G:\ce{G}{x}},$ which is a power of $p$. By Corollary \ref{lemma_p-decompo(2)}, $G/\ce{G}{\rad{p}{G}}$ is $p$-decomposable, and its unique Hall $p'$-subgroup is contained in $\ce{G}{x}/\ce{G}{\rad{p}{G}}$. Thus if $H\in\hall{p'}{G}$, we deduce $H^g\leqslant\ce{G}{x}$ for every $g\in G$, and for all $x\in (P\cap X)\smallsetminus\ce{G}{\rad{p}{G}}$. On the other hand, given $y \in P\cap X\cap \ce{G}{\rad{p}{G}}$, if $x \in (P\cap X)\smallsetminus\ce{G}{\rad{p}{G}}$, then $xy \in (P\cap X)\smallsetminus \ce{G}{\rad{p}{G}}$, so $H^g \leqslant \ce{G}{xy}\cap \ce{G}{x} = \ce{G}{x} \cap \ce{G}{y} \leqslant \ce{G}{y}. $ Therefore $H^g \leqslant \ce{G}{P\cap X}$ for all $g\in G$, and the first claim follows. 

Finally, if $P\cap X$ is non-abelian, then by (a) $P\cap X\leqslant\rad{p}{G}$ and so $P\cap X\nleqslant\ce{G}{\rad{p}{G}}$. 
\end{proof}

\begin{corollary}
\label{corSylAB}
Let $G=AB$ be a $p$-Baer factorisation. If the Sylow $p$-subgroups of $A$ and $B$ are non-abelian, then $G$ is $p$-decomposable.
\end{corollary}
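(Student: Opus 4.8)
The plan is to read the conclusion off Proposition \ref{lemmaPXabelian}. First I would use Lemma \ref{prefact_sylow} to fix a Sylow $p$-subgroup $P=(P\cap A)(P\cap B)\in\syl p G$ with $P\cap A\in\syl p A$ and $P\cap B\in\syl p B$; the hypothesis then says precisely that both $P\cap A$ and $P\cap B$ are non-abelian. This is the one place where a little care is needed: one must work from the outset with a prefactorised Sylow $p$-subgroup whose factors are Sylow in $A$ and in $B$, so that the assumption on the Sylow $p$-subgroups of the factors genuinely transfers to $P\cap A$ and $P\cap B$.

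Next, the first assertion of Proposition \ref{lemmaPXabelian}(a) states that $P\cap X\nleqslant\fit G$ forces $P\cap X$ to be abelian; contrapositively, since $P\cap A$ and $P\cap B$ are non-abelian, we obtain $P\cap A\leqslant\fit G$ and $P\cap B\leqslant\fit G$. As both are $p$-groups, $P\cap A,\,P\cap B\leqslant\rad p{\fit G}=\rad p G$, and hence $P=(P\cap A)(P\cap B)=\rad p G$ is normal in $G$.

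Finally I would invoke Proposition \ref{lemmaPXabelian}(e). Since $G$ is $p$-soluble (Lemma \ref{p-solv}(b)) it possesses a Hall $p'$-subgroup $H$; and because $P\cap A$ and $P\cap B$ are non-abelian, part (e) gives $P\cap A\leqslant\ce G H$ and $P\cap B\leqslant\ce G H$, so $[P,H]=1$. Then $G=PH$ with $P$ normal, $P\cap H=1$ and $[P,H]=1$, whence $G=P\times H=\rad p G\times\rad{p'}{G}$, i.e.\ $G$ is $p$-decomposable. I do not anticipate any real obstacle: the substantive work is already contained in Proposition \ref{lemmaPXabelian}, and this corollary is simply the case in which both factors contribute a non-abelian Sylow $p$-subgroup.
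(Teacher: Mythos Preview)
Your proof is correct and follows essentially the same route as the paper: pick a prefactorised Sylow $p$-subgroup with $P\cap A\in\syl p A$, $P\cap B\in\syl p B$, and apply the final clause of Proposition~\ref{lemmaPXabelian}(e) to both factors. Your intermediate step via part~(a), showing $P=\rad p G$ is normal, is fine but not strictly needed, since once $[P,H]=1$ for some $H\in\hall{p'}{G}$ the normality of $P$ in $G=PH$ is automatic; the paper accordingly just cites the last statement of Proposition~\ref{lemmaPXabelian} and stops.
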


\begin{proof}
It is sufficient to take $P=(P\cap A)(P\cap B)\in\syl{p}{G}$ with $P\cap A\in\syl{p}{A}$ and $P\cap B\in\syl{p}{B}$ and to apply the last statement of the above proposition. 
\end{proof}

\medskip

If we combine the previous results, we get the proof of Theorem \ref{teop-baer}.

\medskip

\begin{proof}[Proof of Theorem \ref{teop-baer}]
The statement (1) is exactly Corollary \ref{lemma_p-decompo(2)}. Lemma \ref{p-solv} yields (2), whilst claims (3), (4) and (5) are Proposition \ref{lemmaPXabelian} (c), (d) and (e), respectively. Corollary \ref{corSylAB} gives the last assertion. 
\end{proof}

\medskip

In the remainder of the section, we focus on proving Theorem \ref{teop-baerprimes}.

\begin{lemma}
\label{theoremPnotCent}
Let $G=AB$ be a $p$-Baer factorisation. Let $P=(P\cap A)(P\cap B)\in\syl{p}{G}$. If for some $X\in\{A, B\}$ it holds $P\nleqslant \ce{G}{\rad{p}{G}\cap X}$, then $\abs{G:\ce{G}{P\cap X}}$ is a $p$-number.
\end{lemma}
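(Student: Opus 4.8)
The plan is to exploit the hypothesis $P\nleqslant \ce{G}{\rad{p}{G}\cap X}$ to locate, inside $P\cap X$, an element that controls the whole argument. First I would pick $y\in \rad{p}{G}\cap X$ with $P\nleqslant \ce{G}{y}$; since $y$ is a $p$-element lying in $A\cup B$ and $i_G(y)$ is a prime power by the $p$-Baer hypothesis, and since $y\in\rad{p}{G}$ forces $\ce{G}{\rad{p}{G}}\leqslant\ce{G}{y}$ while $G/\ce{G}{\rad{p}{G}}$ is $p$-decomposable by Corollary \ref{lemma_p-decompo(2)}, the index $i_G(y)=\abs{G:\ce{G}{y}}$ must be a power of $p$ (a non-trivial index here cannot be a $p'$-number, as a Hall $p'$-subgroup of $G/\ce{G}{\rad{p}{G}}$ would have to centralise the $p$-element $y\ce{G}{\rad{p}{G}}$). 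In particular $\ce{G}{y}$ contains a Hall $p'$-subgroup of $G$, and hence so does $\ce{G}{P\cap X}$ after I upgrade $y$ to all of $P\cap X$ below.

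Next I would show $P\cap X\leqslant\rad{p}{G}$: if $P\cap X\nleqslant\fit{G}$ then Proposition \ref{lemmaPXabelian}(a) would give $P\cap X\leqslant\ce{G}{\rad{p}{G}}$, contradicting the existence of the above $y$; so $P\cap X\leqslant\fit{G}$, i.e. $P\cap X=\rad{p}{G}\cap X$. Now I mimic the argument in the proof of Proposition \ref{lemmaPXabelian}(e): for each $x\in (P\cap X)\smallsetminus\ce{G}{\rad{p}{G}}$ the same reasoning as for $y$ shows $i_G(x)$ is a power of $p$, so every $G$-conjugate of a fixed Hall $p'$-subgroup $H$ lies in $\ce{G}{x}$; and for $x\in P\cap X\cap\ce{G}{\rad{p}{G}}$, multiplying by a suitable element of $(P\cap X)\smallsetminus\ce{G}{\rad{p}{G}}$ (which exists by hypothesis, with the product again outside $\ce{G}{\rad{p}{G}}$ since $P\cap X$ is contained in $\rad{p}{G}$) and intersecting centralisers gives $H^g\leqslant\ce{G}{x}$ as well. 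Hence $H^g\leqslant\ce{G}{P\cap X}$ for every $g\in G$, which means $\ce{G}{P\cap X}$ contains a full Hall $p'$-subgroup of $G$, i.e. $\abs{G:\ce{G}{P\cap X}}$ is a $p$-number.

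The main obstacle I anticipate is handling the elements of $P\cap X$ that \emph{do} centralise $\rad{p}{G}$: for such an $x$ the index $i_G(x)$ need not be a power of $p$, so I cannot argue about it directly and must instead pass to the product $xy$ with $y\in(P\cap X)\smallsetminus\ce{G}{\rad{p}{G}}$. The point to check carefully is that $xy$ still lies outside $\ce{G}{\rad{p}{G}}$ — this uses that $P\cap X\subseteq\rad{p}{G}$, so $x,y\in\rad{p}{G}$ and $xy\in\rad{p}{G}$, and $xy$ centralising $\rad{p}{G}$ together with $x$ doing so would force $y$ to do so too. Once that is in place, the identity $\ce{G}{xy}\cap\ce{G}{x}=\ce{G}{x}\cap\ce{G}{y}$ closes the gap exactly as in Proposition \ref{lemmaPXabelian}(e), and the proof is complete.
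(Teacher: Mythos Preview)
Your overall strategy coincides with the paper's: first force $P\cap X=\rad{p}{G}\cap X$ via Proposition~\ref{lemmaPXabelian}(a), then reduce to the situation of Proposition~\ref{lemmaPXabelian}(e). However, there is a real gap at the point where you say a suitable element of $(P\cap X)\smallsetminus\ce{G}{\rad{p}{G}}$ ``exists by hypothesis.'' The hypothesis is $P\nleqslant\ce{G}{\rad{p}{G}\cap X}$, which is \emph{not} the same as $P\cap X\nleqslant\ce{G}{\rad{p}{G}}$; the former says some element of $P$ fails to centralise $P\cap X$, while the latter says some element of $P\cap X$ fails to centralise $\rad{p}{G}$. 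Your witness $y$ was chosen only with $P\nleqslant\ce{G}{y}$, and nothing so far rules out $y\in\ze{\rad{p}{G}}$ when $P\cap Y\nleqslant\rad{p}{G}$.

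This is exactly the step the paper isolates. From $P\nleqslant\ce{G}{P\cap X}$ one gets either that $P\cap X$ is non-abelian, or that $[P\cap X,P\cap Y]\neq 1$. In the second case Proposition~\ref{lemmaPXabelian}(a) (applied now with $Y$ in place of $X$) forces $P\cap Y\leqslant\fit{G}$, hence $P\cap Y\leqslant\rad{p}{G}$; then $[P\cap X,P\cap Y]\neq 1$ does give $P\cap X\nleqslant\ce{G}{\rad{p}{G}}$. In the first case $P\cap X$ non-abelian with $P\cap X\leqslant\rad{p}{G}$ already gives $P\cap X\nleqslant\ce{G}{\rad{p}{G}}$. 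Once this is established, your hand-rerun of Proposition~\ref{lemmaPXabelian}(e) is fine (and the paper simply cites that proposition). A smaller point: in your reduction to $P\cap X\leqslant\fit{G}$, citing only the conclusion $P\cap X\leqslant\ce{G}{\rad{p}{G}}$ from Proposition~\ref{lemmaPXabelian}(a) is not enough to contradict $P\nleqslant\ce{G}{y}$; you also need $[P\cap A,P\cap B]=1$ from the same proposition to get $P\leqslant\ce{G}{y}$.
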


\begin{proof}
If $P\cap X\nleqslant \fit{G}$, then by Proposition \ref{lemmaPXabelian} (a) we get a contradiction. Therefore $P\cap X=\rad{p}{G}\cap X$. Since $P\nleqslant \ce{G}{\rad{p}{G} \cap X}$, we have that either $P \cap X$ is non-abelian or $[P\cap X, P\cap Y]\neq 1$ where $\{A, B\}=\{X, Y\}$. In the last case, $P\cap Y \leqslant \fit{G}$ by Proposition \ref{lemmaPXabelian} (a), and therefore $P\cap X \nleqslant \ce{G}{\rad{p}{G}}$. Thus we can apply in both cases Proposition \ref{lemmaPXabelian} (e) to deduce $P\cap X\leqslant\ce{G}{H}$, for every $H\in\hall{p'}{G}$. It follows that $\abs{G:\ce{G}{P\cap X}}$ is a power of $p$. 
\end{proof}

\begin{proposition}
\label{UniquePrime}
Let $G=AB$ be a $p$-Baer factorisation, and let $P\in\syl{p}{G}$. Then:

(a) There exist unique primes $q$ and $r$ such that $i_G(x)$ is a $q$-number for every $p$-element $x\in A$, and $i_G(y)$ is an $r$-number for every $p$-element $y\in B$, respectively. (Eventually $p\in\{q, r\}$ or $q=r$.) 
	
(b) $P\leqslant\ce{G}{\rad{\{q, r\}'}{\fit{G}}}$ and $P\rad{q}{G}\rad{r}{G}$ is normal in $G$.
	
(c) If $p\notin \{q, r\}$, then $P$ is abelian.

\end{proposition}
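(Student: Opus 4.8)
The plan is to prove parts (a), (b), (c) in order, working throughout with a Sylow $p$-subgroup $P=(P\cap A)(P\cap B)$ satisfying $P\cap A\in\syl{p}{A}$ and $P\cap B\in\syl{p}{B}$, which exists by Lemma~\ref{prefact_sylow}. Since the subgroups appearing in (b) and (c) are normal in $G$, the conjugacy of Sylow $p$-subgroups (cf.\ the Remark) lets us argue for this particular~$P$, and the hypothesis may be read as: every element of $(P\cap A)\cup(P\cap B)$ has prime power index.

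For (a) the key ingredient is Lemma~\ref{lemmaBK}. Arguing by contradiction, suppose $x,y\in(P\cap A)\smallsetminus\ze{G}$ have indices that are powers of two \emph{distinct} primes. As $P\cap A$ is a $p$-group, $xy$ is again a $p$-element of $A$, so $i_G(xy)$ is a prime power by hypothesis; Lemma~\ref{lemmaBK} then gives $\langle x,y\rangle^G\leqslant\rad{p}{G}$, so in particular $x,y\in\rad{p}{G}$ and hence $i_G(x)=\abs{x^G}$ and $i_G(y)=\abs{y^G}$ both divide $\abs{\rad{p}{G}}$, a $p$-number. As $x,y\notin\ze{G}$, both indices are nontrivial $p$-powers, contradicting that they were powers of distinct primes. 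Therefore all noncentral $p$-elements of $A$ share a common prime $q$ dividing their index (any prime serves if $P\cap A\leqslant\ze{G}$); symmetrically one obtains~$r$ from $B$, and uniqueness is clear whenever the relevant factor is not central in $G$.

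For the first assertion of (b), fix a prime $s\notin\{q,r\}$. For any $p$-element $x\in P\cap A$ the index $i_G(x)$ is a power of $q$, hence coprime to~$s$, so $\ce{G}{x}$ contains a full Sylow $s$-subgroup of $G$ and thus $\rad{s}{G}\leqslant\ce{G}{x}$; the same holds for the $p$-elements of $P\cap B$, with $r$ in place of $q$. Since $P=\langle P\cap A,P\cap B\rangle$ this gives $[P,\rad{s}{G}]=1$, and as $\fit{G}$ is nilpotent we have $\rad{\{q,r\}'}{\fit{G}}=\prod_{s\notin\{q,r\}}\rad{s}{G}$, whence $P\leqslant\ce{G}{\rad{\{q,r\}'}{\fit{G}}}$. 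For the normality of $PK$, with $K:=\rad{q}{G}\rad{r}{G}$, write $\fit{G}=K\times L$ where $L:=\rad{\{q,r\}'}{\fit{G}}$ (again by nilpotency of $\fit{G}$), and recall from Lemma~\ref{p-solv}(a) that $P\fit{G}$ is normal in $G$, with $P$ a Sylow $p$-subgroup of $P\fit{G}$. Given $g\in G$, both $P$ and $P^{g}$ are Sylow $p$-subgroups of $P\fit{G}$, so $P^{g}=P^{h}$ for some $h\in P\fit{G}$; writing $h=\pi k\ell$ with $\pi\in P$, $k\in K$, $\ell\in L$ (using $\fit{G}=KL$) and using that $P$ centralises $L$ (so $[P^{k},L]=[P,L]^{k}=1$, i.e.\ $\ell$ centralises $P^{k}$), one computes $P^{g}=P^{h}=P^{k}\leqslant PK$; hence $(PK)^{g}=P^{g}K=PK$, so $P\rad{q}{G}\rad{r}{G}$ is normal in $G$.

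Finally, (c) is immediate: if $p\notin\{q,r\}$ then $\rad{p}{G}\leqslant\rad{\{q,r\}'}{\fit{G}}$, so the first part of (b) gives $\rad{p}{G}\leqslant P\leqslant\ce{G}{\rad{p}{G}}$, i.e.\ $\rad{p}{G}\leqslant\ze{P}$ is abelian, and Proposition~\ref{lemmaPXabelian}(d) then yields that $P$ is abelian. I expect the main obstacle to be the normality argument in (b): the delicate point is to show that an arbitrary $G$-conjugate of $P$ already lies inside $P\rad{q}{G}\rad{r}{G}$, which one obtains by combining the fact that $P$ centralises the Hall $\{q,r\}'$-subgroup $L$ of $\fit{G}$ with a Sylow-type reduction of the conjugating element inside the normal subgroup $P\fit{G}$; the remaining steps are routine divisibility bookkeeping on indices and centralisers.
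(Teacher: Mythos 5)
Your parts (b) and (c) are correct. The centraliser computation in (b) matches the paper's; your Sylow-conjugation argument for the normality of $P\rad{q}{G}\rad{r}{G}$ (reducing a conjugating element of $G$ into $P\fit{G}$ and then into $\rad{q}{G}\rad{r}{G}$ using $[P,\rad{\{q,r\}'}{\fit{G}}]=1$) is a valid, if longer, alternative to observing that $P\rad{q}{G}\rad{r}{G}$ is a normal Hall $\{p,q,r\}$-subgroup of the normal subgroup $P\fit{G}$ and hence characteristic in it; and your derivation of (c) from (b) via Proposition \ref{lemmaPXabelian} (d) is clean and slightly different from (but equivalent to) the paper's route through Proposition \ref{lemmaPXabelian} (e).

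There is, however, a genuine gap in (a). After invoking Lemma \ref{lemmaBK} you claim that $x,y\in\rad{p}{G}$ forces $i_G(x)=\abs{x^G}$ and $i_G(y)=\abs{y^G}$ to divide $\abs{\rad{p}{G}}$ and hence to be $p$-powers. This is false: a conjugacy class of $G$ contained in a normal subgroup $N$ need not have size dividing $\abs{N}$ (in $\Sigma_3$ the $3$-cycles lie in $\op{O}_3(\Sigma_3)$, of order $3$, yet form a class of size $2$). Indeed, Lemma \ref{lemmaBK} itself concludes only that the \emph{maximum} of $i_G(x)$ and $i_G(y)$ is a $p$-power; the other index remains a nontrivial power of a different prime, consistently with both elements lying in $\rad{p}{G}$, so at this point no contradiction has been reached and your argument stops short. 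The paper closes exactly this gap with Lemma \ref{theoremPnotCent}: if, say, $i_G(x)$ is the maximum, then $x$ is a non-central element of $\rad{p}{G}\cap A$ whose index is divisible by $p$, so $P\nleqslant\ce{G}{\rad{p}{G}\cap A}$, and Lemma \ref{theoremPnotCent} forces $\abs{G:\ce{G}{P\cap A}}$ to be a $p$-number; since $y\in P\cap A$, the index $i_G(y)$ divides this $p$-number while being a power of a prime different from $p$, whence $y\in\ze{G}$ --- the desired contradiction. Some additional input of this kind is indispensable, and your proof as written does not supply it.
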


\begin{proof}
By Lemma \ref{prefact_sylow} we can assume that $P=(P\cap A)(P\cap B)$ where $P\cap A\in \syl{p}{A}$ and $P\cap B\in\syl{p}{B}$:

(a) We argue for instance with $A$. If the result is false, then there exist $p$-elements $a_1, a_2\in A$ such that $1\neq i_G(a_1)$ and $1\neq i_G(a_2)$ are relatively prime. By the conjugacy of the Sylow $p$-subgroups in $A$, we may assume $a_1, a_2 \in P\cap A$, and $i_G(a_1a_2)$ is a prime power. By Lemma \ref{lemmaBK}, $i_G(a_1a_2)=\op{max}\{i_G(a_1), i_G(a_2)\}$ is a $p$-number, and $a_1, a_2\in \rad{p}{G}\cap A$. Let assume that $\op{max}\{i_G(a_1), i_G(a_2)\}=i_G(a_1)$. Hence, since $a_1$ is not central, we get $P\nleqslant \ce{G}{\rad{p}{G}\cap A}$, and Lemma \ref{theoremPnotCent} leads to $a_2\in \ze{G}$, the final contradiction. 

(b) By (a) $\rad{\{q, r\}'}{\fit{G}}\leqslant\ce{G}{P\cap A}\cap\ce{G}{P\cap B}=\ce{G}{P}$. Applying Lemma \ref{p-solv} (a), we deduce that $P\rad{q}{G}\rad{r}{G}$ is normal in $G$.

(c) Let suppose that $P$ is not abelian. Then by Proposition \ref{lemmaPXabelian} (d), $P\cap X\nleqslant \ce{G}{\rad{p}{G}}$ for some $X\in\{A, B\}$. Finally, we deduce from Proposition \ref{lemmaPXabelian} (e) that $\abs{G:\ce{G}{P\cap X}}$ is a $p$-number, and so $p\in\{q, r\}$.
\end{proof}

\begin{example}
\label{examplePbaer}
The primes $q$ and $r$ in the previous result may not be equal. Let $G=A\times B$ be the direct product of a symmetric groups $A=\Sigma_3$ of three letters and a dihedral group $B=D_{10}$ of order ten, and consider the prime $p=2$. Clearly, that factorisation is $2$-Baer. Nevertheless, the $2$-elements $x\in A$ have $i_G(x)=3$ and the $2$-elements $y\in B$ have $i_G(y)=5$. Moreover, if $P\in\syl{2}{G}$, neither $P\rad{3}{G}$ nor $P\rad{5}{G}$ are normal in $G$.
\end{example}

Finally, we are ready to prove Theorem \ref{teop-baerprimes}.

\medskip

\begin{proof}[Proof of Theorem \ref{teop-baerprimes}]
The existence of the unique primes $q$ and $r$ follows from Proposition \ref{UniquePrime} (a). Then, the statement (1) is Lemma \ref{lemma_p-decompo(1)}, and the remaining assertions follow by Proposition \ref{UniquePrime} (b) and (c). 
\end{proof}

\medskip

If we take the trivial factorisation $G=A=B$ in Theorems \ref{teop-baer} (2) and \ref{teop-baerprimes}, we partially recover the main theorem of Camina and Camina in \cite{CC} about $p$-Baer groups.

\begin{corollary}\emph{\cite[Theorem A]{CC}}
\label{theoCC}
Let $G$ be a $p$-Baer group for some prime $p$. Then:

(a) $G$ is $p$-soluble with $p$-length 1, and
	
(b) there is a unique prime $q$ such that each $p$-element has $q$-power index. 
	
Further, let $Q\in\syl{q}{G}$, then
	
(c) if $p=q$, $P$ is a direct factor of $G$, or
	
(d) if $p\neq q$, $P$ is abelian, and $P\rad{q}{G}$ is normal in $G$.

\end{corollary}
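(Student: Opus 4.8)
The plan is to deduce everything by specialising the factorised results to the trivial factorisation $G=A=B$. For part (a), observe that $G=GG$ is trivially a $p$-Baer factorisation, so Lemma \ref{p-solv} (b) (equivalently Theorem \ref{teop-baer} (2)) immediately gives that $P\rad{p'}{G}$ is normal in $G$, hence $G$ is $p$-soluble with $p$-length $1$. For part (b), apply Proposition \ref{UniquePrime} (a) with $A=B=G$: it yields a unique prime $q$ with $i_G(x)$ a $q$-number for every $p$-element $x\in G$ (the two primes $q,r$ of that proposition coincide here since both factors are $G$).

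For the structural parts (c) and (d), I would split according to whether $p=q$ or $p\neq q$. If $p=q$, then every $p$-element of $G$ has $p$-power index, so Lemma \ref{lemma_p-decompo(1)} (with $A=B=G$) tells us $G$ is $p$-decomposable, i.e. $G=\rad{p}{G}\times\rad{p'}{G}$; since $G$ has $p$-length $1$ this forces $P=\rad{p}{G}$ to be a direct factor of $G$, giving (c). If $p\neq q$, then $p\notin\{q,r\}=\{q\}$, so Proposition \ref{UniquePrime} (c) gives that $P$ is abelian. It remains to show $P\rad{q}{G}$ is normal in $G$; this is precisely the content of Proposition \ref{UniquePrime} (b) in the case $q=r$, which states that $P\rad{q}{G}\rad{r}{G}=P\rad{q}{G}$ is normal in $G$. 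This completes (d).

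I do not expect any genuine obstacle here, as the statement is explicitly advertised in the introduction as a specialisation of the factorised theorems; the only care needed is to check that the "eventually $q=r$" clause of Proposition \ref{UniquePrime} is automatically in force when $A=B=G$ (it is, since the defining condition on $A$ and on $B$ is then the same condition on $G$), so that $\rad{q}{G}\rad{r}{G}$ collapses to $\rad{q}{G}$. One should also note in passing that, in the notation of the corollary, $P\in\syl{p}{G}$ is fixed first and $Q\in\syl{q}{G}$ plays no active role beyond naming the prime $q$; the proof never needs $Q$ itself.
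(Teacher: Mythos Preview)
Your proposal is correct and follows exactly the approach the paper itself indicates: specialise the factorised results (Theorem \ref{teop-baer} (2) and Theorem \ref{teop-baerprimes}, i.e.\ Lemma \ref{p-solv} and Proposition \ref{UniquePrime} together with Lemma \ref{lemma_p-decompo(1)}) to the trivial factorisation $G=A=B$. Your observation that $q=r$ automatically when $A=B=G$, so that $P\rad{q}{G}\rad{r}{G}$ collapses to $P\rad{q}{G}$, is precisely the point needed, and your remark that $Q$ plays no real role is accurate.
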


Finally, we emphasize the relations between the primes appearing as indices of the $p$-elements in the factors of a $p$-Baer factorisation.

\begin{lemma}
\label{pqBaer}
Let $G=AB$ be a $p$-Baer factorisation. Let assume that there exist non-central $p$-elements $a\in A$ and $b\in B$, so that $i_G(a)$ is a $q$-number and $i_G(b)$ is an $r$-number, for some primes $q$ and $r$. 

Let assume that $q\neq p$ and the factorisation is also $q$-Baer. Then:

(a) If the $q$-elements in $A\cup B$ have indices an $s$-power, then $s\in \{p, r\}$. 
	
(b) Moreover, if $q=r$ and $s$ is the prime in (a), then $s=p$ and a Hall $\{p, q\}$-subgroup of $G$ is normal with abelian Sylow subgroups. 

\end{lemma}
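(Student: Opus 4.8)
\textbf{Proof plan for Lemma \ref{pqBaer}.}

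The plan is to exploit the symmetry between the primes $p$ and $q$: since the factorisation is simultaneously $p$-Baer and $q$-Baer, the structural results of this section apply with either prime playing the distinguished role, and the conclusions can be played off against each other. First I would set up the basic configuration. By Lemma \ref{prefact_sylow} we may fix $P=(P\cap A)(P\cap B)\in\syl{p}{G}$ and $Q=(Q\cap A)(Q\cap B)\in\syl{q}{G}$ with the factors being Sylow subgroups of $A$ and $B$ respectively; conjugating, we may assume $a\in P\cap A$ and $b\in P\cap B$. Since $q\neq p$ and $i_G(a)$ is a non-trivial $q$-number, Lemma \ref{wielandt} gives $a\notin\rad{p}{G}$, hence by Proposition \ref{lemmaPXabelian}(a) we get $P\cap A\leqslant\ce{G}{\rad{p}{G}}$, $P\cap A$ is abelian, and $[P\cap A,P\cap B]=1$. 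Now $b$ is a non-central $p$-element, so $P\cap B\nleqslant\ze{G}$; and because $[P\cap A,P\cap B]=1$ while $a\notin\rad{p}{G}$, one should be able to argue (as in the proof of Lemma \ref{theoremPnotCent}) that $P\cap B\leqslant\ce{G}{\rad{p}{G}}$ as well unless $P\cap B\leqslant\fit G$, and to pin down that $i_G(b)$ being an $r$-number forces $r\neq p$ into the picture only in the expected way — so in fact $b\in\ce{G}{\rad{p}{G}}$ too and $i_G(b)$ is coprime to $p$, giving $r\neq p$.

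\smallskip

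For part (a), apply Proposition \ref{UniquePrime}(a) to the $q$-Baer factorisation: there is a unique prime $s$ with $i_G(z)$ an $s$-number for every $q$-element $z\in A\cup B$ (here I use that under the $q$-Baer hypothesis \emph{all} $q$-elements of $A\cup B$, not just those in one factor, have $q$-related indices — more precisely Proposition \ref{UniquePrime}(a) gives one prime for $A$ and one for $B$, and I would first check these coincide, or else treat the two primes $s_A,s_B$ separately and show each lies in $\{p,r\}$). To show $s\in\{p,r\}$, suppose $s\notin\{p,r\}$ and derive a contradiction. The element $a\in A$ is a $p$-element with $i_G(a)$ a $q$-number; I want to produce a $q$-element $c$ in the same factor (or the other) whose index is an $s$-number with $s$ coprime to $q$, so that Lemma \ref{wielandt} forces $c\in\rad{q}{G}$, and then use that $a$ normalises or centralises $\rad{q}{G}$ appropriately. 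Concretely: by Proposition \ref{UniquePrime}(b) applied with $q$ in the distinguished role, $Q\rad{s}{G}\cdots$ is normal and $Q$ centralises $\rad{\{s\}'}{\fit G}$; in particular if $p\notin\{s,\dots\}$ then $Q$ centralises the $p$-part of $\fit G$. Combining with the fact that $P\cap A$ is abelian and centralises $\rad{p}{G}$, and that $i_G(a)$ is a $q$-number so $a\in\ce{G}{\rad{q}{G}}$ unless $q\in\{$ primes dividing $i_G(a)\}=\{q\}$ — which is exactly the case — I would instead use Lemma \ref{theoremPnotCent}: since $a\notin\ze G$ is a $p$-element with $i_G(a)$ a $q$-number and $a\in\rad{?}{}$... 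The cleanest route is: $i_G(a)$ a $q$-number with $q\neq p$ means, running the argument of Proposition \ref{UniquePrime}(a) the other way, that $Q\nleqslant\ce{G}{a}$, so $a$ acts nontrivially on $Q$ and hence on $\rad{q}{G}$ (using $q$-length $1$ from Lemma \ref{p-solv}(b) applied to the $q$-Baer factorisation, so $Q\rad{q'}{G}\trianglelefteq G$ and $\rad q{} $ contains the relevant action). Then for a suitable non-central $q$-element $c$ with $i_G(c)$ an $s$-number, $s\notin\{p,r\}$, apply Lemma \ref{lemmaBK} to $a$ (a $p$-element, but also... ) — no: instead apply Lemma \ref{lemmaBK} with the roles of $p$ and $q$ swapped to the pair $c$ and an appropriate $q$-conjugate, forcing $s=q$, contradiction. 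I expect the bookkeeping of \emph{which} prime is distinguished at each invocation to be the main obstacle, and I would organise it by stating once and for all that every lemma of this section holds verbatim with $p$ replaced by $q$.

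\smallskip

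For part (b), now $q=r$, so both $a$ and $b$ are $p$-elements with $i_G(a),i_G(b)$ both $q$-numbers, $q\neq p$. By part (a) the prime $s$ attached to $q$-elements lies in $\{p,q\}=\{p,r\}$. Suppose $s=q$. Then every $q$-element in $A\cup B$ has $q$-power index, so by Lemma \ref{wielandt} every such element lies in $\rad{q}{G}$, whence $Q=(Q\cap A)(Q\cap B)\trianglelefteq G$ and (arguing as in Lemma \ref{lemma_p-decompo(1)}) in fact $G$ is $q$-decomposable, i.e.\ $G=\rad q{G}\times\rad{q'}{G}$. But $a,b$ are $p$-elements with $p\in q'$, so $a,b\in\rad{q'}{G}$, and their $G$-indices divide $|\rad{q'}{G}|$, a $q'$-number; since those indices are $q$-numbers they must be $1$, contradicting non-centrality of $a$. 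Hence $s=p$. It remains to show a Hall $\{p,q\}$-subgroup of $G$ is normal with abelian Sylow subgroups. Normality: by Lemma \ref{p-solv}(b) applied to the $p$-Baer factorisation, $P\rad{p'}{G}\trianglelefteq G$; applied to the $q$-Baer factorisation, $Q\rad{q'}{G}\trianglelefteq G$. Also by Proposition \ref{UniquePrime}(b): the $p$-Baer data give $P\rad q{G}\trianglelefteq G$ (here $q=r$, so $P\rad q{G}\rad r{G}=P\rad q{G}$), and the $q$-Baer data with $s=p$ give $Q\rad p{G}\trianglelefteq G$. From $P\rad q{G}\trianglelefteq G$ and $Q\rad p{G}\trianglelefteq G$, intersecting and using coprimality I would extract that $P Q$... more carefully, $O_{\{p,q\}}(G)$ is the obvious candidate and I would show $G/O_{p'\cap q'}(G)$... — the expected clean statement is that $V:=PQ\rad p{G}\rad q{G}$, which equals $P Q O_{\{p,q\}}(G)$-type object, is a normal Hall $\{p,q\}$-subgroup: its normality follows by combining the four normal subgroups above, and it is a Hall $\{p,q\}$-subgroup because it contains $P\in\syl p{}$ and $Q\in\syl q{}$ and is generated by $\{p,q\}$-elements. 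Abelian Sylow subgroups: $P$ is abelian by Proposition \ref{UniquePrime}(c) for the $p$-Baer factorisation since $p\notin\{q,r\}$; symmetrically $Q$ is abelian by Proposition \ref{UniquePrime}(c) for the $q$-Baer factorisation since $s=p\neq q$ and the second index-prime for $q$-elements is also $p$ (as $q=r$ forces symmetry — I would double-check the analogue of part (a) gives that the single prime for $q$-elements of both $A$ and $B$ is $p$). This finishes the plan; the genuinely delicate points are the swap-of-roles bookkeeping in (a) and correctly identifying the normal Hall $\{p,q\}$-subgroup in (b).
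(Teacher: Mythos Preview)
Your treatment of part (b) is essentially the paper's proof: ruling out $s=q$ via $q$-decomposability (Lemma \ref{lemma_p-decompo(1)}) and the non-centrality of $a$, then using Proposition \ref{UniquePrime}(b) twice (for $p$ and for $q$) to get $P\rad{q}{G}\trianglelefteq G$ and $Q\rad{p}{G}\trianglelefteq G$, whence $PQ=PQ\rad{p}{G}\rad{q}{G}\trianglelefteq G$; and the abelianity of $P$ and $Q$ from Proposition \ref{UniquePrime}(c). That part is fine. Two minor remarks on your setup: you need not argue that $r\neq p$ (it can happen, and the proof never uses it), and the hypothesis of (a) already assumes a \emph{single} prime $s$ for all $q$-elements of $A\cup B$, so there is nothing to check about $s_A$ versus $s_B$.

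Part (a), however, has a genuine gap. You circle through several ideas --- Wielandt applied to a $q$-element with $s$-power index (which does not apply, since $s\neq q$), Proposition \ref{UniquePrime}(b) with $q$ in the distinguished role (which gives $Q\leqslant\ce{G}{\rad{p}{G}}$, useless because $a\notin\rad{p}{G}$), and Lemma \ref{lemmaBK} (which needs two elements of the \emph{same} prime order) --- without completing any of them. The paper's argument is short and uses Proposition \ref{UniquePrime}(b) for the \emph{$p$-Baer} factorisation, not the $q$-Baer one. Assume $s\notin\{p,r\}$. First $s\neq q$, since $s=q$ would make $G$ $q$-decomposable by Lemma \ref{lemma_p-decompo(1)}, contradicting that the $q'$-element $a$ has non-trivial $q$-power index. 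So $s\notin\{p,q,r\}$. Now $N:=P\rad{q}{G}\rad{r}{G}$ is a normal $\{p,q,r\}$-subgroup of $G$ by Proposition \ref{UniquePrime}(b). For any $q$-element $z\in(Q\cap A)\cup(Q\cap B)$, the $N$-orbit of $z$ has size dividing $i_G(z)$, which is an $s$-number coprime to $|N|$; hence $N\leqslant\ce{G}{z}$. Thus $a\in P\cap A\leqslant N\leqslant\ce{G}{Q}$, contradicting that $i_G(a)$ is a non-trivial $q$-number. The key move you missed is to pull the normal $\{p,q,r\}$-subgroup from the $p$-side and let the coprimality of $s$ do the work.
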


\begin{proof}
(a) Take $P=(P\cap A)(P\cap B)\in\syl{p}{G}$ such that $P\cap A\in\syl{p}{A}$ and $P\cap B\in\syl{p}{B}$. We may assume that $a\in P\cap A$ and $b\in P\cap B$. Suppose that $s\neq p$, and we claim that $s=r$. If $s=q$, then by Lemma \ref{lemma_p-decompo(1)} we obtain that $G$ is $q$-decomposable, which contradicts that $1\neq i_G(a)$ is a $q$-number and $a$ is a $q'$-element. Hence, $s\notin\{p, q\}$. Now if we assume also that $s\neq r$, then $\pi(i_G(z))\cap\{p, q, r\}=\emptyset$ for any  $q$-element $z\in A\cup B$. Since $P\rad{q}{G}\rad{r}{G}$ is a normal $\{p, q, r\}$-subgroup of $G$ by Proposition \ref{UniquePrime} (b), given $Q=(Q\cap A)(Q\cap B)\in\syl{q}{G}$ it follows $P\cap A\leqslant P\rad{q}{G}\rad{r}{G}\leqslant \ce{G}{Q\cap A}\cap \ce{G}{Q\cap B}=\ce{G}{Q}.$ But this contradicts again that $i_G(a)\neq 1$ is a $q$-number.

(b) By (a), we deduce $s\in \{p, r\}$. As above, since $q=r$ we get $s=p$ because of Lemma \ref{lemma_p-decompo(1)}. As a consequence, Proposition  \ref{UniquePrime} (b) yields that $P\rad{q}{G}$ and $Q\rad{p}{G}$ are normal in $G$, for $P\in\syl{p}{G}$ and $Q\in \syl{q}{G}$. Hence $PQ\rad{q}{G}\rad{p}{G}=PQ\unlhd G,$ and it is a Hall $\{p, q\}$-subgroup of $G$. The abelianity of the Sylow subgroups of $PQ$ follows from Proposition \ref{UniquePrime} (c). 
\end{proof}

\medskip

If we choose the trivial factorisation $G=A=B$ in the above result, we recover:

\begin{corollary}\emph{\cite[Lemma 5]{CC}}
Let $G$ be a $p$-Baer group and a $q$-Baer group for primes $p\neq q$. Suppose that all $p$-elements have $q$-power index. Then all $q$-elements have $p$-power index.
\end{corollary}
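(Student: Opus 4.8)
The plan is to deduce the corollary from Lemma \ref{pqBaer} by specialising to the trivial factorisation $G=A=B$. Since $G$ is both a $p$-Baer group and a $q$-Baer group, the factorisation $G=G\cdot G$ is simultaneously a $p$-Baer factorisation and a $q$-Baer factorisation. As is already built into the hypotheses of Lemma \ref{pqBaer}, the assertion only carries content when some $p$-element is non-central (if every $p$-element lies in $\ze{G}$ then ``all $p$-elements have $q$-power index'' is automatic for every prime, so the hypothesis is empty), so I would begin by fixing a non-central $p$-element $a\in G$ and then simply set $b:=a$, so that $i_G(a)=i_G(b)>1$.

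Next I would match up the primes. By hypothesis every $p$-element has index a power of $q$, hence $i_G(a)$ is a non-trivial power of $q$; reading $a$ as an element of the factor $A$ and $b=a$ as an element of the factor $B$, this shows that the prime called ``$q$'' in Lemma \ref{pqBaer} and the prime called ``$r$'' there both coincide with $q$, i.e. $q=r$. Since $p\neq q$ and $G=G\cdot G$ is $q$-Baer, the running hypotheses of Lemma \ref{pqBaer} are in force. To supply the input to its part (a) I would apply Corollary \ref{theoCC} to the $q$-Baer group $G$: this gives a unique prime $s$ such that every $q$-element of $G$ has $s$-power index, and in particular all $q$-elements of $A\cup B=G$ have index an $s$-power.

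With this in hand, Lemma \ref{pqBaer}(a) yields $s\in\{p,r\}=\{p,q\}$, and then Lemma \ref{pqBaer}(b) --- whose extra hypothesis $q=r$ I have just verified --- forces $s=p$. Therefore every $q$-element of $G$ has $p$-power index, which is exactly the conclusion. I expect no computational difficulty here, since the substantive work is already contained in Lemma \ref{pqBaer} and Corollary \ref{theoCC}. The only step that needs a moment's care, and hence the main (small) obstacle, is the bookkeeping at the start: noticing that a single non-central $p$-element can simultaneously play the roles of $a\in A$ and $b\in B$, and that the hypothesis ``all $p$-elements have $q$-power index'' genuinely collapses the two a priori distinct primes of Lemma \ref{pqBaer} to $q=r$, so that precisely the case $q=r$ of part (b) becomes applicable.
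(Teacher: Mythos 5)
Your proof is correct and follows exactly the route the paper intends: specialise Lemma \ref{pqBaer} to the trivial factorisation $G=A=B$, take $b=a$ a non-central $p$-element so that the two primes of that lemma collapse to $q=r$, feed in the unique prime $s$ supplied by Corollary \ref{theoCC}\,(b) for the $q$-Baer group $G$, and conclude $s=p$ from Lemma \ref{pqBaer}\,(b). The only caveat --- shared with the paper's own one-line derivation --- concerns the degenerate case in which every $p$-element lies in $\ze{G}$ (excluded by the hypotheses of Lemma \ref{pqBaer}); your remark that the statement then carries no content is not literally a proof of the conclusion in that case, but the paper glosses over this point in exactly the same way.
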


%%%%%%%%%%%%%%%%%%%%%%%%%%%%%%%%%%%%%%%%%%%%%%%%%%%%%%%%%%%%%%%%%%%%%%%%%%%%%%%%%%%%%%%%%%

\section{Groups with a Baer factorisation}
\label{sec_allp}

In the sequel, the prime power index condition is imposed on all prime power order elements in the factors, that is, we consider Baer factorisations. 

We start this section by proving Baer's theorem (\cite{B}) as a consequence of the results obtained in Section \ref{sec_p}, when we consider the trivial factorisation $G=A=B$.

\begin{theorem}\emph{\cite[Section 3 - Theorem]{B}}
\label{teoBAER}
Let $G$ be a finite group. Each element $x\in G$ of prime power order has prime power index if and only if $$G=G_1 \times G_2 \times \cdots \times G_r,$$ where $G_i$ and $G_j$ have relatively prime orders for $i\neq j$, and if $G_i$ is not of prime order, then $\abs{\pi(G_i)}=2$ and its Sylow subgroups are abelian.
\end{theorem}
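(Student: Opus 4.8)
The plan is to establish both implications. The ``if'' direction is a short direct computation, while the ``only if'' direction is obtained by specialising the results of Section~\ref{sec_p} to the trivial factorisation $G=A=B$: a Baer group is in particular a $p$-Baer group for every prime $p$, so Corollary~\ref{theoCC} (Camina--Camina's Theorem~A) is available for each $p$, and the two corollaries following it, together with Lemma~\ref{pqBaer}, describe how the relevant primes fit together.

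For the ``if'' direction I would take $x\in G$ of order a power of a prime $p$. As the $|G_i|$ are pairwise coprime, $p$ divides at most one of them, say $|G_j|$, and the components of $x$ in the other factors are trivial; hence $x\in G_j$, $\ce{G}{x}=\bigl(\prod_{i\neq j}G_i\bigr)\times\ce{G_j}{x}$, and $i_G(x)=i_{G_j}(x)$. If $G_j$ has prime power order it is a $p$-group and $i_{G_j}(x)$ is a power of $p$; otherwise $\pi(G_j)=\{p,q\}$ with abelian Sylow subgroups, so a Sylow $p$-subgroup of $G_j$ containing $x$ centralises $x$ and $i_{G_j}(x)$ is a power of $q$. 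In either case $i_G(x)$ is a prime power.

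For the ``only if'' direction I would assume $G$ is a Baer group and set $\pi_1=\{\,p\in\pi(G): \text{every }p\text{-element of }G\text{ has }p\text{-power index}\,\}$ and $\pi_2=\pi(G)\smallsetminus\pi_1$. If $p\in\pi_1$, then Wielandt's Lemma~\ref{wielandt} places every $p$-element in $\rad{p}{G}$, so a Sylow $p$-subgroup $P_p=\rad{p}{G}$ is normal, and Corollary~\ref{theoCC}(c) (with the index-prime taken to be $p$) shows $P_p$ is a direct factor of $G$. If $p\in\pi_2$, Corollary~\ref{theoCC} yields a unique prime $q=q(p)\neq p$ with all $p$-elements of $q$-power index; the corollary following Lemma~\ref{pqBaer} then gives that all $q$-elements have $p$-power index, and a brief check shows $q\in\pi_2$ as well --- were $q\in\pi_1$, all $q$-elements would have index simultaneously a power of $q$ and of $p$, hence be central, so $G$ would split off the Sylow $q$-subgroup and every $p$-element would be central, contradicting $p\in\pi_2$. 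Uniqueness of the assignment $p\mapsto q(p)$ then makes it a fixed-point-free involution of $\pi_2$, so $\pi_2$ splits into pairs $\{p,q\}$, and for each such pair Lemma~\ref{pqBaer}(b), applied with $A=B=G$, gives a normal Hall $\{p,q\}$-subgroup $H_{p,q}$ of $G$ with abelian Sylow subgroups.

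To finish I would assemble the pieces: the subgroups $P_p$ $(p\in\pi_1)$ and $H_{p,q}$ (one per pair in $\pi_2$) are normal in $G$, have pairwise coprime orders, and the product of their orders equals $|G|$; since normal subgroups of coprime orders intersect trivially and centralise one another, $G$ is their internal direct product, and relabelling them as $G_1,\dots,G_r$ yields the stated factorisation, each $G_i$ being a $p$-group or a $\{p,q\}$-group with abelian Sylow subgroups (so that ``prime order'' in the statement should be read as ``prime power order''). The genuine group-theoretic input is already carried by the cited results, so the step I expect to require the most care is the bookkeeping itself: partitioning $\pi(G)$ correctly and handling cleanly the degenerate situations in which a Sylow subgroup of $G$ is central, so that each prime is accounted for exactly once.
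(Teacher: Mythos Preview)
Your proof is correct and follows essentially the same route as the paper's. The only difference is organisational: the paper splits the primes according to whether the Sylow $p$-subgroup is non-abelian (direct factor via Corollary~\ref{corSylAB}) versus abelian non-central (paired via Lemma~\ref{pqBaer}(b)), whereas you split according to whether $p\in\pi_1$ or $p\in\pi_2$; since an abelian Sylow $p$-subgroup with all $p$-elements of $p$-power index is necessarily central, these two partitions coincide, and both arguments conclude by assembling the resulting coprime normal Hall subgroups into a direct product.
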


\begin{proof}
The converse is clear. Let $P\in\syl{p}{G}$. If $P$ is not abelian, then $P$ is a direct factor of $G$ by Corollary \ref{corSylAB}. Therefore, all non-abelian Sylow subgroups of $G$ are direct factors of it.

Now suppose that $P$ is abelian and non-central in $G$. Hence there is a $p$-element $x\in G$ such that $1\neq i_G(x)$ is a $q$-number, for some prime $q\neq p$. Necessarily, by Lemma \ref{lemma_p-decompo(1)}, there is a $q$-element $y\in G$ such that $1\neq i_G(y)$ is a $q'$-number. For some $Q\in\syl{q}{G}$, Lemma \ref{pqBaer} (b) yields that $PQ$ is a normal Hall $\{p, q\}$-subgroup of $G$, and $Q$ is also abelian.

Take a prime $r\notin \{p, q\}$ and $R\in\syl{r}{G}$. We may assume that $R$ is abelian and non-central. Thus, for each element $z\in R$, we deduce $\pi(i_G(z))\cap \{p, q\}=\emptyset$ by virtue of Lemma \ref{pqBaer} (b) again. Consequently $PQ\leqslant\ce{G}{R}$. Since this is valid for all primes $r\notin\{p, q\}$, the $\{p, q\}$-decomposability of $G$ follows. The result is now established. 
\end{proof}

\begin{remark}
The results stated in Section \ref{sec_p} can be also used to give an alternative proof of \cite[Theorem 2]{CH} due to Chillag and Herzog, avoiding Theorem \ref{teoBAER}.
\end{remark}

\medskip

\begin{proof}[Proof of Corollary \ref{teoNA}]
We deduce the first two statements from a direct application of Theorem \ref{teop-baer} (3) and (4), respectively. The final two assertions follow from Theorem \ref{teop-baer} (6). 
\end{proof}

\medskip

Next we are proving that the factors of a Baer factorisation are Baer groups. This is because in such a factorisation $G=AB$, the prime power index condition is inherited by both factors, even if $A$ and $B$ are not subnormal in $G$. This is no longer true for other arithmetical conditions on the indices (see for instance \cite{FMOsurvey} for the square-free property). In particular, as pointed out in \cite{BK}, subgroups of Baer groups are also Baer groups.  It is an open question whether the factors of a $p$-Baer factorisation are $p$-Baer groups. Nevertheless, it might happen for such a group that the indices in a factor and in the whole group are powers of distinct primes (see Final examples (2)).

\medskip

\begin{proof}[Proof of Proposition \ref{inheritstructure}]
Let $P=(P\cap A)(P\cap B)\in\syl{p}{G}$ such that $P\cap A\in\syl{p}{A}$ and $P\cap B\in\syl{p}{B}$, for some prime $p$. Let $X\in\{A, B\}$ and take $x\in (P\cap X)\smallsetminus\ze{G}$ such that $i_G(x)$ is a $q$-number. If $q=p$, then Proposition \ref{UniquePrime} (a) and Lemma \ref{wielandt} yields $P\cap X=\rad{p}{G}\cap X$. Moreover, $P\nleqslant \ce{G}{\rad{p}{G}\cap X}$ because $1\neq i_G(x)$ is a $p$-power. Thus, by virtue of Lemma \ref{theoremPnotCent}, we deduce that $\abs{G:\ce{G}{P\cap X}}$ is a $p$-power. As $P\cap X$ is normal in $X$, then $\abs{X:\ce{X}{P\cap X}}=\abs{X\ce{G}{P\cap X}:\ce{G}{P\cap X}}$ divides $\abs{G:\ce{G}{P\cap X}}$. Therefore $i_X(x)$ divides the $p$-number $\abs{G:\ce{G}{P\cap X}}$.

Hence we may assume $q\neq p$. Suppose $\rad{q}{G}\neq 1$. Note that the quotient $\overline{G}:=G/\rad{q}{G}$ inherits the hypotheses. It follows by induction that $i_{\overline{X}}(\overline{x}) = \abs{\overline{X}:\ce{\overline{X}}{\overline{x}}}$ is a $q$-number, because $i_{\overline{G}}(\overline{x})$ divides $i_G(x)$. However, since $q\neq p$, applying \cite[3.2.8]{KS} and the isomorphism $\overline{X}\cong X/(X\cap \rad{q}{G})$ we deduce $\ce{\overline{X}}{\overline{x}}=\overline{\ce{X}{x}}$. Thus $\abs{\overline{X}:\ce{\overline{X}}{\overline{x}}}\cdot \abs{(X\cap\rad{q}{G}):(\ce{X}{x}\cap\rad{q}{G})}=\abs{X:\ce{X}{x}},$ which is also a $q$-power, and the result is proved in this case.  

Now we assume $\fit{G}=\rad{q'}{\fit{G}}$. Let $M:=X\fit{G}$ which is normal in $G$ by Corollary \ref{teoNA} (1). Then $M = X(M\cap Y)$ with $\{X, Y\}=\{A, B\}$, and $M$ verifies the hypotheses. If $M<G$, by induction we get that $i_X(x)$ is a power of the same prime that divides $i_M(x)$, which divides $i_G(x)$. Consequently we may assume $G=M=X\rad{q'}{\fit{G}}$. Let $G_{q'}\in\hall{q'}{G}$. Then $G_{q'}=\rad{q'}{\fit{G}}(X\cap G_{q'})$. Moreover, $\abs{G:G_{q'}} =\abs{X\rad{q'}{\fit{G}}:(X\cap G_{q'})\rad{q'}{\fit{G}}} = \abs{X:X\cap G_{q'}}$. Therefore, for each $G_{q'}\in\hall{q'}{G}$, we have that $X\cap G_{q'}$ is also a Hall $q'$-subgroup of $X$. Since $i_G(x)$ is a $q$-number, there exists some Hall $q'$-subgroup of $G$ that centralises $x$, and so there exists a Hall $q'$-subgroup of $X$ that centralises $x$, and we are done. 
\end{proof}

\begin{example}
\label{counterBaer}

(i) In contrast to Baer's theorem (Theorem \ref{teoBAER}), and in spite of the above proposition, in a Baer factorisation $G=AB$ it is not guaranteed that $G$ is a direct product of proper Hall subgroups  for pairwise disjoint sets of primes, even for direct products: To see this consider $A=C_3 \times [C_7]C_2 \times [C_{11}]C_5$ and $B=C_5 \times [C_7]C_3 \times [C_{11}]C_2$. Then $G=A\times B$ is a Baer factorisation, but there are no pairwise coprime proper direct factors of $G$.
	
(ii) We highlight that there are Baer factorisations which are not just a central product of Baer groups: Let $G=H \times K$ be the direct product of a symmetric group $H=\Sigma_3$ and a dihedral group $K=D_{10}$. Let $A$ be a Sylow $2$-subgroup of $K$, and let $B$ be the direct product of $H$ and the Sylow $5$-subgroup of $K$. Then $G=AB$ is a Baer factorisation. Note that there is a $2$-element $g \in G \smallsetminus (A \cup B)$ such that $i_G(g)=15$, so $G$ is not a Baer group.
\end{example}

Now, as a step to prove Theorem \ref{teoallp}, an application of Lemma \ref{lemma_p-decompo(1)} gives the next result.

\begin{corollary}
\label{lemma_p-decompoallp1}
Let $G=AB$ be the product of the subgroups $A$ and $B$, and let $p$ be a prime. Then $i_G(x)$ is a $p$-number for each prime power order element $x\in A\cup B$ if and only if $G=\op{O}_{p}(G) \times \op{O}_{p'}(G)$, and $\op{O}_{p'}(G)$ is abelian.
\end{corollary}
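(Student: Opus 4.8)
The plan is to prove the two implications of the equivalence separately, with the bulk of the work lying in the forward direction, where the two main tools will be Lemma~\ref{lemma_p-decompo(1)} and Lemma~\ref{prefact_sylow}. So suppose first that $i_G(x)$ is a $p$-number for every prime power order element $x\in A\cup B$. Applying this hypothesis to the $p$-elements of $A\cup B$ alone, Lemma~\ref{lemma_p-decompo(1)} gives $G=\rad{p}{G}\times\rad{p'}{G}$, and it remains to show that $N:=\rad{p'}{G}$ is abelian. First I would note that, projecting onto the direct factor $\rad{p}{G}$, every $p'$-subgroup of $G$ must lie in $N$. Now take any prime $q\neq p$ and any $q$-element $x\in A\cup B$: then $x\in N$ and $\rad{p}{G}\leqslant\ce{G}{x}$, so $\ce{G}{x}=\rad{p}{G}\times\ce{N}{x}$ and hence $i_G(x)=\abs{N:\ce{N}{x}}$ divides $\abs{N}$. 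Since by hypothesis $i_G(x)$ is a $p$-number while $\abs{N}$ is a $p'$-number, this forces $i_G(x)=1$, that is, $x\in\ze{G}$.

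To conclude that $N$ is abelian, for each prime $q\in\pi(N)$ (so $q\neq p$) I would use Lemma~\ref{prefact_sylow} to pick $Q\in\syl{q}{G}$ with $Q=(Q\cap A)(Q\cap B)$. Being a $p'$-group, $Q$ is contained in $N$ and is therefore a Sylow $q$-subgroup of $N$; moreover every element of $(Q\cap A)\cup(Q\cap B)$ is a $q$-element of $A\cup B$, hence central in $G$ by the previous step, so $Q=(Q\cap A)(Q\cap B)\leqslant\ze{G}$. Thus $Q$ is the unique (normal) Sylow $q$-subgroup of $N$, and it is central; letting $q$ range over $\pi(N)$ shows that every prime power order element of $N$ is central in $N$, so, as $N$ is generated by such elements, $N$ is abelian. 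I expect this passage --- from the centrality of the $q$-elements of $A\cup B$ to the abelianity of $N$ --- to be the delicate point, precisely because $N=\rad{p'}{G}$ need not itself be prefactorised, so one is forced to import prefactorised Sylow subgroups of the whole group $G$ into $N$ rather than to argue directly inside $N$.

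For the converse, I would assume $G=\rad{p}{G}\times\rad{p'}{G}$ with $\rad{p'}{G}$ abelian, and let $x\in A\cup B$ have prime power order. If $x$ is a $p$-element then $x\in\rad{p}{G}$, so $\ce{G}{x}=\ce{\rad{p}{G}}{x}\times\rad{p'}{G}$ and $i_G(x)=\abs{\rad{p}{G}:\ce{\rad{p}{G}}{x}}$ is a $p$-number. If instead $x$ is a $q$-element with $q\neq p$, then $\langle x\rangle$ is a $p'$-subgroup, so $x\in\rad{p'}{G}$; as $\rad{p'}{G}$ is abelian and is centralised by $\rad{p}{G}$, we get $\rad{p'}{G}\leqslant\ze{G}$, whence $i_G(x)=1$. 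In either case $i_G(x)$ is a $p$-number, which finishes the argument.
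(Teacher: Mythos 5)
Your proof is correct and follows essentially the same route as the paper's: Lemma \ref{lemma_p-decompo(1)} gives the $p$-decomposition, and then a prefactorised Sylow $q$-subgroup $Q=(Q\cap A)(Q\cap B)$ from Lemma \ref{prefact_sylow} is shown to be central for each $q\neq p$, forcing $\rad{p'}{G}\leqslant\ze{G}$. Your treatment is just a slightly more detailed write-up of the same argument.
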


\begin{proof}
The sufficient condition is straightforward. The $p$-decomposability of $G$ follows directly from Lemma \ref{lemma_p-decompo(1)}. Finally, if we take a prime $q\neq p$ and a prefactorised Sylow $q$-subgroup $Q=(Q\cap A)(Q\cap B)$, then $i_{\op{O}_{p'}(G)}(x)=1$ for each element $x\in (Q\cap A)\cup (Q\cap B)$. Therefore $Q\leqslant\ze{G}$ for every $q\neq p$, and the result follows. 
\end{proof}

\medskip

Furthermore, from the previous corollary we get:

\begin{corollary}
\label{lemma_p-decompoallp2}
Let $G=AB$ be the Baer factorisation. Then for each prime $p$ we have that $G/\ce{G}{\rad{p}{G}}$ is $p$-decomposable with abelian $p$-complement.
\end{corollary}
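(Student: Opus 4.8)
Fix a prime $p$, put $C:=\ce{G}{\rad{p}{G}}$ and $\overline{G}:=G/C$. By Theorem~\ref{teop-baer}(1) (equivalently Corollary~\ref{lemma_p-decompo(2)}) the group $\overline{G}$ is $p$-decomposable, so write $\overline{G}=\overline{O}\times K$ with $\overline{O}:=\rad{p}{\overline{G}}\in\syl{p}{\overline{G}}$ and $K:=\rad{p'}{\overline{G}}$ its unique $p$-complement; what remains is to prove that $K$ is abelian, and this is exactly where the hypothesis ``Baer for every prime'' is used. First I would record that $\overline{G}=\overline{A}\,\overline{B}$, and hence every quotient of it, is again a Baer factorisation: indices of images divide indices in $G$ by Lemma~\ref{inherithyp}(b), and any prime power order element of $\overline{A}$ (resp.\ $\overline{B}$) is the image of one of the same order in $A$ (resp.\ $B$) by Lemma~\ref{inherithyp}(c) applied to $\overline{A}\cong A/(A\cap C)$; in particular $\overline{A}$ and $\overline{B}$ are Baer groups by Proposition~\ref{inheritstructure}.

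Next I would pin down how the elements of the factors sit in $\overline{G}$. For a $q$-element $\overline{a}$ of $\overline{A}\cup\overline{B}$ with $q\neq p$ one has $\overline{a}\in K$, and since $\overline{G}=\overline{O}\times K$ the centraliser $\ce{\overline{G}}{\overline{a}}$ contains $\overline{O}$; hence $i_{\overline{G}}(\overline{a})$ divides $\abs{\overline{G}:\overline{O}}=\abs{K}$ and, dividing also $i_G(a)$, is a prime power which is a $p'$-number. Dually, a non-trivial $p$-element of $\overline{A}\cup\overline{B}$ lies in $\overline{O}$ and has $p$-power index in $\overline{G}$; consequently a $q$-element of $A\cup B$ with $q\neq p$ whose index in $G$ happens to be a $p$-power becomes central in $\overline{G}$ (its index in $\overline{G}$ divides both a $p$-power and a $p'$-number). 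Combining this with Proposition~\ref{UniquePrime} applied to each $\ell$-Baer factorisation $\overline{G}$ with $\ell\neq p$ --- which yields unique $p'$-primes $s_\ell,t_\ell$ governing the $\ell$-elements of $\overline{A}$, resp.\ $\overline{B}$ --- the plan is to show that all these primes coincide with a single prime $s\neq p$. For this I would combine the fact that $\overline{G}$, and so $K$, acts \emph{faithfully} on the $p$-group $\rad{p}{G}$ (by the very definition of $C$, and coprimely since $K$ is a $p'$-group) with the known structure of the Baer groups $\overline{A}$, $\overline{B}$ (Theorem~\ref{teoBAER}) and with Lemmas~\ref{lemmaBK} and~\ref{pqBaer} applied to the various $\ell$-Baer factorisations $\overline{G}$; the occurrence of two distinct primes among the $s_\ell,t_\ell$ would force a non-abelian Sylow subgroup inside $K$, and the faithful coprime action of $K$ on $\rad{p}{G}$ --- constrained by the Baer condition on the generators of $\rad{p}{G}$ lying in $A\cup B$ --- is what forbids it. The case that $\rad{p}{G}$ is not prefactorised is disposed of as usual, via the Remark following Lemma~\ref{prefact_sylow} together with Proposition~\ref{lemmaPXabelian}(b), which in the outstanding case makes a Sylow $p$-subgroup of $G$ abelian.

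Granting the single prime $s$, every prime power order element of the two factors of the factorised group $K\cong\overline{G}/\overline{O}$ has $s$-power index in $K$ (it divides the corresponding index in $\overline{G}$, which is an $s$-power by the previous step), so Corollary~\ref{lemma_p-decompoallp1} applies to $K$ with the prime $s$ and gives $K=\rad{s}{K}\times\rad{s'}{K}$ with $\rad{s'}{K}$ abelian. If $s\nmid\abs{K}$ this already says $K$ is abelian; otherwise $\rad{s}{K}$ is a Sylow $s$-subgroup of $K$ (hence of $\overline{G}$, since $\overline{O}$ is a $p$-group), and a final application of Proposition~\ref{lemmaPXabelian}(d) to the $s$-Baer factorisation $\overline{G}$ forces $\rad{s}{K}$ abelian too. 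Thus $K$ is abelian and $\overline{G}=\overline{O}\times K$ is $p$-decomposable with abelian $p$-complement, as claimed.

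The step I expect to be the genuine obstacle is the collapse to a single prime $s$: one must propagate the prime power index information available only for the relatively few elements of $\rad{p}{G}$ that happen to lie in the factors $A$ and $B$ into a uniform statement about the whole $p'$-part of $G/\ce{G}{\rad{p}{G}}$, and do so without $\rad{p}{G}$ being prefactorised --- precisely the point where the factorisation hypothesis and the all-primes Baer hypothesis have to be used in combination. Everything else is bookkeeping with Lemma~\ref{inherithyp} and the $p$-decomposability already established in Theorem~\ref{teop-baer}(1).
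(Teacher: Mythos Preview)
You have overlooked the one observation that makes this corollary an immediate consequence of Corollary~\ref{lemma_p-decompoallp1}, exactly as Corollary~\ref{lemma_p-decompo(2)} follows from Lemma~\ref{lemma_p-decompo(1)}. Namely: if $\overline{x}\in\overline{A}\cup\overline{B}$ is a \emph{non-trivial} element of prime power order (for \emph{any} prime), then its lift $x\in A\cup B$ satisfies $x\notin\ce{G}{\rad{p}{G}}$, so $\rad{p}{G}\nleqslant\ce{G}{x}$ and hence $p$ divides $i_G(x)$. Since $i_G(x)$ is a prime power by the Baer hypothesis, it is a $p$-power, and therefore $i_{\overline{G}}(\overline{x})$ is a $p$-power as well. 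Now Corollary~\ref{lemma_p-decompoallp1} applied to $\overline{G}=\overline{A}\,\overline{B}$ gives directly that $\overline{G}$ is $p$-decomposable with abelian $p$-complement. That is the whole proof.

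You come tantalisingly close to this: you note that a $q$-element of $A\cup B$ ``whose index in $G$ happens to be a $p$-power'' becomes central in $\overline{G}$, but you treat this as a special case rather than realising that \emph{every} $q$-element ($q\neq p$) surviving to a non-trivial element of $\overline{G}$ automatically satisfies it. Because you miss this, you embark on an elaborate programme to collapse the primes $s_\ell,t_\ell$ to a single prime $s$, a step you yourself flag as ``the genuine obstacle'' and for which you give only a sketch invoking faithful coprime actions, Lemma~\ref{lemmaBK}, Lemma~\ref{pqBaer}, and the structure of Baer groups. None of that is needed, and as written it is not a proof: the mechanism by which ``two distinct primes would force a non-abelian Sylow subgroup inside $K$'' is asserted but not established. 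So your proposal has a real gap precisely at the point you identify, while the intended argument bypasses that point entirely.
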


In the remainder of the section, we focus on proving Theorem \ref{teoallp}.

\begin{proposition}
\label{theoremPCent}
Let $G=AB$ be a Baer factorisation, and let $P\in\syl{p}{G}$. If $P\leqslant \ce{G}{\rad{p}{G}\cap X}$ for some $X\in\{A, B\}$, then $\ce{G}{\rad{p}{G}\cap X}$ is normal in $G$ and $G/\ce{G}{\rad{p}{G}\cap X}$ is an abelian $q$-group, for a prime $q\neq p$.
\end{proposition}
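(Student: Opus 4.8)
The plan is to work inside $\overline{G} := G/\ce{G}{\rad{p}{G}\cap X}$ and show it is an abelian $q$-group for a single prime $q \ne p$. First I would record the basic setup: let $C := \ce{G}{\rad{p}{G}\cap X}$, and observe that $C$ is certainly normal in $G$ because $\rad{p}{G}\cap X$ is, in fact, a characteristic subgroup of a prefactorised normal section; more carefully, by Lemma \ref{prefact_sylow} pick $P = (P\cap A)(P\cap B) \in \syl{p}{G}$ with $P\cap A \in \syl{p}{A}$, $P\cap B \in \syl{p}{B}$, and note that if $P\cap X \nleqslant \fit{G}$ then Proposition \ref{lemmaPXabelian}(a) forces $P\cap X \leqslant \ce{G}{\rad{p}{G}}$, so $\rad{p}{G}\cap X \leqslant \ze{P\cap X} \leqslant \ce{G}{P\cap X}$, and combined with the hypothesis $P \leqslant C$ this case is degenerate; hence the interesting case is $P\cap X = \rad{p}{G}\cap X$, a normal subgroup of $G$, whence $C = \ce{G}{\rad{p}{G}\cap X}$ is normal in $G$ automatically. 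So normality of $C$ is the easy part.

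Next, to get that $\overline{G}$ is a $q$-group for a single prime $q \ne p$: since $P \leqslant C$, the quotient $\overline{G}$ is a $p'$-group, so it suffices to show it is a $p'$-number with only one prime divisor. Here I would use that $G = AB$ is a \emph{Baer} factorisation, hence in particular $p$-Baer for the relevant primes, and invoke Theorem \ref{teop-baerprimes}: there are unique primes $q$, $r$ such that every $p$-element of $A$ has $q$-power index and every $p$-element of $B$ has $r$-power index in $G$. Apply this to elements $x \in (P\cap A)\cup(P\cap B)$ whose centraliser does not contain $\rad{p}{G}\cap X$; for such $x$, the index $i_G(x)$ is a $q$- or $r$-power, and because $x \notin C$ while $C \leqslant \ce{G}{x}$ fails\,—\,wait, rather because $\rad{p}{G}\cap X \nleqslant \ce{G}{x}$, the contribution of $x$ to $\overline{G}$ lies in a Sylow $q$- or $r$-subgroup. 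The key point is that the non-central $p$-elements of $X$ that move $\rad{p}{G}\cap X$ have index a power of a single prime $t \in \{q,r\}$ with $t \ne p$ (the possibility $t = p$ is excluded precisely by the hypothesis $P \leqslant \ce{G}{\rad{p}{G}\cap X}$, since if some $p$-element of $X$ had $p$-power index moving $\rad{p}{G}\cap X$ that would contradict $P$ centralising it after using Lemma \ref{wielandt}/Lemma \ref{theoremPnotCent}). Then $\overline{G}$ embeds into $\op{Aut}$ of the action on $\rad{p}{G}\cap X$ in a way controlled by these indices, giving $|\overline{G}|$ a $t$-power; I would make this precise by showing $G/C$ is generated by the images of such $x$'s and that each has $t$-power order modulo $C$.

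Finally, abelianness of $\overline{G}$: this I would extract from Corollary \ref{lemma_p-decompoallp2} and Proposition \ref{lemmaPXabelian}. Indeed $G/\ce{G}{\rad{p}{G}}$ is $p$-decomposable with abelian $p$-complement by Corollary \ref{lemma_p-decompoallp2}, and $\ce{G}{\rad{p}{G}} \leqslant C$, so $\overline{G} = G/C$ is a quotient of $G/\ce{G}{\rad{p}{G}}$; since $\overline{G}$ is a $p'$-group it is a quotient of the abelian $p$-complement of $G/\ce{G}{\rad{p}{G}}$, hence abelian. That argument simultaneously gives both the single-prime and abelian conclusions if the $p$-complement of $G/\ce{G}{\rad{p}{G}}$ already has order divisible by at most the one prime $t$; otherwise I would refine by noting $C/\ce{G}{\rad{p}{G}}$ kills off the extra prime.

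The main obstacle I anticipate is the single-prime claim: ruling out that two different primes $q \ne r$ (coming from the two factors $A$ and $B$) both appear in $|G/C|$. The resolution should be that if $P\cap X = \rad{p}{G}\cap X$ and $P$ centralises it, then the primes are forced to coincide because the relevant $p$-elements all lie in one factor $X$ (the other factor's $p$-part is inside $\fit{G}$ and centralises $\rad{p}{G}\cap X$ by Proposition \ref{lemmaPXabelian}(a)), so only one of $q$, $r$ is in play — but I would need to handle carefully the symmetric subcase where both $P\cap A$ and $P\cap B$ are outside $\fit{G}$, which by Proposition \ref{lemmaPXabelian}(b) makes $P$ abelian and $P \leqslant \ce{G}{\rad{p}{G}}$, again degenerate.
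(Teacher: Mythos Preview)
Your abelianness argument is fine and matches the paper exactly: $C := \ce{G}{\rad{p}{G}\cap X}$ contains $\ce{G}{\rad{p}{G}}$, so $G/C$ is a quotient of $G/\ce{G}{\rad{p}{G}}$, which by Corollary \ref{lemma_p-decompoallp2} is $p$-decomposable with abelian $p'$-part; since $P\leqslant C$ the quotient is a $p'$-group, hence abelian.

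There are two genuine gaps, however.

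\textbf{Normality of $C$.} You claim that in the ``interesting case'' $P\cap X=\rad{p}{G}\cap X$ is a normal subgroup of $G$, and deduce normality of $C$ from that. But $\rad{p}{G}\cap X$ is only guaranteed to be normal in $X$, not in $G$; nothing in the hypotheses forces $X$-invariance to become $G$-invariance. The paper avoids this entirely: it works in $\widetilde{G}:=G/\ce{G}{\rad{p}{G}}=\rad{p}{\widetilde{G}}\times \widetilde{H}$ (with $\widetilde{H}$ abelian), notes that the hypothesis $P\leqslant C$ gives $P\ce{G}{\rad{p}{G}}\leqslant C$, and then uses the Dedekind law to identify $C/\ce{G}{\rad{p}{G}}=\rad{p}{\widetilde{G}}\times\widetilde{H_0}$, which is visibly normal in $\widetilde{G}$. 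Normality of $C$ in $G$ follows. This is not a cosmetic difference: without the passage through $\widetilde{G}$ you have no handle on normality.

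\textbf{The single prime.} Your plan looks at $p$-elements of $A\cup B$ and asks which primes occur as their indices, then tries to make $G/C$ be ``generated by the images of such $x$'s''. This is the wrong direction: the elements whose indices control $|G/C|$ are those of $\rad{p}{G}\cap X$, because for each such $x$ one has $C\leqslant \ce{G}{x}$ and hence $|\overline{G}:\overline{\ce{G}{x}}|=|G:\ce{G}{x}|$. By Proposition \ref{UniquePrime}(a) there is a single prime $q$ such that every $p$-element of $X$ has $q$-power index; moreover $q\neq p$, since otherwise some $x\in(\rad{p}{G}\cap X)\smallsetminus\ze{G}$ would have $P\nleqslant\ce{G}{x}$, contradicting $P\leqslant C$. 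Now if a prime $r\neq q$ divided $|\overline{G}|$, the (unique, since $\overline{G}$ is abelian) Sylow $r$-subgroup $\overline{R}$ would lie in every $\overline{\ce{G}{x}}$, so $R\leqslant\bigcap_{x\in\rad{p}{G}\cap X}\ce{G}{x}=C$, a contradiction. That is the whole argument; there is no need to worry about the prime $r$ coming from the other factor $Y$, and the obstacle you anticipate in your last paragraph does not arise.
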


\begin{proof}
We denote $\widetilde{G} := G/\ce{G}{\rad{p}{G}}$ and we have $\widetilde{G} =  \rad{p}{\widetilde{G}} \times \widetilde{H}$ by Corollary \ref{lemma_p-decompoallp2}, where $\widetilde{H}\in\op{Hall}_{p'}(\widetilde{G})$ is abelian for any $H\in\hall{p'}{G}$. Since $P\ce{G}{\rad{p}{G}}\leqslant \ce{G}{\rad{p}{G}\cap X}$, we get by the Dedekind law $$\ce{G}{\rad{p}{G} \cap X}/ \ce{G}{\rad{p}{G}} = \rad{p}{\widetilde{G}} \times \widetilde{H_0} \unlhd \rad{p}{\widetilde{G}} \times \widetilde{H} = \widetilde{G},$$ where $H_0 := H\cap \ce{G}{\rad{p}{G} \cap X}$, and thus $\ce{G}{\rad{p}{G} \cap X}$ is normal in $G$. 

Set $\overline{G}:= G/\ce{G}{\rad{p}{G} \cap X}$. Then $\overline{G}$ is a $p'$-group, and since it is isomorphic to a quotient of $\widetilde{G}$, it is abelian.  We may affirm that there exists an element $x\in(\rad{p}{G}\cap X)\smallsetminus \ze{G}$. Then $i_G(x)$ is a $q$-number for some prime $q\neq p$ (actually, this holds for every element in $\rad{p}{G}\cap X$). Moreover, $|\overline{G}: \overline{\ce{G}{x}}| = \abs{G:\ce{G}{x}}$ so $q$ divides $|\overline{G}|$.  Let suppose that there exists another prime $r\neq q$ such that $r$ divides $|\overline{G}|$. Since $|\overline{G}: \overline{\ce{G}{x}}| = \abs{G:\ce{G}{x}}$, it follows that the unique Sylow $r$-subgroup $\overline{R}$ of $\overline{G}$ is contained in $\overline{\ce{G}{x}}$. Hence $R\leqslant\ce{G}{x}$ for every $x\in \rad{p}{G}\cap X$, so $R\leqslant \ce{G}{\rad{p}{G}\cap X}$, which contradicts that $r$ divides $|\overline{G}|$. 
\end{proof}

\medskip

This last proposition is not longer true for $p$-Baer factorisations, as Final examples (1) shows. The next result is the last step to prove Theorem \ref{teoallp}.

\begin{proposition}
\label{theoremPAnotFitting}
Let $G=AB$ be a Baer factorisation, and let $P=(P\cap A)(P\cap B)\in\syl{p}{G}$. Let assume that $P\cap X\nleqslant \fit{G}$ for some $X\in\{A, B\}$. Then:

(a) $P\cap X\leqslant \ze{P}$.
	
(b) There exists a unique prime $q\neq p$ such that $P\cap X\nleqslant \ce{G}{\rad{q}{G}}$.
	
(c) $\abs{G:\ce{G}{P\cap X}}$ is a power of the prime $q$ in statement (b).

\noindent If, moreover, $P\cap Y\nleqslant \fit{G}$ where $\{X, Y\}=\{A, B\}$, then:

(d) $P$ is abelian and $\abs{G:\ce{G}{P}}$ is a $\{q, r\}$-number, with $p\notin \{q, r\}$, $q$ is the prime in (b), and $r$ is the unique prime such that $P\cap Y \nleqslant\ce{G}{\rad{r}{G}}$. (Eventually $q=r$.)

\end{proposition}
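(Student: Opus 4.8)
The plan is to build on the machinery of Section \ref{sec_p}, specialised to the Baer situation where \emph{every} prime power order element of $A \cup B$ has prime power index. Throughout I fix $P = (P\cap A)(P\cap B)\in\syl{p}{G}$ with $P\cap A\in\syl{p}{A}$, $P\cap B\in\syl{p}{B}$, as guaranteed by Lemma \ref{prefact_sylow}. The hypothesis $P\cap X\nleqslant\fit{G}$ puts us exactly in the regime of Proposition \ref{lemmaPXabelian}(a): it gives at once that $P\cap X\leqslant\ce{G}{\rad{p}{G}}$, that $P\cap X$ is abelian, and that $[P\cap A, P\cap B]=1$. The last two facts together say $P\cap X$ centralises both $P\cap A$ and $P\cap B$, hence $P\cap X\leqslant\ze{P}$, which is statement (a).

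For (b) and (c), I would first note that since $P\cap X$ is abelian, by the reasoning in Proposition \ref{lemmaPXabelian}(a) (applied with the Baer hypothesis) every element of $(P\cap X)\smallsetminus\ze{G}$ has index a power of a prime $\neq p$; the point is to show this prime is \emph{the same} prime $q$ for all such elements. This is where Lemma \ref{lemmaBK} enters: if two non-central elements $x_1, x_2\in P\cap X$ had indices that were powers of distinct primes, then $x_1 x_2\in P\cap X$ would also be a $p$-element of prime power index, and Lemma \ref{lemmaBK} would force a Sylow $p$-subgroup of $G$ to be non-abelian with $\langle x_1, x_2\rangle^G\leqslant\rad{p}{G}$ — but $P\cap X\leqslant\ce{G}{\rad{p}{G}}$ means $x_1, x_2$ centralise $\rad{p}{G}$, and one pushes this through exactly as in the proof of Proposition \ref{UniquePrime}(a) (invoking Lemma \ref{theoremPnotCent} to conclude one of $x_1, x_2$ is central, a contradiction). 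So there is a unique prime $q\neq p$ with $i_G(x)$ a $q$-power for every $x\in(P\cap X)\smallsetminus\ze{G}$; this $q$ is also the unique prime with $P\cap X\nleqslant\ce{G}{\rad{q}{G}}$, since $\rad{q}{G}\leqslant\ce{G}{P\cap X}$ would follow from $P\rad{q'}{G}$-type normality arguments for every prime $\neq q$ (using Lemma \ref{p-solv}(a) and that $P\cap X\leqslant\ce{G}{\fit{G}_{q'}}$ by Proposition \ref{UniquePrime}(b)). For (c), since $P\cap X$ is abelian with all non-central elements of $q$-power index, the standard argument — $\ce{G}{P\cap X}=\bigcap_{x\in P\cap X}\ce{G}{x}$, and each $\ce{G}{x}$ has $q$-power index — gives $\abs{G:\ce{G}{P\cap X}}$ a $q$-power.

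For (d), assume also $P\cap Y\nleqslant\fit{G}$. Then $P=(P\cap X)(P\cap Y)$ with both factors abelian and $[P\cap X, P\cap Y]=1$, so $P$ is abelian (this is Proposition \ref{lemmaPXabelian}(b)). Applying (b)--(c) to $Y$ gives the unique prime $r\neq p$ with $P\cap Y\nleqslant\ce{G}{\rad{r}{G}}$ and $\abs{G:\ce{G}{P\cap Y}}$ an $r$-power. Then $\ce{G}{P}=\ce{G}{P\cap X}\cap\ce{G}{P\cap Y}$, so $\abs{G:\ce{G}{P}}$ divides $\abs{G:\ce{G}{P\cap X}}\cdot\abs{G:\ce{G}{P\cap Y}}$, a $\{q,r\}$-number, with $p\notin\{q,r\}$ by construction. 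I expect the main obstacle to be the uniqueness-of-$q$ step in (b): one must carefully verify that Lemma \ref{lemmaBK}'s conclusion (forcing a non-abelian Sylow $p$-subgroup and $\langle x_1,x_2\rangle^G\leqslant\rad{p}{G}$) genuinely conflicts with $P\cap X\leqslant\ce{G}{\rad{p}{G}}$ together with Lemma \ref{theoremPnotCent}, rather than just being "morally" a contradiction — the cleanest route is to mimic the proof of Proposition \ref{UniquePrime}(a) almost verbatim, replacing "$p$-Baer" inputs by the stronger "Baer" hypothesis only where the elements involved are not $p$-elements.
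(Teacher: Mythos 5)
Parts (a) and (b) are essentially sound and follow the paper's route (for (b) you could simply quote Proposition \ref{UniquePrime}(a) for the uniqueness of the prime attached to the $p$-elements of $X$, rather than re-running the Lemma \ref{lemmaBK} argument; you should also verify explicitly that $P\cap X\nleqslant\ce{G}{\rad{q}{G}}$ actually occurs, which follows because otherwise $P\cap X\leqslant\ce{G}{\fit{G}}\leqslant\fit{G}$). The proof of (c), however, has a genuine gap. The ``standard argument'' you invoke --- that $\ce{G}{P\cap X}=\bigcap_{x\in P\cap X}\ce{G}{x}$ has $q$-power index because each $\ce{G}{x}$ does --- is false: an intersection of subgroups of $q$-power index need not have $q$-power index, since each $\ce{G}{x}$ contains \emph{some} Hall $q'$-subgroup of $G$ but possibly a different conjugate for each $x$. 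The paper's Final examples (3) is precisely a counterexample to this reasoning in the $p$-Baer setting: there every element of $P\cap G$ has index in $T$ a power of a single prime, yet $\abs{T:\ce{T}{P\cap G}}$ is divisible by both $3$ and $7$. So (c) genuinely requires the full Baer hypothesis, which your argument for (c) never uses. The paper's proof instead manufactures a \emph{single} $q$-complement centralising all of $P\cap X$: it takes $T\in\hall{\{p,q\}'}{G}$ with $PT$ a $q$-complement, uses that $G/\fit{G}$ is abelian (Corollary \ref{teoNA}(1), which needs the Baer hypothesis at all primes) to get $L=P\rad{q}{G}T\unlhd G$, and then runs a covering argument over $K=(P\cap X)\rad{q}{G}$ to force $[P\cap X,T]=1$; combined with $P\cap X\leqslant\ze{P}$ from (a), this yields the $q$-power index.

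The same flaw propagates to (d): the claim that $\abs{G:\ce{G}{P\cap X}\cap\ce{G}{P\cap Y}}$ divides $\abs{G:\ce{G}{P\cap X}}\cdot\abs{G:\ce{G}{P\cap Y}}$ is false for arbitrary subgroups. It is correct when $q\neq r$, because coprime indices give $G=\ce{G}{P\cap X}\ce{G}{P\cap Y}$ and then the index of the intersection really is the product; but when $q=r$ you cannot conclude from your data that the intersection still has $q$-power index. The paper treats $q=r$ by a separate argument, showing directly (by the same covering device as in (c)) that a Hall $\{p,q\}'$-subgroup $T$ with $PT$ a $q$-complement centralises all of $P=(P\cap A)(P\cap B)$, so that $\abs{G:\ce{G}{P}}$ is a $q$-power. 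You need to supply arguments of this kind for both (c) and the $q=r$ case of (d).
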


\begin{proof}
(a) This is exactly Proposition \ref{lemmaPXabelian} (a).

(b) By (a), for every $x\in P\cap X$ it hold that $i_G(x)$ is a $q$-power, for a fixed prime $q\neq p$. Then $\rad{q'}{\fit{G}}\leqslant\ce{G}{P\cap X}$. Finally, $\rad{q}{G}\nleqslant\ce{G}{P\cap X}$ since otherwise $P\cap X\leqslant\ce{G}{\fit{G}}\leqslant\fit{G}$, a contradiction.

(c) Take $T\in\hall{\{p, q\}'}{G}$ such that $PT$ is a $q$-complement of $G$. As $G/\fit{G}$ is abelian by Proposition \ref{teoNA} (a), then $L:=P\rad{q}{G}T$ is normal in $G$. Consequently, for every $x\in P\cap X$ we obtain that $i_L(x)$ is a $q$-power, and there exists $g\in L$ such that $T^g$ centralises $x$,  (actually by (a) we may assume $g\in \rad{q}{G}$). Set $K:=(P\cap X)\rad{q}{G}$. Hence $K\subseteq \cup_{g\in K} \left(\ce{K}{T}\rad{q}{G}\right)^g\subseteq K.$ It follows that $K=\ce{K}{T}\rad{q}{G}$ and $[P\cap X, T]\leqslant [K, T]=[\ce{K}{T}\rad{q}{G}, T] = [\rad{q}{G}, T]\leqslant \rad{q}{G}$. But $[P\cap X, T]\leqslant PT$, which is a $q'$-group. Then $[P\cap X, T]=1$, and $P$ also centralises $P\cap X$ by (a). The claim is now proved.

(d) $P$ is abelian by (a). Moreover, by (c), $\abs{G:\ce{G}{P\cap A}}$ is a $q$-number and $\abs{G:\ce{G}{P\cap B}}$ is an $r$-number for some primes $q$ and $r$. Note that $\ce{G}{P}=\ce{G}{P\cap A}\cap \ce{G}{P\cap B}$. If $q\neq r$, then $G=\ce{G}{P\cap A}\ce{G}{P\cap B}$, and $\abs{G:\ce{G}{P}}=\abs{G:\ce{G}{P\cap A}}\cdot\abs{G:\ce{G}{P\cap B}}$ is a $\{q, r\}$-number. Therefore we may assume that $q=r$, and so every $p$-element in $A\cup B$ has $q$-power index. Arguing analogously as in (c), for a $T\in\hall{\{p, q\}'}{G}$ such that $PT$ is a $q$-complement, we deduce that $P\rad{q}{G} = (P\cap A)(P\cap B)\rad{q}{G} \leqslant \ce{G}{T}\rad{q}{G}$, and $[P, T]\leqslant PT\cap \rad{q}{G}=1$. Then $T$ centralises $P$, and $P$ is abelian, so $\abs{G:\ce{G}{P}}$ is a $q$-power. The result is now established. 
\end{proof}

\medskip

\begin{proof}[Proof of Theorem \ref{teoallp}]
(1)  Let $P=(P\cap A)(P\cap B)\in\syl{p}{G}$ non-abelian. If $P=\rad{p}{G}$, then either $P\nleqslant \ce{G}{\rad{p}{G}\cap A}$ or $P\nleqslant \ce{G}{\rad{p}{G}\cap B}$. Assume for instance that $P\nleqslant \ce{G}{\rad{p}{G}\cap A}$. Then by Lemma \ref{theoremPnotCent}, we get that $\abs{G:\ce{G}{P\cap A}}$ is a $p$-number. Moreover, if $P\nleqslant \ce{G}{\rad{p}{G}\cap B}$, then $\abs{G:\ce{G}{P\cap B}}$ is also a $p$-number, and each element in $(P\cap A)\cup(P\cap B)$ has index a $p$-number. Therefore $G$ is $p$-decomposable by virtue of Lemma \ref{lemma_p-decompo(1)}, and $\abs{G:\ce{G}{P}}$ is clearly a $p$-power. On the other hand, if $P\leqslant \ce{G}{\rad{p}{G}\cap B}$, Proposition \ref{theoremPCent} yields that $\ce{G}{\rad{p}{G} \cap B}$ is normal in $G$ and the quotient $G/\ce{G}{\rad{p}{G} \cap B}$ is an abelian $q$-group ($q\neq p$). We deduce $G=\ce{G}{P\cap A}\ce{G}{P\cap B}$, and since $\ce{G}{P}=\ce{G}{P\cap A}\cap \ce{G}{P\cap B}$, then $\abs{G:\ce{G}{P}}=\abs{G:\ce{G}{P\cap A}}\cdot\abs{G:\ce{G}{P\cap B}}$ is a $\{p, q\}$-number. 

Now we assume $P\nleqslant \fit{G}$. It cannot happen that both $P\cap A\nleqslant\fit{G}$ and $P\cap B\nleqslant\fit{G}$ by Proposition \ref{theoremPAnotFitting} (a). We may assume for instance that $P\cap A\nleqslant \fit{G}$ and $P\cap B\leqslant\fit{G}$. By Proposition \ref{theoremPAnotFitting} (a) and (c), we get that $P\cap A\leqslant\ze{P}$ and $\abs{G:\ce{G}{P\cap A}}$ is a $q$-number, where $q\neq p$. On the other hand, as $P$ is not abelian, $P\nleqslant \ce{G}{P\cap B}$, so Lemma \ref{theoremPnotCent} yields $\abs{G:\ce{G}{P\cap B}}$ is a $p$-number. Then $\abs{G:\ce{G}{P}}$ is a $\{p, q\}$-number.

(2) Consider that $P=(P\cap A)(P\cap B)\in\syl{p}{G}$ is abelian. If $P\cap A\nleqslant\fit{G}$ and $P\cap B\nleqslant\fit{G}$ then the claim follows from Proposition \ref{theoremPAnotFitting} (d). Assume that $P\cap A\nleqslant\fit{G}$ and $P\cap B\leqslant\fit{G}$, so $\abs{G:\ce{G}{P\cap A}}$ is a $q$-number with $q\neq p$ by Proposition \ref{theoremPAnotFitting} (c). Since $P\leqslant\ce{G}{\rad{p}{G}\cap B}$, we deduce from Proposition \ref{theoremPCent} that $\ce{G}{P\cap B}$ is normal in $G$ with index an $r$-number ($r\neq p$). If $q\neq r$, then $G=\ce{G}{P\cap A}\ce{G}{P\cap B}$ and the claim follows. If $q=r$, we obtain that $\abs{\ce{G}{P\cap A}:\ce{G}{P}} = \abs{\ce{G}{P\cap A}\ce{G}{P\cap B}:\ce{G}{P\cap B}}$ divides the $q$-number $\abs{G:\ce{G}{P\cap B}}$. Hence the index $\abs{G:\ce{G}{P}}$ is a $q$-power.

Now suppose $P=\rad{p}{G}$. Note that, by Proposition \ref{theoremPCent}, both $\ce{G}{\rad{p}{G}\cap A}$ and $\ce{G}{\rad{p}{G}\cap B}$ are normal in $G$ with indices a $q$-number and an $r$-number, respectively. The case $q\neq r$ is again clear. If $q=r$, the above reasoning on the index of $\abs{\ce{G}{P\cap A}:\ce{G}{P}}$ shows that $\abs{G:\ce{G}{P}}$ is a $q$-power.  Finally, from (1), (2) and Corollary \ref{lemma_p-decompoallp2} we conclude we last assertion of the theorem.
\end{proof}

\medskip

Now we end by proving Theorem \ref{corindices}, as a consequence of the previous result.

\medskip

\begin{proof}[Proof of Theorem \ref{corindices}]
The converse direction is trivial. Assume that $G=AB$ is a Baer factorisation, and let $P=(P\cap A)(P\cap B)\in\syl{p}{G}$ such that $P\cap A\in\syl{p}{A}$ and $P\cap B\in\syl{p}{B}$. Let $X\in\{A, B\}$. We claim that $\abs{G:\ce{G}{P\cap X}}$ is a prime power. Now we distinguish two cases: either $P\cap X\nleqslant \fit{G}$ or $P\cap X=\rad{p}{G}\cap X$. In the first case, the claim follows from Proposition \ref{theoremPAnotFitting} (c). In the second case, if $P\leqslant \ce{G}{\rad{p}{G}\cap X}$, then we apply Proposition \ref{theoremPCent}, and if $P\nleqslant \ce{G}{\rad{p}{G}\cap X}$, then Lemma \ref{theoremPnotCent} follows. 
\end{proof}

\begin{fexam}
Some of the results stated in this section fail when the hypotheses are weakened to $p$-Baer factorisations, as the following examples show:

(1) Proposition \ref{theoremPCent} and Theorem \ref{corindices} do not hold for $p$-Baer factorisations: Let $G=AB$ be the semidirect product of a non-abelian group $B$ of order 21 acting on an elementary abelian group $A$ of order 8, in such a way that the subgroup of order 7 permutes the involutions transitively (this group appears in \cite{CSS}). Then $i_G(g)=7$ for every $2$-element $g\in G$ (i.e., $G$ is $2$-Baer), and the unique abelian Sylow $2$-subgroup $P$ of $G$ is equal to $A$, but $\abs{G:\ce{G}{P}}=\abs{G:\ce{G}{\rad{2}{G}\cap A}}=21$.
	
(2) Proposition \ref{inheritstructure} does not hold either for $p$-Baer factorisations: Let $G$ be the group in (1). Then there is a subgroup $H$ of $G$ of order $24$ and a subgroup $K$ of order $7$ such that $G=HK$ is also a $2$-Baer factorisation, and there exists a $2$-element $x\in H$ such that $i_H(x)=3$, which clearly is not a $7$-number. 
	
(3) Proposition \ref{theoremPAnotFitting} (c) is neither true for $p$-Baer factorisations: Let now $Q$ be a cyclic group of order 7. Consider the regular wreath product $T=Q \op{wr} G$ with $G$ the group in (1), and denote by $Q^{\natural}$ the basis group (we point out that this group appears in \cite{BK}). Then the factorisation $T=Q^{\natural}G$ is $2$-Baer. Let $P$ be the Sylow $2$-subgroup of $G$, so $P\in\syl{2}{T}$. Then $P=P\cap G\nleqslant\fit{T}$, but the index $\abs{T:\ce{T}{P\cap G}}$ is divisible by $3$ and $7$.
\end{fexam}

%%%%%%%%%%%%%%%%%%%%%%%%%%%%%%%%%%%%%%%%%%%%%%%%%%%%%%%%%%%%%5

\end{document}